\numberwithin{equation}{section}
\newcommand{\red}[1]{#1}
\definecolor{purple}{rgb}{0.9,0,0.8}
\newcommand{\purple}[1]{#1}
\definecolor{gray}{rgb}{0.7,0.7,0.7}
\newcommand{\abbr}[1]{{\sc\lowercase{#1}}}
\global\long\def\Es{E_{\star}}%
\global\long\def\GS{\mathsf{GS}}%
\global\long\def\V{{\rm Vol}}%
\global\long\def\bsigma{\boldsymbol{\sigma}}%
\global\long\def\bx{\mathbf{x}}%
\global\long\def\be{\mathbf{e}}%
\global\long\def\bxs{\mathbf{x}_{\star}}%
\global\long\def\bxo{\mathbf{x}_0}
\global\long\def\hbxs{\hat{\mathbf{x}}_{\star}}
\global\long\def\qs{q_{\star}}%
\global\long\def\bz{\mathbf{z}}%
\global\long\def\by{\mathbf{y}}%
\global\long\def\sfv{{\mathsf{v}}}%
\global\long\def\bv{\mathbf{v}}%
\global\long\def\bu{\mathbf{u}}%
\global\long\def\bn{\mathbf{n}}%
\global\long\def\bJ{\mathbf{J}}%
\global\long\def\indic{\mathbb{I}}%
\global\long\def\SN{\mathbb{S}^{N-1}}%
\global\long\def\sp{{\rm sp}}%
\global\long\def\E{\mathbb{E}}%
\global\long\def\P{\mathbb{P}}%
\global\long\def\R{\mathbb{R}}%
\global\long\def\V{\mathbb{V}}%
\global\long\def\cpt{\mathsf{CP}}%
\global\long\def\cptq{\mathsf{CP}_{\qs}}%
\global\long\def\cpto{\cpt_{0}}
\global\long\def\cpteq{\cpt_{\qs}^{\rm e}}
\global\long\def\cpteqm{\cpt_{\qs}^{{\rm e},[m]}}
\global\long\def\cpteo{\cpt_{0}^{\rm e}}
\global\long\def\epsilon{\varepsilon} 
\global\long\def\b{\beta}%
\global\long\def\BJ{\mathbf{J}}%
\def\cpt{\mathsf{CP}}
\def\err{{\sf Err}}
\def\fdt{{\rm fdt}}
\def\b{{\beta}}
\def\SN{{\mathbb S}_N}
\def\qs{q_\star}
\def\qdyn{q_{\sf d}}
\def\SNqs{\qs \SN}
\def\bJ{{\bf J}}
\def\R{{\mathbb R}}
\def\P{{\mathbb P}}
\def\E{{\mathbb E}}
\def\BO{{\bf O}}
\def\BB{{\bf B}}
\def\BZ{{\bf Z}}
\def\BB{{\bf B}}
\def\BZ{{\bf Z}}
\def\BJ{{\bf J}}
\def\BJ{{\bf J}}
\def\bsigma{{\boldsymbol \sigma}}
\def\Bx{{\bf x}}
\def\bx{{\bf x}}
\def\by{{\bf y}}
\def\bx{\Bx}
\def\bn{\Bx_\star}
\def\Es{{E_\star}}
\def\Gs{{G_\star}}
\def\qinf{\alpha \qs}
\def\Aa{{\mathcal A}}
\def\Ca{{\mathcal C}}
\def\Ea{{\mathcal E}}
\def\Fa{{\mathcal F}}
\def\Ha{{\mathcal H}}
\def\La{{\mathcal L}}
\def\Pa{{\mathcal P}}
\def\Ua{{\mathcal U}}
\def\b{{\beta}}
\def\bJ{{\bf J}}
\def\R{{\Bbb R}}
\def\B{{\Bbb B}}
\def\P{{\Bbb P}}
\def\E{{\Bbb E}}
\def\BB{{\bf B}}
\def\BZ{{\bf Z}}
\def\BB{{\bf B}}
\def\BW{{\bf W}}
\def\BZ{{\bf Z}}
\def\BJ{{\bf J}}
\def\BG{{\bf G}}
\def\BJ{{\bf J}}
\def\Bx{{\bf x}}
\def\bx{{\bf x}}
\def\bv{{\mathsf v}}
\def\by{{\bf y}}
\def\bx{\Bx}
\def\Aa{{\mathcal A}}
\def\Ca{{\mathcal C}}
\def\Ea{{\mathcal E}}
\def\Fa{{\mathcal F}}
\def\Ha{{\mathcal H}}
\def\La{{\mathcal L}}
\def\Pa{{\mathcal P}}
\def\Ua{{\mathcal U}}
\def\C{{\Bbb C}}
\def\R{{\Bbb R}}
\def\E{{\Bbb E}}
\def\Var{{\rm Var}}
\def\Cov{{\rm Cov}}
\def\b{\beta}
\def\D{\Delta}
\def\part{\partial}
\def\ts{\times}
\def\tilde{\widetilde}
\newtheorem{prop}{Proposition}[section]
\newtheorem{remark}[prop]{Remark}
\newtheorem{lem}[prop]{Lemma}
\newtheorem{defi}[prop]{Definition}
\newtheorem{cor}[prop]{Corollary}
\newtheorem{theo}[prop]{Theorem}
\def\HJs{H^{\star}_{\BJ}}
\def\HJsm{H^{\star,[m]}_{\BJ}}
\def\bU{{\sf U}}
\def\C{{\Bbb C}}
\def\R{{\Bbb R}}
\def\E{{\Bbb E}}
\def\Var{{\rm Var}}
\def\b{\beta}
\def\V{\vec V}
\def\Ve{{\vec V}_{\rm e}}
\def\vB{\vec B}
\def\Eg{E_\star}
\def\Egp{E'_\star}
\def\Gg{G_\star}
\def\Ggp{G'_\star}
\def\Ep{E}
\def\Egs{\Eg}
\def\Ggs{\Gg}
\def\Epp{E'}
\def\qo{q_o}
\def\qop{q'_o}
\def\bUqf{\bU^f_{\qs}}
\def\bUqtf{\bU^{\widetilde{f}}_{\qs}}
\def\bUof{\bU^f_{0}}
\def\bUsp{\bU^{\rm sp}_{\qs}}
\def\bUosp{\bU^{\rm sp}_{0}}
\def\D{\Delta}
\def\part{\partial}
\def\ts{\times}
\def\lak{\langle k \rangle}
\def\tilde{\widetilde}
\newcommand{\HN}{\widetilde{H}_N}
\newcommand{\phiN}{\widetilde{\varphi}_N}
\begin{document}              

\title[Spherical spin glass dynamics]
{Dynamics for spherical spin glasses: \\
Gibbs distributed initial conditions}
\author{Amir Dembo}
\address{Department of Statistics and Department of Mathematics\\
Stanford University\\ Stanford, CA 94305.}
\email{adembo@stanford.edu}

\author{Eliran Subag}
\address{Mathematics Department, Weizmann Institute of Science\\
Herzl St 234,  PO Box 26,  Rehovot 7610001,  Israel}
\email{eliran.subag@gmail.com}

\thanks{
\newline
{\bf AMS (2020) Subject Classification:}
Primary: 82C44 Secondary:  82C31, 60H10, 60K35.
\newline
{\bf Keywords:} Langevin dynamics,  Gibbs measures,  Spin glass models.}

\date{July 8,  2026}

\begin{abstract}
We derive the coupled non-linear integro-differential equations for 
the thermodynamic limit of the empirical correlation and response functions 
in the Langevin dynamics at temperature $T$,  for spherical mixed $p$-spin 
disordered mean-field models,  \emph{initialized according to a Gibbs measure
for temperature $T_0$},  in the replica-symmetric (\abbr{rs}) or $1$-replica-symmetry-breaking (\abbr{rsb}) 
phase.  For any $T_0=T$ above the phase transition point,  the resulting stationary relaxation dynamics 
coincide with the \abbr{FDT}-solution for these equations,  while for lower
$T_0=T$ in the $1$-\abbr{rsb} phase,  the relaxation dynamics coincides with the 
\abbr{FDT}-solution,  now concentrated on the single spherical band within the Gibbs measure's
support on which the initial point lies.  
\end{abstract}

\maketitle
\section{Introduction\label{s.introduction}}

The thermodynamic limits of a wide class of Markovian dynamics with random interactions,
exhibit complex long time behavior, which is of much interest in out of equilibrium statistical physics 
(c.f. the surveys 
\cite{BB,BKM,LesHouches} and the references therein). 
This work is about the thermodynamic ($N \to \infty$),  behavior 
at times $t$ which \emph{do not grow with $N$},  for 
Langevin particles $\bx_t=(x_t^i)_{1\le i\le N}\in\R^N$, 
interacting with each other through random \red{mixed $p$-spin} 
Hamiltonians $H_{\BJ}$.  Specifically,  \red{fix} $1 < r_\star < \bar r \le \infty$ \red{and the corresponding} open balls
\[
\B^N_\star := \B^N(r_\star \sqrt{N}) \quad \mbox{and} \quad  \B^N := \B^N (\bar r \sqrt{N}) \subseteq \R^N
\]
of radii \red{$r_\star \sqrt{N}$ and} $\bar r \sqrt{N}$ (so $\B^N=\R^N$ if $\bar r =\infty$).  \red{To any} $b_p \ge 0$
decaying fast enough so that   
\begin{equation}\label{eq:r-star}
\limsup_{p \to \infty} \frac{1}{p} \log b_p \le  - \log \bar r \,,
\end{equation}
\red{correspond the mixed $p$-spin} centered Gaussian fields $H_\BJ:\B^N \to \R$ of positive definite covariance
\begin{equation}\label{eq:nudef}
\Cov\big(H_{\BJ}(\bx),H_{\BJ}(\by)\big) 
= N \nu \big( N^{-1} \langle \bx,\by \rangle \big)\,, \qquad
\nu(r):=\sum_{p \ge 2} b_p^2 r^p \,.
\end{equation}
We note in passing that the case of $\nu(t)=t^p$ is called \emph{pure $p$-spin} and more generally $\bar r=\infty$ 
for any \emph{finite mixture} (that is,  whenever $\nu(\cdot)$ is a polynomial function),  while 
a mixture $\nu(\cdot)$ is called \emph{generic} if 
	\begin{equation*}
		\sum_{p\,\text{odd}}p^{-1} \indic \{b_{p}>0\}=\sum_{p\,\text{even}}p^{-1} \indic \{b_p>0\}=\infty \,.
	\end{equation*}
\red{Having} $\nu(\cdot)$ real analytic on $(-\bar r^2,\bar r^2)$,  
\red{the field $H_\BJ$ of covariance \eqref{eq:nudef} 
is in} the collection $C^2_0(\B^N)$ of twice continuous differentiable
$\varphi_N:\B^N \to \R$,  such that $\varphi_N({\bf 0})=0$ and $\nabla \varphi_N({\bf 0})={\bf 0}$,
\red{with $H_\BJ$ further realizable as}
\begin{equation}\label{potential}
  H_\BJ(\bx)=\sum_{p\ge 2} b_p H^{(p)}_{\BJ} (\bx) \,, 
 \qquad
H^{(p)}_{\BJ} (\bx) := \sum_ {1\le i_1 \le  \cdots \le i_p\le N} J_{(i_1,\ldots, i_p)}x^{i_1}\cdots x^{i_p},
\end{equation}
for independent centered Gaussian coupling constants\footnote{coinciding with \cite[(1.3)-(1.4)]{DS21}, 
where all coupling constants for each unordered $\{i_1,\ldots,i_p\}$ have been lumped together}
$\BJ=\{J_{(i_1,\ldots, i_p)}\}$,  such that 
$\Var(J_{(i_1,\ldots, i_p)}) = N^{-(p-1)}$.
\red{Now,  fixing $\b>0$} and $f'(\cdot)$ locally Lipschitz on $[0,r_\star^2)$,  such that 
\begin{equation}
f(r) := f_\ell(r) := \ell(r-1)^2 + f_0(r)  \,,  \qquad \qquad
\label{eq:fdef}
\lim_{r \uparrow r^2_\star} \big\{ (r^2_\star - r) \,  f_0'(r)  \big\} = \infty \,,
\end{equation}  
consider for $N$-dimensional Brownian motion $\BB_t$,  independent of $H_{\BJ}$
\red{and} 
$\bxo \in \B^N_\star$, 
the 
solution $\{ \bx_t, t \ge 0 \}$ of 
\begin{equation}\label{diffusion}
 \bx_t=\bxo-\int_0^t f'(\vert\vert \bx_u\vert\vert^2/N)\bx_u du
  -\beta \int_0^t \nabla H_\BJ(\bx_u) du + \BB_t,
  \end{equation}
where $\vert \vert \cdot \vert \vert$ denotes the Euclidean norm on $\R^N$.  
\red{As} specified in Corollary \ref{cor:ex},  assuming \red{hereafter} \eqref{eq:r-star} and
\eqref{eq:fdef} guarantees the existence of unique strong solutions of \eqref{diffusion}
in $\Ca (\R^+,\R^N)$,  that for a.e.  path $t \mapsto \Bx_t$ are confined to $\B^N_\star$,  
with the dynamics 
then reversible \abbr{wrt} $\mu^N_{2\b,\BJ}$.  Here $\mu^N_{\b,\BJ}$ denotes the 
random Gibbs measure of density 
\begin{equation}\label{Gibbs}
\frac{d \mu^N_{\b,\BJ}}{d\Bx} = Z_{\b,\BJ}^{-1}
e^{-\b H_{\BJ} (\Bx) - N f(N^{-1} \|\Bx\|^2)} 
\end{equation}
with respect to Lebesgue measure on $\B^N_\star$ and 
\begin{equation}\label{def:ZbeJ}
Z_{\b,\BJ} := \int_{\B^N_\star} 
e^{-\b H_{\BJ} (\Bx) -N f(N^{-1} \|\Bx\|^2)} d\Bx < \infty \,.
\end{equation}
Further,  $e^{-N f_\ell(r)}$ approximates at $\ell \gg 1$
the indicator on $r=1$,  
effectively restricting $\{ \bx_t,  t >  0\}$ to the sphere 
$\SN := {\mathbb S}^{N-1}({\sqrt{N}})$ of radius  $\sqrt{N}$ with the 
corresponding  spherical mixed $p$-spin Gibbs measures 
of density 
\begin{equation}\label{eq:Gibbs-sph}
\frac{d\tilde{\mu}^N_{\b,\BJ}}{d\Bx} = \tilde{Z}_{\b,\BJ}^{-1} e^{-\b H_{\BJ}(\bx)} \,,
\end{equation}
with respect to the uniform measure on $\SN$.  The spherical mixed $p$-spin model \eqref{eq:Gibbs-sph} 
has been
extensively studied in mathematics and physics over
the last three decades.  See for example
\cite{Chen,Talagrand}, for the rigorous analysis of the asymptotic of the (non-random) limiting quenched free energy
\[
F(\b) := \lim_{N \to \infty} \frac{1}{N} \log \tilde{Z}_{2\b,\BJ} \,.
\]
In particular,  recall from \cite{Talagrand},  that $F(\b)=2 \b^2 \nu(1)$   
matching the annealed free-energy for $\tilde{\mu}^N_{2\b,\BJ}$ 
as long as $\b \le \b_c^{\rm stat}$, for the positive and finite
\begin{equation}\label{def:bc-stat}
\b_c^{\rm stat} := \inf \{ \b \ge 0 : \sup_{x \in [0,1]} \{ g_\b(x) \} > 0 \} \,,
\end{equation}
where (see also \cite[(1.13)]{SubagRS}),
\begin{equation}\label{def:g-beta}
g_\b(x) := g_\b(x;\nu) := 2 \b^2 \nu(x) + \frac{x}{2} + \frac{1}{2} \log (1-x) \,.
\end{equation}
At $\b > \b_c^{\rm stat}$ one has the Parisi formula $F(\b) = \min_\zeta \Pa_{\nu,\b}(\zeta)$,
where the minimum of the strictly convex Chrisanti-Sommers \cite{CS92} functional $\Pa_{\nu,\b}(\cdot)$ 
is taken over all distribution functions $\zeta(\cdot)$ such that $\zeta(\hat q)=1$ for some $\hat q<1$.  The Parisi
measure $\zeta_P$ is the unique minimizer of this functional,  with the maximal point of its support denoted by 
$q_{\rm EA}(\b) \in [0,1)$ (c.f. \cite[Prop. 2.1]{Talagrand}).  For any $\b \le \b_c^{\rm stat}$ the model $\nu(\cdot)$
is at the \abbr{rs} phase with $q_{\rm EA}(\b)=0$,  while for any $\b > \b_c^{\rm stat}$ and $k \ge 1$
it is at the k-\abbr{rsb} phase whenever 
\begin{equation}\label{eq:kRSB}
	\mbox{supp}(\zeta_P)= \{ 0=q_0<q_1<\cdots<q_k=q_{\rm EA}(\b) \} \,.
\end{equation}
In particular,  having $r \mapsto 1/\sqrt{\nu''(r)}$ convex on $[0,1]$,  guarantees 
the $1$-\abbr{rsb} phase for all $\b>\b_c^{\rm stat}$ (see \cite[Prop.  2.2]{Talagrand} or 
\cite[Corollary 1.6]{JT2}).  Recall 
\purple{\cite[(1.19)]{DS24}} that such $\nu(\cdot)$ is {\em strictly} $k$-\abbr{rsb} if $\{q_j,  0 \le j \le k\}$
are the only maximizers in $[0,q_{\rm EA}(\b)]$ of 
\begin{equation}\label{eq:phiP}
\phi_P(x) := 4 \b^2 \nu(x) - \int_0^x (x-u) \Big[\int_u^1 \zeta_P([0,r]) dr\Big]^{-2} du 
\end{equation}
(at $\b \le \b_c^{\rm stat}$ the model is replica symmetric,  with $\mbox{supp}(\zeta_P)=\{0\}$ 
and $\phi_P(x)=2 g_\b(x)$).

Large dimensional Langevin or Glauber dynamics often exhibit 
very different behavior at various time-scales (as functions 
of system size, c.f. \cite{ICM} and references therein). 
Following the physics literature (see
\cite{BKM,CHS,LesHouches,CK}), 
we study \eqref{diffusion}
at the shortest possible 
time-scale, where $N \to \infty$ first, holding $t \in [0,T]$.
Considering the hard spherical constraint,  \cite{CHS,CK} 
predicted a rich picture for the 
limiting dynamics when initialized at infinite temperature ($\b_0=0$), namely
for $\bxo$ distributed uniformly over $\SN$.
Such limiting dynamics involve the coupled 
integro-differential equations relating the non-random 
limits 
$C(s,t)$ and 
\begin{equation}\label{eq:Rdef}
\chi(s,t):=\int_0^t R(s,u) du \,,
\end{equation}
as $N \to \infty$,  of the empirical covariance and integrated response functions,  that is  
\begin{align}\label{empiricalcovariance}
C_N(s,t)&:=\frac{1}{N} \langle \bx_s, \bx_t \rangle =
\frac{1}{N} \sum_{i=1}^N x_s^i x_t^i,
\\
\label{integrated}
 {\chi}_N(s,t)&:= \frac{1}{N} \langle \bx_s,\BB_t \rangle
 = \frac{1}{N}\sum_{i=1}^N x_s^i B_t^i\,.
\end{align}
Specifically,  it is predicted that for $\b_0=0$ and large $\b$ the 
asymptotic of $C(s,t)$ strongly depends on the way 
$t$ and $s$ tend to infinity, exhibiting 
{\it aging} behavior (where the older it gets, the longer the system takes to forget its current state,
see e.g. \cite{CK,Alice}).  A detailed analysis of such aging properties is given in \cite{2001}
for pure $2$-spins  
(noting that $\{J_{\{ij\}}\}$ form the \abbr{GOE} random matrix, 
whose semi-circle limiting spectral measure determines the 
asymptotic of $C(s,t)$).  More generally,  
\cite[Thm.~1.2]{BDG2} provides a rigorous derivation of closed equations for $C$ and $R$ 
in case of finite mixtures,  with $f'_0(r)=c r^{2k}$ for $k$ large enough and 
$\bxo$ independent of $\BJ$ (subject to mild moments condition on $N^{-1} \| \bxo\|^2$ 
and the concentration of measure \cite[Hypothesis 1.1]{BDG2} for the law of $\bxo$).
Further,   \cite[Prop. 1.1]{DGM} shows that for $\nu(r)=\frac{1}{8} r^p$ and 
such $f_0$,  in the limit $\ell \to \infty$,  
the equations of \cite{BDG2} for $(C,R)$ and $f_\ell(\cdot)$ of \eqref{eq:fdef}
coincide 
with the \abbr{CKCHS}-equations of Cugliandolo-Kurchan \cite{CK}  
and Crisanti-Horner-Sommers \cite{CHS}.  
Denoting by ${\sf P}_{\bx^\perp}$ the projection matrix onto the orthogonal complement of $\bx$ and by
$\nabla_{\rm sp} H_\BJ(\bx)={\sf P}_{\bx^\perp} \nabla H_\BJ(\bx)$ the Riemannian gradient on $\SN$, 
these \abbr{CKCHS}-equations are for $\b_0=0$ (uniform) 
initialization of the Langevin dynamics 
on $\SN$, 
\begin{equation}\label{sphere-diffusion}
 \bx_t=\bxo-\beta \int_0^t \nabla_{\rm sp} H_\BJ(\bx_u) du
 - \frac{N-1}{2N} \int_0^t \bx_u du + \int_0^t {\sf P}_{\bx_u^\perp} d \BB_u,
  \end{equation}
which are reversible for the corresponding 
Gibbs measures $\tilde{\mu}^N_{2\b,\BJ}$ of \eqref{eq:Gibbs-sph}.

In this paper we derive 
the corresponding limiting equations as $N \to \infty$ followed by $\ell \to \infty$
for mixtures satisfying \eqref{eq:r-star} and $f_\ell$ as in \eqref{eq:fdef},  now \emph{starting the diffusion \eqref{diffusion}
at $\bxo$ drawn from the Gibbs measure} $\widetilde{\mu}^N_{2\b_0,\BJ}$ of \eqref{eq:Gibbs-sph},  
possibly with $\b_0 \ne \b$.  
\red{The convergence in probability of functionals of such  $\bxo$ is related
in \cite{DS24} to their convergence for a fixed,  non-random $\bxo$,  under suitable conditioning of 
the Gaussian field $H_{\BJ}$.  Specifically,  in} view of \purple{\cite[Thm. 1.2]{DS24}, } this requires us to condition on the 
value of $H_{\BJ}(\bxo)$,
whereas by \cite[Thm. 1.4]{DS24},  if a generic $\nu(\cdot)$ is strictly $k$-\abbr{rsb}
at $\beta_0 > \beta_c^{\rm stat}$,  one needs to further condition on the values of the potential 
and its radial derivative,  at $k$ critical points of $H_{\bJ}(\cdot)$ of certain specified overlaps
(among themselves and with $\bxo$).  For simplicity we restrict the latter case to $k=1$,  where one
conditions for $\qs \in (0,1]$,  $|\qo| \le \qs$,  $|\qo|<1$,  $\bxo \in \SN$,  $\bxs \in \qs \SN$ and  $\V:=(\Ep,\Eg,\Gg,\qo)$,
on events of the form
\begin{align}\label{eq:cond-J}
\cptq (\V) :=\Big\{H_{\BJ}(\bxo) = - N \Ep \,, H_{\BJ}(\bxs)=-N \Eg \,, \nabla H_{\BJ}(\bxs)= - \Gg \bxs \Big\},  
\quad \hbox{where} \quad
\frac{1}{N} \langle \bxo,\bxs \rangle = \qo \,.
\end{align}
\red{In \cite{DS21} we generalized \cite{BDG2},  to get the limiting equations for $(C_N,\chi_N)$ and
$\bxo$ uniformly chosen on a band
\begin{equation}
\label{eq:subsphere}
{\Bbb S}_{\bxs} (q) :=
\Big\{\Bx \in \SN : \; \frac{1}{N}\langle \Bx,\bxs \rangle=q \Big\}  \qquad {\rm for} \;\; {\rm some} \quad q \in [-\qs, \qs]\,,
\end{equation}
which is centered at some critical point $\bxs$ specified as in \eqref{eq:cond-J}.  While given $\cptq(\V)$,  the point 
$\bxo$ is no longer uniformly distributed on ${\Bbb S}_{\bxs} (\qo)$,  nevertheless,  as in \cite{DS21},  it still} suffices 
to keep track of $(C_N,\chi_N)$ and the dynamics of the angle with the critical point
\begin{equation}\label{eq:bnq} 
q_N (s) :=\frac{1}{N}\langle \Bx_s,\bxs \rangle = \frac{1}{N} 
\sum_{i=1}^N x_s^i x_\star^i \,,
\end{equation} 
which under \eqref{eq:cond-J} starts at $q_N(0)=\qo$.  
\purple{Note that for} $\b_0 \le \b_c^{\rm stat}$
the model is replica-symmetric (\abbr{rs}),  \purple{which} amounts to setting $\qs=0$ (or alternatively $\bxs={\bf 0}$).
\purple{At $\qs=0$},  necessarily $\Es=\qo=0$ \purple{with} $\Gs$ irrelevant,  \purple{whereby}
$\V=(E,0,0,0)$ and  \purple{we condition on the event}
\begin{align}\label{eq:cond-HT}
\cpto (\V) :=\Big\{H_{\BJ}(\bxo) = - N \Ep  \Big\} \,.
\end{align}

Before stating our results,  we note that using the replica method,  for pure spherical $p$-spin models,
\cite{BBM} predicts the resulting limit equations for $(R_N,C_N)$ when starting the Langevin diffusion 
on $\SN$ at a sample $\bxo$ from $\tilde{\mu}^N_{2\b_0,\BJ}$,  for $\b_0 \in (\b_c^{\rm dyn},\b_c^{\rm stat})$
and possibly $\b \ne \b_0$ (see \eqref{def:bc-dyn} for the definition of $\b_c^{\rm dyn}$).
Building on it,  and using again the replica method,  \cite{BF} considers in this setting 
also the limit time-dynamics of the energy 
\begin{equation}\label{eq:Hn}
\HN (s) := - \frac{1}{N} H_{\BJ}(\Bx_s) \,,
\end{equation}
and of $q_N(s)$ of the form \eqref{eq:bnq},
taking as $\bxs$ an additional sample from $\tilde{\mu}^N_{2\b_0,\BJ}$ having a prescribed overlap with $\bxo$.
Further,  see \cite{Capone,FFRT2} for simulations and fine predictions of the out-of-equilibrium limit equations 
of Langevin diffusions initiated at $\b_0 \in (\b_c^{\rm dyn},\b_c^{\rm stat})$,  for certain family of 
non-pure models,  \cite{FFRT1,FZ} for such results in case of the 
$N$-dimensional Gradient flow (namely,  $\b=\infty$),  and \cite{Sun} for the corresponding evolution of states.
In contrast,  to the best of our knowledge \emph{there has not been even a physics prediction of
the low-temperature dynamics},  namely,  when starting the Langevin diffusion at $\bxo$ 
drawn from a Gibbs measure within the \abbr{rsb}-phase,  as established
rigorously in Theorem \ref{thm-uncond} (for generic strictly $1$-\abbr{rsb} mixture,  see
Definition \ref{def:limit-dyn-low-temp}).

\subsection{Gibbs distributed $\bxo$ and conditional disorder}\label{subsec:Gibbs}
Our \red{proofs often require general} Langevin diffusions 
\begin{equation}\label{diffusion-phi}
 \bx_t=\bxo -\int_0^t f'(\vert\vert \bx_u\vert\vert^2/N)\bx_u du
  -\b \int_0^t \nabla \varphi_N (\bx_u) du +\BB_t,
\end{equation}
\red{with $H_{\BJ}$ of \eqref{diffusion} replaced by a generic} potential $\varphi_N \in C^2_0(\B^N)$
\red{and a} Brownian motion $\BB_t$ independent of both $\bxo \in \B^N_\star$ and $\varphi_N$.
Denoting by $\|U_N\|_T$ the sup-norm of a generic function $U_N : [0,T]^d \to \R$ for $d \in \{1,2\}$,
\red{we replace \eqref{eq:Hn} by} 
\begin{equation}\label{eq:Hn-gen}
\phiN (s):=-\frac{1}{N} \varphi_N(\Bx_s)\,,
\end{equation}
and for any $N, T$,  $\bxs$,  $\varphi_N$ and path $t \mapsto (\Bx_t,\BB_t)$ define the error 
between $\bU_N:=(C_N,\chi_N,q_N,\phiN)$ and non-random $\bU_\infty=(C,\chi,q,H)$,  as
\begin{align}
\err_{N,T} (\Bx_\cdot,\BB_\cdot,\bxs,\varphi_N;\bU_\infty) &:= 
 \|C_N-C\|_T  \wedge 1
+\|\chi_N-\chi\|_T \wedge 1
+\|q_N-q\|_T \wedge 1
+\| \phiN -  H\|_T \wedge 1  \,.
\label{eq:err} 
\end{align}
Fixing $\varphi_N \in C_0^2(\B^N)$,  $\bxo \in \SN$ and $f$ as in \eqref{eq:fdef},  we then measure 
for the strong solution 
of the corresponding Langevin diffusion \eqref{diffusion-phi},  the error between $\bU_N$ and a proposed non-random limit $\bU_\infty$,  as
\begin{align}\label{eq:err-hat}
\widehat\err_{N,T}(\bxo,\bxs,\varphi_N;\bU_\infty) := \E_{\bxo,\varphi_N} [\err_{N,T}(\Bx_\cdot,\BB_\cdot,\bxs,\varphi_N;\bU_\infty)] \,,
\end{align}
where $\E_{\bxo,\varphi_N}[\,\cdot\,]$ denotes the expectation over the 
Brownian motion $\BB_\cdot$ and the induced solution $\Bx_\cdot$ of \eqref{diffusion-phi}.  We note in passing that for
any $\bU_\infty$,  $\bxo$, $\bxs$,  $\varphi_N$ and orthogonal $\BO \in \R^{N \times N}$, 
\begin{align}\label{eq:rot-inv}
\widehat\err_{N,T}(\bxo,\bxs,\varphi_N;\bU_\infty) = \widehat\err_{N,T}(\BO \bxo,\BO \bxs,\varphi_N(\BO^T \cdot);\bU_\infty) \,,
\end{align}
with $\widehat\err_{N,T} (\bxo,\varphi_N;\bU_\infty):=\widehat\err_{N,T}(\bxo,{\bf 0},\varphi_N;\bU_\infty)$ 
which is as in \eqref{eq:err-hat} but without the term $\|q_N-q\|_T$.  

\red{Hereafter,  we use the abbreviations $\bv_x$ and $\bv_y$ for the partial derivatives of $\bv(x,y)$
of \eqref{def:vt-new},  and set
\begin{equation}\label{dfn:psi}
\psi(r):=(r \nu'(r))' = r \nu''(r) + \nu'(r) \,.
\end{equation}
We} show in the sequel 
that starting \red{\eqref{diffusion}} at $\bxo$ drawn from $\widetilde \mu^N_{2\b_0,\BJ}$ yields for generic 
strictly 1-\abbr{rsb} mixtures at 
$\b_0 > \b_c^{\rm stat}$ the following limiting 
dynamics,  for suitable,  $\b_0$-dependent \red{parameters} $\qs \in (0,1]$ and $\V$.
\begin{defi}[$1$-\abbr{rsb} dynamics\footnote{this refers to 
an initial state with $\b_0$ in the $1$-\abbr{rsb} phase,  regardless of the value of $\b$ one uses in the diffusion \eqref{diffusion}.}]\label{def:limit-dyn-low-temp}
Fixing $\qs \in (0,1]$ and $\V$,  the \emph{$1$-\abbr{rsb} spherical dynamics} 
$\bUsp(\V)$ 
are the functions $C(s,t)=C(t,s)$, $\chi(s,t):=\int_0^t R(s,u) du$, $q(s)$ and $H(s)$, such that:
\begin{equation}\label{eq:str-limit}
q(0)=\qo, \quad  C(s,s):= K(s) \equiv 1, \quad R(s,t)=0 \hbox{ for } t>s, \quad  R(s,s) \equiv 1,
\end{equation}
and for $s>t$,  the bounded,  continuous $C$, $R$, $q(s)$, $H(s)$ 
are the unique,  continuously differentable in $s$,  solution 
of the integro-differential equations
\begin{align}
\partial_s R(s,t) =
& - \mu(s) R(s,t) + \b^2 \int_t^s
R(u,t) R(s,u) \nu''(C(s,u)) du ,\label{eqRs-new}\\
\partial_s C(s,t)= & - \mu(s) C(s,t) + \b \,  \widehat{{\sf A}}_C(s,t) \,,  \label{eqCs-new}\\ 
\partial_s q(s) = & -\mu(s) q(s) + \b \,  \widehat{{\sf A}}_q(s) \,, \label{eqqs} \\
H(s)= & \b \int_0^s R(s,u) \nu'(C(s,u)) \, du  -  \b \nu'(q(s)) L(s) + \bv (q(s),C_o(s)) \,,
 \label{eqH-new}\\
\mu (s) := & \frac{1}{2} + \b \, \widehat{{\sf A}}_C(s,s) 
 \,, \label{eqZs-new} 
\end{align}
where $C_o(s):=C(s,0)$ and 
\begin{align}
\widehat{{\sf A}}_C (s,t) :=& 
\b \int_0^s R(s,u) \nu''(C(s,u)) C(u,t) \, du + \b \int_0^t R(t,u) \nu'(C(s,u)) \, du  \nonumber \\
& -  \b q(t) \nu''(q(s)) L(s) - \b \nu'(q(s)) L(t)
+ q(t) \bv_x (q(s),C_o(s)) + C_o(t) \bv_y (q(s),C_o(s))  \,,\label{def:AC-new}\\
\widehat{{\sf A}}_q (s) := &
\b \int_0^s R(s,u) q(u) \nu''(C(s,u)) du  - \b \qs^2 \nu''(q(s)) L(s) 
+ \qs^2 \bv_x (q(s),C_o(s)) + \qo \bv_y(q(s),C_o(s)) \,,\label{def:Aq-new}\\
L(s) := & \frac{1}{\nu'(\qs^2)} \int_0^s R(s,u) \nu'(q(u)) \, du \,,
\label{eqL}
\\
\label{def:vt-new}
\bv (x,y) := &  \langle \underline{\hat{\bv}}_\nu,\underline{w} \rangle \;\;
 \hbox{for} \quad \underline{\hat{\bv}}_\nu(x,y,z) :=  [\nu(y),  \nu(x), \qs^{-2} x \,  \nu'(x), z \,  \nu'(x)] 
  \;\; \hbox{and} \;\; z=z(x,y):= \frac{y - \qo \qs^{-2} x}{\sqrt{\qs^2-\qo^2}} \,,
\end{align}
\begin{align}\label{def:w-s}
\begin{bmatrix} \Ep \\ \Eg \\ \Gg \\ 0 \end{bmatrix} &=  \Sigma_\nu(\qo)
\begin{bmatrix} w_1 \\ w_2 \\ w_3 \\ w_4 \end{bmatrix}
\quad
\Sigma_\nu (\qo) := 
\begin{bmatrix}
 \nu(1) &  \nu(\qo) & \qo \qs^{-2} \nu'(\qo) & \qs^{-2} \sqrt{\qs^2-\qo^2} \nu'(\qo) \\
\nu(\qo)       & \nu(\qs^2) &   \nu'(\qs^2) & 0 \\
	\qo \qs^{-2} \nu'(\qo)  &  \nu'(\qs^2) & \qs^{-2} \psi(\qs^2) & 0 \\
	\qs^{-2} \sqrt{\qs^2 - \qo^2} \nu'(\qo)& 0 & 0 & \qs^{-2} \nu'(\qs^2) 
	\end{bmatrix}
	\end{align}
In 	the \emph{$1$-\abbr{rsb} $f$-dynamics}  $\bUqf(\V)$
instead of taking $K(s) \equiv 1$ and $\mu(s)$ of \eqref{eqZs-new}
in \eqref{eq:str-limit}-\eqref{eqqs},  
we determine the pair $(K(\cdot),\mu(\cdot))$ via 
\begin{align}
\mu(s)=f'(K(s)) \,,  \qquad 
 \label{eqfZs} 
\frac{1}{2} K'(s) + f'(K(s)) K(s) =  \frac{1}{2} + \b \,  \widehat{{\sf A}}_C(s,s)  \,,  \qquad K(0) = 1 \,.
\end{align}
\end{defi}

\red{In view of \eqref{eq:cond-J} and \eqref{eq:cond-HT},  
we only consider \emph{allowed} parameters $\V=(\Ep,\Eg,\Gg,\qo)$,  as defined next.}
\begin{defi}\label{ft:cons}
\red{A vector $\V=(\Ep,\Eg,\Gg,\qo)$ is allowed,  for $\qs$,  if the following hold:\\
(a).  $|\qo| \le \qs$ and $|\qo|<1$.\\
(b).  If $\qs=0$ then also $\Eg=\Gg=0$. \\
(c).  For pure $p$-spins\footnote{where 
$\langle \Bx,  \nabla H_\BJ (\Bx) \rangle = p H_{\BJ}(\Bx)$
and $H_{\BJ}(\alpha \Bx)=\alpha^p H_{\BJ}(\Bx)$} 
if $\qs>0$ then $\Gg=p \Eg/ \qs^2$ and further $\Eg = \qo^{p} \Ep$ whenever $\qs=|\qo|$.}
\end{defi}

\begin{remark}[singularity]	\label{rem:w-unique}
If $\qs=|\qo|$ then $w_4=0$ and \abbr{wlog} we set $z=0$ in \eqref{def:vt-new}.  More generally,  
the linear equations of \eqref{def:w-s} correspond to $\Ep=\bv(\qo,1)$, $\Eg=\bv(\qs^2,\qo)$, 
$\Gg=\bv_x(\qs^2,\qo)
$ and $\bv_y(\qs^2,\qo)=0$. 
Except for pure $p$-spins,  the matrix $\Sigma_\nu(\qo)$ is positive definite 
for any $\qs \in (0,1]$ and $|\qo| \le \qs$,  $|\qo|<1$ (see Lemma \ref{lem:app}(a)), 
yielding a unique solution $\underline{w}=[w_1,w_2,w_3,w_4]$ of \eqref{def:w-s}.  For pure $p$-spins, 
necessarily $\Gg=p \Eg/\qs^2$, so one should set $w_3=0$ and proceed to 
solve the reduced system, now positive definite unless also $\qs=|\qo|$
(in which case $w_4=0$ and $\Eg = \Ep \qs^{2p}/\qo^p$ carries no 
additional information on $H_\BJ$,  resulting with 
$\bv(x,y)=\Ep y^p$).
\end{remark}
\begin{remark}[even symmetry]\label{rem:even}
Similarly to \cite[Remark 1.9]{DS21},  taking $\qo \mapsto -\qo$ in Definition \ref{def:limit-dyn-low-temp}
results,  in case $\nu(\cdot)$ is even,  with $(C,R,q,H) \mapsto (C,R,-q,H)$ in \eqref{eqRs-new}-\eqref{eqH-new} 
(for
then the function 
$\bv(x,y)$ at $\qo$ matches $\bv(-x,y)$ at $-\qo$).  Indeed,  
taking $\bxs \mapsto -\bxs$ results with $q_N \mapsto - q_N$ while leaving 
$(C_N,\chi_N,\HN)$ unchanged and if $\nu(\cdot)$ is an even function, 
then also $H_{\BJ}(-\bx)=H_{\BJ}(\bx)$ per given $\BJ$ and hence 
$\cptq(\Ep,\Es,\Gs,\qo)=\cptq(\Ep,\Es,\Gs,-\qo)$.  
\end{remark}

For initial conditions drawn at the \abbr{rs}-phase,  one finds instead the following,  simpler limiting dynamics,  with a
single $\b_0$-dependent parameter $\Ep$,  which coincides with the \abbr{ckchs}-equations  
in case $\Ep=0$ (and thereby $\bv(\cdot)  \equiv 0$).
\begin{defi}[\abbr{rs} dynamics]\label{def:limit-dyn-high-temp}
Fixing $\V=(\Ep)$,  the \emph{\abbr{rs} spherical dynamics} 
$\bUosp(\V)=\bUosp(E)$ are the unique functions 
$C(s,t)=C(t,s)$, $\chi(s,t)=\int_0^t R(s,u) du$ and $H(s)$ which satisfy 
\eqref{eq:str-limit}-\eqref{def:Aq-new} when $\qs=\qo=0$.  In that case 
$q(s) \equiv 0$,  so we take \abbr{wlog} $L(s) \equiv 0$ and $\bv (y) = \bv(0,y) := \Ep \nu(y)/\nu(1)$,
resulting with equations \eqref{eqRs-new}, \eqref{eqCs-new} and \eqref{eqZs-new},  for 
$C_o(s):=C(s,0)$ and
\begin{align}
\widehat{{\sf A}}_C (s,t) =& 
 \b \int_0^s R(s,u) \nu''(C(s,u)) C(u,t) \, du + \b \int_0^t R(t,u) \nu'(C(s,u)) \, du +  C_o(t) \bv' (C_o(s))  \,, 
 \label{def:ACs} \\
H(s) = &  \b \int_0^s R(s,u) \nu'(C(s,u))  \, du  + \bv (C_o(s))\,,  \label{eqH}
\end{align}
with the 
\emph{\abbr{rs} $f$-dynamics}  $\bUof(\V)=\bUof(\Ep)$ again replacing $K(s) \equiv 1$ and $\mu(s)$ of \eqref{eqZs-new}, 
by $(K(\cdot),\mu(\cdot))$ of \eqref{eqfZs}.
\end{defi}
\begin{remark}[non-negative definiteness]\label{rem:non-neg} 
While not apparent from \eqref{eqCs-new},  our results imply that $C(s,t)$ 
of $\bUqf$,  $\bUsp$ are non-negative definite kernels,  and that when $\qs>0$ so are
$\bar{C}(s,t):=C(s,t)-q(s) q(t)/\qs^2$.  We utilize this when analyzing the limit dynamics.
For example,  similarly to \cite[Remark 1.7]{DS21},  the functions 
$(R,C,q)$ 
may take negative values even if $\qo>0$,  but in $\bUsp(\cdot)$ we always have that 
$|q(\cdot)| \le \qs$ and $|C(\cdot,\cdot)| \le 1$
(since $C(s,s) \equiv 1$).
\end{remark}

It is easy to check that our results also yield the convergence in probability 
\begin{align}
\lim_{N \to \infty} \frac{-1}{N} \langle \nabla H_\BJ(\bx_s), \bx_t \rangle &=  \widehat{{\sf A}}_C (s,t)   \,, \qquad
\forall s \ge t, \\
\lim_{N \to \infty} \frac{-1}{N} \langle \nabla H_{\BJ}(\bx_s),\bxs \rangle &= \widehat{{\sf A}}_q(s)  \,.
\end{align}

\red{As we detail next,} the values of $\qs$ and $\V$ one should set as the proxy for initializing 
from $\bxo$ drawn according to the spherical 
mixed $p$-spins Gibbs measure $\tilde{\mu}^N_{2\b_0,\BJ}$ \red{are determined by
the relevant Parisi measure $\zeta_P$}. 
\begin{defi}[Gibbs measure initialization]$~$\label{def:Gibbs-ic}
\newline
For $\bxo$ drawn from the Gibbs measure at $\b_0 \le \b_c^{\rm stat}$, we consider $\bUof(\Ep)$ and $\bUosp(\Ep)$
for $\Ep=\Ep(\b_0)$,  where
\begin{equation}\label{eq:Ep-high-temp}
\Ep(\b) := 2 \b \nu(1) \,.
\end{equation}
Next, recall $q_{\rm EA}(\b) \in (0,1)$ of \eqref{eq:kRSB} and
the Parisi variational formula derived in \cite{Chen-Sen,Jagannath-Tobasco} for the a.s.  limit of the ground 
state energy of the model $\nu(q^2 \cdot)$
\begin{equation}\label{def:GS-q}
\GS(q) = \lim_{N \to \infty} \frac{1}{N} \min_{\bsigma\in q\,\SN} \{ H_{\BJ}(\bsigma) \} 
\end{equation}
(i.e. for the Gibbs measures $\tilde{\mu}^N_{2\b,\BJ}(\cdot)$ restricted to $q\, \SN$).
Setting
\begin{align}\label{def:qs-beta}
\qs = &\qs(\b) := \sqrt{q_{\rm EA} (\b)} \,, \qquad \qquad \qo = \qo(\b) =q_{\rm EA}(\b) \,,\\
\label{def:Es}
\Eg=& \Egs(\b) := \GS (\qs) \,, \\
\label{G-stat}
\Gg = & \Ggs (\b) := \frac{1}{2 \b (1-\qs^2)} + 2 \b (1-\qs^2) \nu''(\qs^2) \,,\\
\label{Ep-E}
\Ep = & \Ep (\b) := \Es(\b) + 2\b \theta(\qs^2) \,, \qquad 
\theta(x):=\nu(1) - \nu(x) - \nu'(x) (1 - x) \,,
\end{align}
for $\bxo$ drawn from the Gibbs measure at $\b_0>\b_c^{\rm stat}$ which is strictly $1$-\abbr{rsb}, 
we consider $\bUqf(\V)$ and $\bUsp(\V)$
for $\qs(\b)$ and 
$\V(\b)$ of \eqref{def:qs-beta}-\eqref{Ep-E},  evaluated at $\b=\b_0$.
\end{defi}

As promised,  our main result,  stated next,  relates the dynamics of the 
model initialized at 
$\bxo$ drawn from $\tilde{\mu}^N_{2\b_0,\BJ}$, with the limiting dynamics 
of Definitions \ref{def:limit-dyn-low-temp} and \ref{def:limit-dyn-high-temp},
according to $\b_0>\b_c^{\rm stat}$ or $\b_0 \le \b_c^{\rm stat}$,  respectively.
\begin{theo}\label{thm-uncond}
Fix finite $\b,T>0$.  Choose $\bxo \in \SN$ according to $\tilde{\mu}^N_{2\b_0,\BJ}$ and 
consider
the dynamics \eqref{diffusion},  where the derivative of $f(\cdot)$ of the form \eqref{eq:fdef} 
is locally Lipschitz on $[0,r_\star^2)$ and the mixture $\nu(\cdot)$ satisfies \eqref{eq:r-star}.
\newline
(a). In case $\b_0 \le \b_c^{\rm stat}$,  for the dynamics 
$\bUof$ of Definition \ref{def:limit-dyn-high-temp}
at $\b$ and $\Ep=\Ep(\b_0)$ of \eqref{eq:Ep-high-temp},  as $N \to \infty$
\begin{equation}\label{eq:uncond} 
\widehat \err_{N,T}\big(\bxo,H_{\BJ};\bUof(\Ep)\, \big)  \to 0\,, \quad \hbox{in probability},
\end{equation}
at an exponential in $N$ rate.
\newline
(b). Suppose generic 
$\nu(\cdot)$ is strictly 1-\abbr{rsb} at  $\b_0 > \b_c^{\rm stat}$.
Set $\qs=\qs(\b_0)$ and $\V(\b_0)$ of \eqref{def:qs-beta}-\eqref{Ep-E}.  Then,  
there exists measurable mapping $\bxs = \bxs(\bxo,H_\BJ):\SN \mapsto \SNqs$ 
and $\delta_N \to 0$ as $N \to \infty$,   such that
\begin{align}\label{eq:band-dec}
\P\Big(  \big\{ \bigcup \cptq (\V') :  \|\V'-\V(\b_0) \| \le \delta_N \big\} \Big) & \to 1 \,,\\
\widehat\err_{N,T}\big(\bxo,\bxs,H_{\BJ};\bUqf(\V(\b_0))\big) & \to 0\,,  \quad \hbox{in probability}\, ,
\label{eq:uncond-low} 
\end{align}
for the $1$-\abbr{rsb} limiting dynamics $\bUqf(\cdot)$ of 
Definition \ref{def:limit-dyn-low-temp},  at the specified value of $\b>0$.
\end{theo}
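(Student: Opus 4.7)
The strategy is to reduce Theorem \ref{thm-uncond} to a \emph{conditional} dynamics result that I anticipate establishing earlier in the paper: for fixed $\V$, conditional on $\cpteq(\V)$ (respectively $\cpteo(\V)$) and with $\bxo$ distributed uniformly on the overlap-$\qo$ slice of $\SN$ determined by $\bxs$, the error $\widehat\err_{N,T}$ of \eqref{eq:err-hat} tends to zero in probability at an exponential-in-$N$ rate, with limit $\bUqf(\V)$ (respectively $\bUof(\Ep)$). Granting this, Theorem \ref{thm-uncond} becomes a static/geometric statement: under the Gibbs initialization, the relevant parameters concentrate on the value $\V(\b_0)$ prescribed in Definition \ref{def:Gibbs-ic}, and the limiting dynamics depends continuously on $\V$, so a mesh argument transfers the conditional estimate to the unconditional one.

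For part (a), at $\b_0 \le \b_c^{\rm stat}$ the quenched free energy is annealed, $F(\b)=2\b^2\nu(1)$, so Gaussian concentration for $\BJ\mapsto \log Z_{2\b_0,\BJ}$ combined with $-\tfrac{1}{2\b_0}\partial_\b F(\b_0) = -2\b_0\nu(1)$ yields that $H_\BJ(\bxo)/N$ concentrates exponentially under $\tilde{\mu}^N_{2\b_0,\BJ}\otimes\P_\BJ$ at $-\Ep(\b_0)$. Partitioning a window around $\Ep(\b_0)$ into a mesh of vanishing size, I would apply the conditional result on each cell $\cpteo(\Ep')$ and use the transparent continuity of $\bv(y)=\Ep'\,\nu(y)/\nu(1)$ (via \eqref{def:ACs}--\eqref{eqH}) to assemble \eqref{eq:uncond}.

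For part (b), I would import the \emph{band decomposition} of \cite[Thm.\ 1.4]{DS24}: at generic strictly $1$-\abbr{rsb} $\b_0$, the measure $\tilde{\mu}^N_{2\b_0,\BJ}$ is asymptotically a uniform mixture of overlap-$\qo(\b_0)$ bands centered at critical points $\bxs\in\qs(\b_0)\SN$ of $H_\BJ$ whose normalized value $-H_\BJ(\bxs)/N$ and radial derivative $-\langle\nabla H_\BJ(\bxs),\bxs\rangle/(N\qs^2)$ are sharply concentrated at $(\Egs(\b_0),\Ggs(\b_0))$ from \eqref{def:Es}--\eqref{G-stat}. Given $\bxo$, define $\bxs=\bxs(\bxo,H_\BJ)$ as the (a.s.\ unique for large $N$) such critical point at overlap $\qo(\b_0)$ with $\bxo$; measurability is ensured by restricting to a compact explicit set of candidate critical points and taking the nearest one, and uniqueness follows from the strict 1-\abbr{rsb} rigidity, which separates competing bands. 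A Gaussian computation (parallel to \cite[Thm.\ 1.1]{DS24}) then shows that, conditional on $\cptq(\V')$ with $\V'$ within $\delta_N$ of $\V(\b_0)$, the conditional mean of $H_\BJ(\bxo)/N$ equals $-\bv(\qo,1) = -\Ep(\b_0)$ with $\Ep(\b_0)=\Egs(\b_0)+2\b_0\theta(\qs^2)$ of \eqref{Ep-E}, and fluctuations are exponentially small. Meshing over $\V'$ and invoking the conditional dynamics result on each cell, using continuity of $\V'\mapsto \bUqf(\V')$, yields \eqref{eq:band-dec} and \eqref{eq:uncond-low}.

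The principal obstacle is the measurable selection of $\bxs$ and the uniqueness of the band containing $\bxo$; this is precisely where the \emph{strict} 1-\abbr{rsb} hypothesis is essential, and it relies on the complexity/second-moment estimates of \cite{DS24} to exclude configurations where two prescribed-type bands come close enough to share $\bxo$ with non-negligible probability. A secondary technical point is the degenerate case in which $\Sigma_\nu(\qo)$ of \eqref{def:w-s} fails to be invertible (pure $p$-spins or $\qs=|\qo|$, cf.\ Remark \ref{rem:w-unique}); there the conditioning on $\cptq(\V)$ imposes only as many independent linear constraints on $H_\BJ$ as the rank of $\Sigma_\nu(\qo)$, and the rotation invariance \eqref{eq:rot-inv} is used to fix $\bxs$ at a canonical direction before conditioning.
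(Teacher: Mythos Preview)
Your strategy is essentially the paper's: reduce to the conditional result (Theorem~\ref{thm-macro}) plus the static input from \cite{DS24}, then assemble. The paper packages the assembly step differently. Rather than a hand-rolled mesh over $\V'$ combined with continuity of $\V'\mapsto\bUqf(\V')$, it invokes the abstract transfer principles of \cite{DS24} directly: \cite[Thm.~1.1]{DS24} for part~(a) and \cite[Cor.~A.1]{DS24} for part~(b), feeding them (i) the exponential conditional bound \eqref{eq:exp-err-LT}, (ii) rotation invariance \eqref{eq:rot-inv}, and (iii) continuity of $\varphi_N\mapsto\widehat\err_{N,T}(\bxo,\bxs,\varphi_N;\bU_\infty)$, which is isolated as Lemma~\ref{lem:cont-varphi}. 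Note that continuity of the limiting dynamics in $\V$ is not actually needed, since Theorem~\ref{thm-macro} already compares to the fixed target $\bUqf(\V)$ uniformly over $\V'\in\vB_\epsilon(\V)$; what the \cite{DS24} machinery requires instead is continuity in the \emph{potential}, because its conditioning step perturbs $H_\BJ$. For the measurable selection of $\bxs$, the paper avoids your ``nearest critical point'' construction and instead takes $\bxs(\bsigma,H_\BJ)=\bx_i^\star$ for $\bsigma\in B_i$, where $(\bx_i^\star,B_i)_{i\le d_N}$ is the pure-state partition of \cite[Prop.~3.1]{DS24} (with an arbitrary default on the negligible complement $B_0$); this sidesteps any uniqueness issue entirely. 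Your argument would work, but the paper's route is shorter because the mesh, the concentration of $(\Ep,\Eg,\Gg,\qo)$, and the handling of the null-set conditioning are all already absorbed into the cited results from \cite{DS24}.
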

\begin{remark} \red{In Theorem \ref{thm-uncond}(b) we learn from \eqref{eq:uncond-low} that choosing 
$q_N(s)$ of \eqref{eq:bnq} with respect to certain $\bxs(\bxo,H_{\BJ}) \in \qs \SN$ yields as $N \to \infty$ to 
a $\|\cdot\|_T$-convergence in probability
of $(C_N,\chi_N,q_N,H_N)$ to the specified limit dynamic $\bUqf(\V(\b_0))$ of Definition \ref{def:limit-dyn-low-temp}.
Further,  \eqref{eq:band-dec} shows that up to a negligible probability as $N \to \infty$,
the random $\bxs$ is a critical point of $H_{\BJ}$ restricted to $\qs \SN$,  with
$\frac{-1}{N} H_{\BJ}(\bxo)$,  $\frac{-1}{N} H_{\BJ}(\bxs)$,  the radial derivative of $\frac{-1}{\sqrt{N}}H_{\BJ}$ 
at $\bxs$ and $q_N(0)$,  all within some non-random $\delta_N \to 0$ of the values 
$\V(\b_0)$ we specify in \eqref{def:qs-beta}-\eqref{Ep-E}}.
\end{remark}
Analogously to \cite[Prop. 1.6]{DS21},  our next result relates 
the random functions $\bU_N=(C_N,\chi_N,q_N,\HN)$ 
to the spherical limit dynamics 
$\bUsp(\V)$ and $\bUosp(\Ep)$ of Definitions  \ref{def:limit-dyn-low-temp}
and \ref{def:limit-dyn-high-temp},  respectively.
\begin{prop}\label{prop:ell-lim}
Fix $\nu(\cdot)$ satisfying \eqref{eq:r-star},  finite $\b,T>0$,  $\qs \in [0,1]$ and \red{allowed 
$\V$ as in Definition \ref{ft:cons}.}
Suppose $f_\ell$ as in \eqref{eq:fdef} has differentiable derivatives and 
\begin{equation}\label{eq:f0-cnd}
f'_0(1)=\frac{1}{2} + \b \qo \bv_x(\qo,1)+\b \bv_y(\qo,1) \,,
\end{equation}
in case $\qs>0$,  or  $f'_0(1)=\frac{1}{2} + \b \bv'(1)$ if $\qs=0$.  Then,   
\begin{align}
\lim_{\ell \to \infty} \| \,  \bU_{\qs}^{f_\ell}(\V) - \bUsp(\V) \,\|_T  &=0 \,.
\end{align}
Consequently,  for $\V(\b_0)$ and mapping $\bxs(\bxo,H_{\BJ})$ as in Theorem \ref{thm-uncond}, 
we have as $N \to \infty$ then $\ell \to \infty$,  that 
$\|\bU_N-\bUsp(\V(\b_0))\|_T \stackrel{p}{\to} 0$,  with $\qs=0$ for $\b_0 \le \b_c^{\rm stat}$,
while $\qs>0$ for generic,  strictly $1$-\abbr{rsb} mixtures.
\end{prop}

\red{Recall that for} any $N \ge 1$,  the Langevin diffusion \eqref{sphere-diffusion} with $\bxo$ drawn 
from the Gibbs measure $\mu^N_{2\b,\BJ}$ of \eqref{Gibbs}-\eqref{def:ZbeJ}
is a stationary process.  In particular,  any limit point of $\E [C_N(s,t)]$,  as $N \to \infty$,
must then be of the form $c(|s-t|)$.  We naturally expect the same when taking such $\bxo$
in \eqref{diffusion} and considering the limit $N \to \infty$ followed by $\ell \to \infty$.  That is,  
in view of Theorem \ref{thm-uncond} and Proposition \ref{prop:ell-lim},
we also expect such stationary limit dynamic $\bUsp(\V)$ when considering $\b_0=\b$ and 
$\V$ as in Definition \ref{def:Gibbs-ic}.  More generally,  one may wonder:
\newline
(a).  Is such a stationary limit related to the \abbr{fdt}-regime of the non-stationary 
\abbr{CKCHS}-equations?\\
(b).  Which $(\qs,\V)$ yield a stationary limit $\bUsp(\V)$ and is this 
possible for Gibbs initialization at some $\b_0 \ne \b$?\\
(c).  For the stationary limit in the $1$-\abbr{rsb} phase
(that is,  $\b_0=\b > \b_c^{\rm stat}$),  does the 
limit dynamic relax within $O_N(1)$-times to the corresponding band
in the pure state decomposition of the 
Gibbs measure $\widetilde{\mu}^N_{2\b,\BJ}$ of \eqref{eq:Gibbs-sph}?

\red{In Section \ref{subsec:stat} we} provide definitive answers to all of these questions.

\begin{remark}[gradient flow]\label{rem:grad}
Rescaling time $\tau=t/\b$ in \eqref{diffusion} and setting $\by_t=\bx_{\tau}$,  results with 
\begin{equation}\label{eq:pre-gradient}
 d\by_t=-\frac{1}{\b} f_\ell ' (\vert\vert \by_t \vert\vert^2/N)\by_t d t 
  -\nabla H_\BJ(\by_t) d t +\frac{1}{\sqrt{\b}} d\BW_{t},  
\end{equation}
for the standard $N$-dimensional Brownian motion $\BW_t:= \sqrt{\b} \,  \BB_{\tau}$ and initial condition $\by_0=\bxo$.
In the limit as $\ell \to \infty$ and then $\b \to \infty$,  the system \eqref{eq:pre-gradient} yields the $N$-dimensional 
gradient flow on $\SN$,
\[
\frac{d \by_t}{dt} = - \nabla_{\rm sp} H_{\BJ}(\by_t) \,.
\]
Note that scaling the time argument of $(C_N,q_N,\HN)$ by $1/\b$ results with the corresponding 
functions for $\by_t$,  while for a non-vanishing limiting response as $\b \to \infty$,  one must replace $\chi_N$ 
by
$\frac{\sqrt{\b}}{N} \langle \by_s,\BW_t \rangle=\b \chi_N(\frac{s}{\b},\frac{t}{\b})$.  
By Theorem \ref{thm-uncond},  for a Gibbs distributed initial condition $\by_0$,  as $N \to \infty$ 
these functions converge in probability to the relevant solutions $\bUqf(\cdot)$ 
with all its time arguments rescaled by $1/\b$.  By Proposition  \ref{prop:ell-lim},  taking $\ell \to \infty$ yields  
the similarly time scaled solutions of $\bUsp(\cdot)$
(with $\mu(\cdot)$ and $L(\cdot)$ also scaled in size by the factors $1/\b$ and $\b$,  respectively).  This is equivalent 
to setting $\mu (s) = \frac{1}{2\b} +  \widehat{{\sf A}}_C(s,s)$ in \eqref{eqZs-new} and otherwise 
having $\b=1$ in Definitions \ref{def:limit-dyn-low-temp} and \ref{def:limit-dyn-high-temp}. 
An easy adaptation the proof of Proposition \ref{prop:ell-lim} thus shows that taking $\b \to \infty$ yields
the unique solution $\widetilde{\bU}_{\qs}^{\rm sp}(\cdot)$ that corresponds to
$\b=1$ and $\mu (s) = \widehat{{\sf A}}_C(s,s)$
in Definitions \ref{def:limit-dyn-low-temp} and \ref{def:limit-dyn-high-temp}.
\end{remark}

\begin{remark}[non-generic and $k$-\abbr{rsb},  $k \ge 2$] \label{rem:k-rsb} Our results apply also for non-generic,
strictly $1$-\abbr{rsb} models $\nu(\cdot)$ for which the pure-state decomposition of \cite[Prop. 3.1]{DS24} holds.  Moreover, 
while we do not do so here,  one can similarly derive for any $k \ge 2$
the limiting dynamics $\bU^{\rm sp}_{{\sf \qs}(\b_0)}(\V(\b_0))$ 
for strictly $k$-\abbr{rsb} mixtures
at $\b_0>\b_c^{\rm stat}$.  The parameters of Definition \ref{def:Gibbs-ic} 
then extend to ${\sf \qs}(\b)=(\sqrt{q_j-q_{j-1}},  1 \le j \le k)$ and $\V(\b)$ consisting of suitable $(2k+1)$ values 
$(\Ep(\b),E_\star^{(j)}(\b),G_\star^{(j)}(\b), 1 \le j \le k)$.  The function $q_N(s)$ of 
\eqref{eq:bnq} is likewise replaced by the overlaps ${\sf q}_N(s)=(q_N^{(j)}(s),1 \le j \le k)$ of $\bx_s$
with $k\ge 2$ orthogonal vectors $\bxs^{(j)} \in \qs^{(j)} \SN$,   starting at ${\sf q}_o(\b)=(q_j-q_{j-1}, 1 \le j \le k)$. 
In the equations for $\bU^{\rm sp}_{\sf \qs}$ the function $\bv(\cdot)$ of \eqref{def:vt-new} 
is replaced by $\bv(\bx,y)$ whose argument $(\bx,y) \in \R^{k+1}$ consists of  
the limit ${\sf q}(s)$ of ${\sf q}_N(s)$,  amended by $C_o(s)$,  with $\Sigma_\nu({\sf q}_o)$ 
of \eqref{def:w-s} now being a $2(k+1)$-dimensional matrix and the non-negative definite kernels are 
$\bar{C}(s,t)=C(s,t)-\sum_{j} q^{(j)}(s) q^{(j)}(t)/q_o^{(j)}$.  The events $\cpt_{\sf \qs}(\V)$ of \eqref{eq:cond-J}
now require that $H_{\BJ}$ have tangentially critical points at $\sum_{j \le m} \bxs^{(j)}$ for $1 \le m \le k$,  
as a result of which the formulas \eqref{eqH-new} and \eqref{def:AC-new}-\eqref{eqL} for 
$H(s)$,  $\widehat{{\sf A}}_C (s,t)$ and the $k$-dimensional 
$\widehat{{\sf A}}_{\sf q} (s)$,  ${\sf L}(s)$ be more involved than those for $k=1$.
\end{remark}

\subsection{Stationary dynamics and band relaxation}\label{subsec:stat}
The stationary limiting dynamics are defined as follows.
\begin{defi}\label{def:stat}
A solution $(C,R,q,H)$ of \eqref{eq:str-limit}-\eqref{def:w-s} is called stationary if 
$C(s,t)=c(s-t)$, $R(s,t)=r(s-t)$, $q(s)=q(0)$ and $H(s)=H(0)$ for all $s,t \ge 0$.
\end{defi}
Towards characterizing the stationary dynamics,  we 
set for $\b>0$ and $\gamma \in \R$,  as in \cite[(2.2)]{DS21}, the function 
\begin{equation}\label{def:phi-gamma}
\phi_\gamma(x):=\gamma + 2 \b^2 \nu'(x) \,,
\end{equation}
and set,  for $g_\b(x)$ of \eqref{def:g-beta},  as done in \cite[(2.7)]{DS21}, 
\begin{equation}\label{eq:c-inf}
c_\gamma(\infty):=\sup\{ x \in [0,1] : g_{\b}'(x) \ge \frac{1}{2} - \gamma \} \,,
\end{equation}
provided $\gamma$ is large enough that such $x$ exists.  Now,  recall \cite[Prop. 1.4]{DGM},  that the \abbr{fdt} equation
\begin{equation}\label{FDTDb-new}
 c'(\tau)=-\int_0^\tau \phi_\gamma(c(v)) c'(\tau-v) dv - \frac{1}{2}\,, \qquad 
 c(0)=1 \,,
\end{equation}
admits for such $\gamma$ a unique $[0,1]$-valued,  strictly decreasing,  continuously differentiable solution,  which 
converges as $\tau \to \infty$ to $c_\gamma(\infty)$ of \eqref{eq:c-inf}.   Further,  \cite[Prop.  1.4]{DGM} 
shows that an \abbr{FDT} solution $C_{\fdt}(\tau)$ for the
\abbr{CKCHS}-equations,  namely for $\bUosp(0)$ of Definition
\ref{def:limit-dyn-high-temp},  must be of the form \eqref{FDTDb-new},  converging 
 as $\tau \to \infty$ to
\begin{equation}\label{def:qd-beta}
\qdyn (\b) := \sup \{ x \in [0,1) : g'_\b(x) \ge 0 \} \,.
\end{equation}
In particular,  for  \red{$g_\b(x)=g_\b(x;\nu)$ of \eqref{def:g-beta} and}
the positive and finite,  
critical parameter of \cite[(1.23)]{DGM},  
\begin{equation}\label{def:bc-dyn}
\b_c^{\rm dyn} = \b_c^{\rm dyn} (\nu) := \inf \{ \b \ge 0 : \sup_{x \in [0,1)} \{ g'_\b(x) \} > 0 \} \,,
\end{equation}
one has by definition that $\qdyn(\b)=0$ for $\b<\b_c^{\rm dyn}$ 
while $\qdyn (\b)>0$ for $\b>\b_c^{\rm dyn}$.  Since $g_\b(0)=0$ and $g'_\b(0)=0$, 
necessarily $\b_c^{\rm dyn} \le \b_c^{\rm stat}$ of \eqref{def:bc-stat},  with 
a strict inequality provided $\qdyn (\b_c^{\rm dyn})>0$ (i.e.  a first-order dynamic phase transition),
or alternatively, whenever
\begin{equation}\label{eq:betac-gap}
\sup \{ x \in [0,1] : \sum_{p \ge 3} p b_p^2 x^{p-3} (1-x) \} > 2 b_2^2 \,.
\end{equation}

Our next result characterizes the parameters for which $\bUsp(\cdot)$ are stationary,  
further relating such stationary dynamics to the solution of \eqref{FDTDb-new} and thereby to the large time asymptotic \abbr{fdt} 
regime for the corresponding equations (and at low temperature, also to the onset of aging).
\begin{prop}\label{prop-stat}
In case $\qs>0$,  set $\qo=\alpha \qs$ for $|\alpha| \le 1$.  The collection of all stationary dynamics of the form
$\bUsp(\cdot)$,  consists of $r(\tau)=-2 c'(\tau)$ and the twice continuously 
differentable, strictly decreasing, unique solution $c(\tau)=c_{\gamma_\star}(\tau)$ of 
 \eqref{FDTDb-new},  for $|\alpha|<1$ and
\begin{equation}\label{def:gamma-star}
\gamma_\star(\qs,\alpha) := \frac{1}{2 (1 - \alpha^2)} - 2 \b^2 \frac{\nu'(\alpha \qs)^2}{\nu'(\qs^2)} \,.
\end{equation} 
It is easy to verify that $c_{\gamma_\star}(\infty) \ge \alpha^2$,  with equality if and only if 
$g_\b'(x) < g_\b'(\alpha^2)$ whenever $x \in (\alpha^2,1]$ and either the mixture $\nu(\cdot)$ is pure 
$p$-spins or $\alpha=\qs$.\footnote{\label{ft:even-odd}
\noindent 
Except when $\nu(\cdot)$ is even or odd,  taking instead $\alpha=-\qs$ has no effect on 
$\gamma_\star$ or $c_{\gamma_\star}(\cdot)$.}

\red{With $\psi(\cdot)$ of \eqref{dfn:psi} and}
$b_\alpha:= \nu'(\alpha \qs)/\nu'(\qs^2)$,  such stationary dynamics appears if and only if $|\alpha|<1$ and 
\begin{align}\label{def:stat-EG}
	\begin{bmatrix}  
	 \Ep - 2\b ( \nu(1) - (1-\alpha^2) \nu'(\alpha \qs) b_\alpha) \\ 
	\Eg - 2 \b \nu(\alpha \qs) \\ 
	\qs \Gg - 2 \b \alpha  \nu'(\alpha \qs)  \\
	\frac{\alpha}{2\b  (1-\alpha^2)}
	- 2\b b_\alpha [\alpha \psi(\alpha \qs)  - \qs \nu''(\alpha \qs)]
	 \end{bmatrix}  &\in {\rm Image} \begin{bmatrix}
	 \nu(\alpha \qs) & \alpha \qs \nu'(\alpha \qs)  \\
	 \nu(\qs^2)       & \qs^2 \nu'(\qs^2)  \\
	 \qs \nu'(\qs^2)  & \qs \psi(\qs^2)  \\
	 \qs \nu'(\alpha \qs) & \qs \psi(\alpha \qs) 
	\end{bmatrix}
	\end{align}
\red{In particular},  for $\alpha=\qs$\footnote{\label{ft:even} 
If $\nu(\cdot)$ is even,  this applies also for $\alpha=-\qs$,  which does not change  
the functions $(C,\chi,H)$ in $\bUsp(\cdot)$,  see Remark \ref{rem:even}.}
\red{the constraint \eqref{def:stat-EG} amounts to}
\begin{align}\label{eq:stat-G-qs}
\Gg &= \frac{1}{2\b (1-\qs^2)} + 2 \b (1 - \qs^2) \nu''(\qs^2)  \,,  \\
\Ep - \Eg &= 2\b [ \nu(1) - (1-\qs^2) \nu'(\qs^2) -  \nu(\qs^2)] 
\label{eq:stat-EpE-qs}
\end{align}
\red{(as in \eqref{G-stat}-\eqref{Ep-E})}.  Similarly,  the only 
stationary dynamic of the form $\bUosp(\Ep)$ is $r(\tau)=-2 c'(\tau)$ 
and $c(\tau)=c_{1/2}(\tau)$ of \eqref{FDTDb-new},  which occurs if and only if $\Ep=2\beta \nu(1)$ as in \eqref{eq:Ep-high-temp}.
\end{prop}

\begin{remark} By the preceding,  for any $\qs>0$,  \red{having} a stationary dynamics $\bUsp(\cdot)$ 
determines via \eqref{def:stat-EG} only two degrees of freedom among the parameters $\V=(\Ep,\Eg,\Gg,\qo)$ 
(for pure $p$-spins it actually determines only one,  but that case fixes the value of $\Gg=\Gg(\Eg,\b,\qs)$). 
In particular,  one has a stationary dynamics whenever $\qo=\qs^2$ with $(\Gg,\Ep-\Eg)$ 
given in terms of $(\b,\qs)$ via 
\eqref{eq:stat-G-qs}-\eqref{eq:stat-EpE-qs}.  \red{But,  for $\bxo$ 
drawn from}  $\widetilde{\mu}^N_{2\b_0,\BJ}$ at some
$\b_0>\b_c^{\rm stat}$ where the model is strictly 1-\abbr{rsb},  
\red{all degrees of freedom disappear,  yielding}
a stationary limit dynamics $\bUsp(\cdot)$ if and only if $\b=\b_0$
(with the stationary solution then prescribed by Corollary \ref{cor:relax}).
Indeed,  by Definition \ref{def:Gibbs-ic} and Theorem \ref{thm-uncond}(b)
such Gibbs initial condition requires taking $\qo=\qs^2=q_{\rm EA}(\b_0)$.  
\red{In view of \eqref{eq:stat-EpE-qs}} 
a stationarity 
dynamics can \red{then} only appear if $\Ep-\Eg=2\b \theta(\qs^2)$ for such $\qs$ and 
$\theta(\cdot)$ of \eqref{Ep-E},  at $\b$ of the diffusion \eqref{diffusion}. 
Recalling from Definition \ref{def:Gibbs-ic} that one must have taken 
$\Ep-\Eg=\Ep(\b_0)-\Eg(\b_0)=2\b_0 \theta(\qs^2)$,  this is only possible when $\b=\b_0$,  as claimed.
\end{remark}

Considering the conclusion of Proposition \ref{prop:ell-lim} at $\b_0=\b$,  we provide in the sequel thermodynamic properties
of the shift-invariant Langevin dynamic \eqref{diffusion},  starting at  $\bxo$ 
sampled from the Gibbs measure $\widetilde{\mu}^N_{2\b,\BJ}$.  In particular,  we
determine precisely when such (random) dynamics has $O_N(1)$-band-relaxation time,  in the
following sense.
\begin{defi}\label{def:rel}
For $\hbxs
\in \SN$ and $\alpha \in [-1,1]$,  we say that a 
Langevin dynamics \eqref{diffusion-phi} relaxed within $O_N(1)$-time 
onto $\alpha \hbxs$-band,  if as $N \to \infty$ followed by $\ell \to \infty$,  then $t \to \infty$ and finally $\tau \to \infty$,
\begin{equation}\label{def:fast-conf}
\frac{1}{N} \langle \bx_\tau,\hbxs \rangle  \to \alpha,   \qquad C_N(\tau+t,t) \to \alpha^2 , \qquad \hbox{in probability}  \,.
\end{equation}
We say that \eqref{diffusion-phi} has \emph{$O_N(1)$-band-relaxation time},
if \red{for some measurable map $(H_{\BJ},\bxo) \mapsto (\alpha,\hbxs)$ and taking the 
limits over $N$,  $\ell$,  $t$ and $\tau$ in the same order as above,   \eqref{def:fast-conf} holds} and 
\begin{equation}\label{def:fast-rel}
\frac{1}{N} \langle \bx_\tau - \alpha \hbxs, \bxo \rangle \to 0 \, , \qquad 
\hbox{in probability}  \,.
\end{equation}
Lastly,  dynamics have \emph{$O_N(1)$-relaxation time} if they satisfy \eqref{def:fast-conf}-\eqref{def:fast-rel} 
with $\alpha=0$.
\end{defi}
\begin{remark} As $N,\ell \to \infty$ the dynamics \purple{\eqref{diffusion}} evolves near $\SN$,  in which case 
\eqref{def:fast-conf} amounts to $\bx_\tau$ converging in $O_N(1)$-time to the band 
$
{\Bbb S}_{\hbxs}(\alpha)$ of \eqref{eq:subsphere},  where it further un-correlates in $O_N(1)$-time.  Namely,
\[ 
\Big| \frac{1}{N} \langle \bx_{t+\tau}-\alpha \hbxs, \bx_t-\alpha \hbxs \rangle \Big| \to 0 \,.
\]
Having also \eqref{def:fast-rel} means that within the band the dynamics has un-correlated from
its initial state $\bxo$. 
\end{remark}
 
Recall \cite[Section 1.2]{SubagFlandscape},  that the Hamiltonian $H_{\BJ}(\cdot)-H_{\BJ}(\bxs)$ 
on the band ${\Bbb S}_{\bxs} (q)$ of \eqref{eq:subsphere},  conditional on the event 
$\nabla_{\rm sp} H_{\BJ}(\bxs) = {\bf 0}$ corresponds after the isometry ${\Bbb S}_{\bxs} (q) \mapsto \SN$
to the spherical mixed $p$-spin model for 
\begin{equation}\label{def:nu-q}
\nu_q(x):=\nu(q+(1-q)x)-\nu(q)-(1-q) \nu'(q) x  \,.
\end{equation} 
By definition of $q_{\rm EA}(\b)$,   \emph{at any $\b<\infty$},  the model $\nu_{q_{\rm EA}(\b)}(\cdot)$ is
\abbr{rs} at $\b$,  \red{namely 
$\b \le \b_c^{\rm stat}(\nu_{q_{\rm EA}(\b)})$ (see \cite[(1.43)]{SubagFlandscape})}.  As a corollary of 
Proposition \ref{prop-stat},  \red{we show that} the shift-invariant Langevin 
dynamic \eqref{diffusion} starting from $\bxo$ sampled from the Gibbs measure $\widetilde{\mu}^N_{2\b,\BJ}$ 
has $O_N(1)$-band-relaxation time \red{if $\beta<\b_c^{\rm dyn}(\nu_{q_{\rm EA}(\b)})$ but not 
when $\beta>\b_c^{\rm dyn}(\nu_{q_{\rm EA}(\b)})$.}
\begin{cor}\label{cor:relax}
Consider for any strictly 1-\abbr{rsb} generic model or \abbr{rs} model, 
the shift-invariant Langevin dynamic \eqref{diffusion},  starting at  
$\bxo$ sampled from the Gibbs measure $\widetilde{\mu}^N_{2\b,\BJ}$,
setting $q_\b:=q_{\rm EA}(\b)$ and 
\begin{equation}\label{def:gamma-star2}
\gamma_\b := \frac{1}{2 (1 - q_\b)} - 2 \b^2 \nu'(q_\b) \,.
\end{equation} 
The empirical covariance $C_N(t+\tau,t)$
is given in the limit $N \to \infty$ followed by $\ell \to \infty$,  uniformly over bounded $t$ and $\tau$,  
by the stationary \abbr{fdt} solution 
\begin{equation}\label{eq:stat-rel}
c_{\gamma_\b}(\tau) \to c_{\gamma_\b}(\infty) \ge q_\b \,.
\end{equation}
\red{The model $\nu(\cdot)$ admits} $O_N(1)$-band-relaxation time if and only if $c_{\gamma_\b}(\infty)=q_\b$.
\red{Equality in \eqref{eq:stat-rel} holds} for 
$\b < \b_c^{\rm dyn}(\nu_{q_\b})$,  with a strict inequality for any $\b > \b_c^{\rm dyn}(\nu_{q_\b})$.
\end{cor}
\begin{remark}\label{rem:res-model}
\red{In view of \eqref{def:bc-dyn} and \eqref{def:nu-q},  having $\beta<\beta_c^{\rm dyn}(\nu_q)$ 
is equivalent to $g'_\b(x) \le g'_\b(q)$ for all $x \in [q,1]$.  Corollary \ref{cor:relax} deals with 
both high (\abbr{rs}-phase),  and low temperatures (specifically,  strictly $1$-\abbr{rsb} phase).  
In the former case $q_\b=0$
with transition from fast relaxation 
at the original  $\beta^{\rm dyn}_c$,   while in the latter case it is dictated by $\beta^{\rm dyn}_c$ 
of a new mixture $\nu_{q_{\rm EA}(\b)}(\cdot)$.  Indeed,  the latter is the spherical model for 
$H_{\BJ}(\cdot)-H_{\BJ}(\bxs)$,  conditioned
on $\cptq(\V)$ of \eqref{eq:cond-J},  at $\qs$ of \eqref{def:qs-beta} but without specifying that 
$H_{\BJ}(\bxo)=-N \Ep$ and restricted to the band ${\Bbb S}_{\bxs} (\qs)$ 
of \eqref{eq:subsphere} (which we further map to $\SN$).  Thus,  assuming that $\bx_t$ remains
within this band (which is related to what Corollary \ref{cor:relax} shows),  our dynamics
is effectively subjected to the model $\nu_{q_{\rm EA}(\b)}(\cdot)$.}
\end{remark}

\subsection{Organization}
\red{In Section \ref{subsec:reduc} we reduce Theorem \ref{thm-uncond} (with $\bxo$ chosen according 
to a Gibbs measure),  to the proof of Theorem \ref{thm-macro} where $\bxo$ is non-random.  
Section \ref{subsec:pf-macro} details the challenge and key steps in proving the latter theorem,  
to which Sections \ref{sec:ex}-\ref{sec:cont-lim} are devoted (among 
other things,  
Proposition \ref{prop:ell-lim} is also proved in Section \ref{sec:cont-lim}).
It is followed by the outline in Section \ref{subsec:gloss} 
of our notational conventions.
Section \ref{sec-asymp} has the proof of
Proposition \ref{prop-stat} and Corollary \ref{cor:relax}}.
In addition,  under a certain \abbr{fdt}-ansatz (as in \cite[Prop.  2.1]{DS21}),  
Proposition \ref{prop:fast-relax-band} shows \purple{that} 
$O_N(1)$-fast relaxation to a spherical band,  for $\bxo$ from the Gibbs measure at $\b_0 > \b_c^{\rm stat}$ and 
a different Langevin inverse-temperature parameter $\b>\b_c^{\rm dyn}$,  is plausible only for pure $p$-spins, 
while
Section \ref{subsec:fdt} analyzes the large time asymptotic of 
the dynamic $\bUsp(\cdot)$ in the \abbr{fdt} regime
(namely,  for $s=t+\tau$,  with $\tau$ fixed and $t \to \infty$),
starting at the Gibbs measure for $\b_0 \ne \b$.

\bigskip
\noindent {\bf Acknowledgment}
We thank Federico Ricci-Tersenghi for pointing out the relevance of \cite{Capone, FFRT1,FFRT2,FZ,Sun}.
We also thank Silvio Franz,  Reza Gheissari and Pierfrancesco Urbani,  for scientific discussions about 
the relation between our work and other references,  \red{and to the referees 
for their comments, which significantly improved the presentation of this paper.}
This research was partially supported by 
\abbr{NSF} grant \abbr{DMS}-2348142 (A.D.),   \abbr{ISF} grant 2055/21 (E.S.) and \abbr{ERC} 
grant PolySpin 101165541 (E.S.).

\section{Proof of Theorem \ref{thm-uncond}: reduction,  challenges and key-steps}\label{subsec:org}

\subsection{\purple{Reduction to non-random $\bxo$}}\label{subsec:reduc}
The key to the proof of Theorem \ref{thm-uncond} is our analog of \cite[Thm. 1.1]{DS21},  
with exponential tail decay of the distance $\widehat\err_{N,T}\big(\bxo,\bxs,H_{\BJ};\cdot)$ 
between $\bU_N=(C_N,\chi_N,q_N,\HN)$ and the limiting dynamic $\bUqf = (C,\chi,q,H)$,
for non-random $\bxo \in \SN$ and $\bxs \in \SNqs$,  when conditioning on $\cptq(\V')$ 
of \eqref{eq:cond-J} for the following $\V'$ values.
\begin{defi}\label{def:vB-eps}
For $\qs \in [0,1]$,  $\bxs \in \qs \SN$,  \red{$\epsilon>0$,  allowed
$\V=(\Ep,\Eg,\Gg ,\qo)$ as in Definition \ref{ft:cons} and the bands of  \eqref{eq:subsphere},  define the sets}
\begin{align}\label{eq:bar-B}
\vB_\epsilon( \V) &:= \big\{  \red{\V'=(\Epp,\Egp,\Ggp,\qop)} \; {\rm is} \; {\rm allowed} :  \|\V'-\V\|_\infty <\epsilon
\big\} \,,  \\
\vB_\epsilon(\bxs,\V) &:= 
\big\{(\bx, \V'):  \V' \in \vB_\epsilon(\V),  
\red{\bx \in {\Bbb S}_{\bxs} (\qop)} \big\} \,.
 \label{eq:bar-Bx}
\end{align}
\end{defi}

\begin{theo}\label{thm-macro}
Fix finite $\b,T > 0$,  non-random $\bxs \in \qs \SN$ for $\qs \in [0, 1]$,  some \red{allowed}
$\V$ and a mixture $\nu(\cdot)$ satisfying \eqref{eq:r-star}.  Consider the strong solution of 
\eqref{diffusion} with $f(\cdot)$ as in \eqref{eq:fdef} of locally Lipschitz derivative 
on $[0,r_\star^2)$ and with potential $H_{\BJ}$ conditional 
on the event $\cptq(\V)$ of \eqref{eq:cond-J}.\footnote{\label{ft:purecdn}
The conditional density of $\BJ$ is, 
up to normalization, the restriction of its original density to the appropriate affine subspace 
(with no change to the law of the independent $\BB$).}
Then,  for $\vB_\epsilon(\cdot,\cdot)$ of  \eqref{eq:bar-Bx},  any $\delta>0$ and small enough $\epsilon=\epsilon(\delta,\qs,\V)>0$,
\begin{align}\label{eq:exp-err-LT}
\limsup_{N \to \infty}  \sup_{(\bxo,\V') \in \vB_\epsilon(\bxs,\V)}
\frac{1}{N} \log \P \Big( \widehat\err_{N,T}\big(\bxo,\bxs,H_{\BJ};\bUqf(\V)\big) > \delta  \,  \big| \,
\cptq(\V') \Big) &<  0 \,.
\end{align}
\end{theo}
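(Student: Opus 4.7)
The plan is to extend the strategy used for the uniform case in \cite[Thm.~1.1]{DS21} by carefully handling the Gaussian conditioning imposed by $\cptq(\V')$. The argument has three main pillars: a Gaussian decomposition of the conditioned field, a concentration-of-measure control of the Itô-differentiated empirical observables, and a uniqueness/continuity analysis for the limiting integro-differential system.

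First I would write the conditional field as
\begin{equation*}
H_{\BJ}(\bx) \,\bigl|\, \cptq(\V') \;=\; \bar H_{\V',\bxo,\bxs}(\bx) \,+\, H^{\mathrm{res}}(\bx),
\end{equation*}
where $\bar H_{\V',\bxo,\bxs}$ is the conditional mean and $H^{\mathrm{res}}$ is an independent centered Gaussian field whose covariance is the Schur complement of the original covariance with respect to the three-dimensional (or four-dimensional, including radial gradient) conditioning subspace. A direct Gaussian projection using the covariance \eqref{eq:nudef} shows that $\bar H_{\V',\bxo,\bxs}(\bx)$ depends on $\bx$ only through the inner products $N^{-1}\langle\bx,\bxo\rangle$ and $N^{-1}\langle\bx,\bxs\rangle$, and its coefficients are obtained by inverting the matrix $\Sigma_\nu(\qo')$ of \eqref{def:w-s}. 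In particular, the function $\bv(x,y)$ appearing in \eqref{eqH-new}--\eqref{def:Aq-new} is exactly the evaluation of the conditional mean along these overlap variables, and its $x$- and $y$-derivatives produce precisely the extra drift contributions in $\widehat{\sf A}_C$ and $\widehat{\sf A}_q$. The matrix $L(s)$ comes from the component of $\bar H$ associated with the gradient-at-$\bxs$ condition via the normalization $\nu'(\qs^2)$ (the lower-right entry of $\Sigma_\nu$ modulo constants).

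Next I would expand, via Itô, the increments of $C_N$, $\chi_N$, $q_N$, $H_N$, substituting the decomposition above for $\nabla H_{\BJ}(\bx_s)$. The mean part $\nabla\bar H$ contributes drifts of the form already present in the limiting system of Definition \ref{def:limit-dyn-low-temp}, because one only needs to identify $N^{-1}\langle \bx_s,\bxo\rangle \to C_o(s)$ and $N^{-1}\langle \bx_s,\bxs\rangle \to q(s)$ in the limit (the $\qs^{-2}$ scalings match $\|\bxs\|^2 = \qs^2 N$). The residual part $\nabla H^{\mathrm{res}}$, being a centered Gaussian field whose covariance kernel differs from $N\nu(N^{-1}\langle\bx,\by\rangle)$ only by a deterministic low-rank perturbation, can be analyzed by the same tools as in \cite{BDG2,DS21}: Borell--TIS/Gaussian concentration on the sphere (or on $\B^N_\star$ after using the $f_\ell$-confinement guaranteed by Corollary \ref{cor:ex}) and sub-Gaussian estimates on normalized inner products $N^{-1}\langle \bx_s,\nabla H^{\mathrm{res}}(\bx_t)\rangle$. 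These yield the exponential-in-$N$ tail decay of the empirical functionals around their conditional means. A Lipschitz argument in $s,t \in [0,T]$ combined with a covering of $[0,T]^2$ upgrades pointwise to uniform-in-time concentration.

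Having exponential concentration, I would identify the limit. Using rotational invariance \eqref{eq:rot-inv} one may fix $\bxs \in \qs\SN$ and decompose every $\bx_s$ orthogonally into its components along $\bxs$, along $\bxo$ (modulo $\bxs$), and on the orthogonal complement; tracking the projections produces the overlap-dependent drifts. The closed system of equations then follows by matching the limiting drifts term-by-term with \eqref{eqRs-new}--\eqref{def:Aq-new}, and the normalization $L(s) = \nu'(\qs^2)^{-1} \int_0^s R(s,u)\nu'(q(u))\, du$ drops out from the gradient-conditioning block. Uniqueness of solutions to this nonlinear integro-differential system on $[0,T]^2$ follows by a Gronwall argument analogous to \cite[Sec.~4]{DS21}, using that $\nu',\nu''$ are Lipschitz on the relevant domain (justified by $|C|\le 1$, $|q|\le \qs$ from Remark \ref{rem:non-neg} and \eqref{eq:r-star}). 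Finally, to obtain uniformity over $(\bxo,\V') \in \vB_\epsilon(\V)$ I would show that both the conditional mean drift and the unique limit $\bUqf(\V')$ depend continuously on $\V'$ (via Lipschitz dependence of the coefficients of the integro-differential system on $\V'$), so that on $\vB_\epsilon(\V)$ the two limits $\bUqf(\V')$ and $\bUqf(\V)$ differ by $O(\epsilon)$, absorbed into the chosen $\delta$ by taking $\epsilon=\epsilon(\delta,\qs,\V)$ small.

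The main obstacle I expect is verifying uniform exponential concentration of the $\nabla H^{\mathrm{res}}$-driven inner products: conditioning changes the covariance by a low-rank but $\V'$-dependent perturbation, and one must ensure the Gaussian concentration constants are bounded uniformly over $\vB_\epsilon(\V)$. This needs the non-degeneracy of $\Sigma_\nu(\qo')$ proved in Lemma \ref{lem:app}(a), which fails in the pure $p$-spin boundary cases of Remark \ref{rem:w-unique} where one must first reduce the system before inverting. A secondary obstacle is the a.s.\ confinement of $\bx_t$ to $\B^N_\star$ under the conditioned measure, which is needed to apply the Lipschitz estimates on $\nu$ and $f'$; this uses \eqref{eq:fdef} together with control of the conditional mean's contribution to the radial drift, verifiable since $\nabla \bar H(\bx)$ grows only polynomially in $\|\bx\|/\sqrt{N}$ on $\B^N$.
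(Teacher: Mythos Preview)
Your approach correctly identifies the Gaussian decomposition $H_{\BJ}\,|\,\cptq(\V') = \bar H_{\V'} + H^{\mathrm{res}}$, the form of $\bv(x,y)$ via $\Sigma_\nu(\qo)$, and the role of continuity in $\V'$ for uniformity over $\vB_\epsilon(\V)$. These match the paper's setup (cf.\ Remark \ref{rem:DS21}, Lemma \ref{lem:app}). However, there is a genuine gap in your second step.

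You assert that the residual field $H^{\mathrm{res}}$ ``can be analyzed by the same tools as in \cite{BDG2,DS21}.'' This is precisely the step that fails for infinite mixtures, which Theorem \ref{thm-macro} must cover (generic $\nu$ is never a polynomial). The derivation of closed equations in \cite[Prop.~1.3]{BDG2} and \cite[Sec.~4]{DS21} relies on representing $\int_0^t k(\bx_u,\bx_s)\circ d\bz_s$ as a weighted sum of $\Fa_t$-adapted It\^o integrals; this works because for a \emph{finite} mixture the kernel $k(\bx,\by)$ is a multivariate polynomial, so the non-adapted factor in each monomial can be pulled outside the integral. For an infinite mixture this produces an infinite series of stochastic integrals whose tails are not controlled, and the paper explicitly flags this as a ``considerable difficulty'' (see the discussion in Section \ref{subsec:org} preceding Proposition \ref{prop-macro-easy}). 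The Borell--TIS concentration you invoke yields sub-Gaussian tails for individual inner products, but does not by itself deliver the closed limiting equations; that step requires the non-adapted calculus.

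The paper's route around this is a truncation layer that your outline omits: couple $H_{\BJ}$ with the finite-mixture fields $H^{[m]}_{\BJ}$ (Definition \ref{def:H-m-coupling}); show via Hessian-norm estimates (Lemma \ref{lem:G-conc}, Remark \ref{rem:trivial-norm-bd}) and the pathwise continuity of Proposition \ref{prop:cont} that replacing $H_{\BJ}$ by $H^{[m]}_{\BJ}$ changes $\widehat\err_{N,T}$ by $o_m(1)$ uniformly in $N$ and $\V'$; then transfer the conditioning from $\cptq^{[\infty]}(\V')$ to $\cpteq^{[m]}(\Ve')$ via Lemma \ref{lem:Cinf-to-Cm}. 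Only after this reduction to finite mixtures, and a further approximation of $H^{\bf 0}_{\BJ}$ by the simpler field $H^o_{\BJ}$ (conditioned only on $\cptq$, via Lemma \ref{lem:G-conc}(c)), does the paper invoke the \cite{BDG2,DS21} machinery, in Proposition \ref{prop-macro-easy}. A matching continuity result for the limiting dynamics in $\nu$ (Proposition \ref{prop:nu-cont}) closes the loop. Your outline needs this truncation layer, or else a genuinely new argument for the non-adapted stochastic integrals in the infinite-mixture case.
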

\begin{remark} \red{Our proof of Theorem \ref{thm-macro} can handle also} starting at a critical point 
$\bxo=\bxs$ (i.e. taking $\qo=\qs=1$,  $\Ep=\Eg$),  for which some qualitative information 
about the limit dynamics has been gained in \cite{BGJ} from an approximate evolution for (only) the pair 
$(\HN(s),|\nabla_{\mathrm{sp}} H_{\BJ}(\bxs)|/\sqrt{N})$.  \red{But,  excluding here $|\qo|=1$,  
allows us to avoid dealing with the extra degeneracy}\footnote{\label{ft:degen}\red{For $\qo=1$,  necessarily 
$\Ep=\Eg$,  while $\Ep=\pm \Eg$ if $\qo=-1$ with $\nu(\cdot)$ even or odd.}}
in this case.
\end{remark}

\purple{As we show next},  thanks to the general principle of \cite{DS24},  we get Theorem \ref{thm-uncond} as
an immediate consequence of Theorem \ref{thm-macro} and the following continuity property,  whose 
proof is deferred to Section \ref{sec:ex}.
\begin{lem}\label{lem:cont-varphi}
\red{Fix $T,\b,N,d$, }$\varphi_N,\psi^{(i)}_N \in C_0^2(\B^N)$,
$\bxo, \bxs \in \B^N(\sqrt{N})$ and non-random $\bU_\infty$.  \red{For any $\eta_n^{(i)} \to 0$,
}
\begin{equation}\label{eq:varphi-cont}
\red{\lim_{n \to \infty}}
 \widehat\err_{N,T}(\bxo,\bxs,\varphi_N+ \sum_{i=1}^{d} \red{\eta^{(i)}_n}  \psi^{(i)}_N;\bU_\infty)
=  \widehat\err_{N,T}(\bxo,\bxs,\varphi_N;\bU_\infty) \,.
\end{equation}
\end{lem}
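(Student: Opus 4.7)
The plan is to couple the perturbed and unperturbed Langevin diffusions via a common Brownian motion, establish pathwise uniform convergence of the paths, and then pass to the limit under the expectation by bounded convergence. For $\vec\eta = (\eta_1,\ldots,\eta_{d_N})$, set $\varphi_N^{\vec\eta} := \varphi_N + \sum_{i=1}^{d_N} \eta_i \psi^{(i)}_N \in C_0^2(\B^N)$ and let $\Bx^{\vec\eta}_\cdot$ denote the strong solution of \eqref{diffusion-phi} with initial value $\bxo$, potential $\varphi_N^{\vec\eta}$, and the same driving Brownian motion $\BB_\cdot$; write $\Bx^0_\cdot$ for the unperturbed ($\vec\eta = 0$) solution. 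Both are a.s.\ confined to $\B^N_\star$ by the Lyapunov argument underlying Corollary \ref{cor:ex}, which uses only the blow-up of $f'$ at $r_\star^2$ and the local boundedness of $\nabla\varphi_N^{\vec\eta}$. Subtracting the SDEs cancels the stochastic integrals, leaving the deterministic integral equation
$$\Bx^{\vec\eta}_t - \Bx^0_t = -\int_0^t \bigl[f'(\|\Bx^{\vec\eta}_u\|^2/N)\Bx^{\vec\eta}_u - f'(\|\Bx^0_u\|^2/N)\Bx^0_u\bigr]\,du - \b\int_0^t \bigl[\nabla\varphi_N^{\vec\eta}(\Bx^{\vec\eta}_u) - \nabla\varphi_N(\Bx^0_u)\bigr]\,du.$$

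Next I would fix $\omega$ in the a.s.\ event on which $\Bx^0_\cdot(\omega)$ exists on $[0,T]$ and remains in $\B^N_\star$, so that $r^*(\omega) := \sup_{t \le T}\|\Bx^0_t(\omega)\|^2/N < r_\star^2$. Picking any $r \in (r^*(\omega),r_\star^2)$, the maps $\bx \mapsto f'(\|\bx\|^2/N)\bx$, $\nabla\varphi_N(\bx)$, and $\nabla\psi^{(i)}_N(\bx)$ are all Lipschitz on $\overline{\B^N(\sqrt{rN})}$. Let $\tau^{\vec\eta} := \inf\{t : \|\Bx^{\vec\eta}_t\|^2/N \ge r\}$. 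Splitting $\nabla\varphi_N^{\vec\eta}(\Bx^{\vec\eta}_u) - \nabla\varphi_N(\Bx^0_u)$ as $[\nabla\varphi_N(\Bx^{\vec\eta}_u) - \nabla\varphi_N(\Bx^0_u)] + \sum_i \eta_i \nabla\psi^{(i)}_N(\Bx^{\vec\eta}_u)$, Gronwall's inequality on $[0, \tau^{\vec\eta}\wedge T]$ produces
$$\sup_{t \le \tau^{\vec\eta}\wedge T}\|\Bx^{\vec\eta}_t(\omega) - \Bx^0_t(\omega)\| \le C(\omega,r)\,\sum_{i=1}^{d_N}|\eta_i|,$$
with $C(\omega,r) < \infty$ collecting the Lipschitz constants over $\overline{\B^N(\sqrt{rN})}$. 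A bootstrap, taking $\sum_i|\eta_i|$ small enough that the right-hand side keeps $\Bx^{\vec\eta}$ strictly inside $\B^N(\sqrt{rN})$ on $[0,T]$, forces $\tau^{\vec\eta}(\omega) > T$, and hence $\sup_{t\le T}\|\Bx^{\vec\eta}_t(\omega) - \Bx^0_t(\omega)\| \to 0$ as $\vec\eta \to 0$.

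Once pathwise uniform convergence is in hand, the empirical functionals $C_N^{\vec\eta}(s,t)$, $\chi_N^{\vec\eta}(s,t)$, $q_N^{\vec\eta}(s)$ converge uniformly in $s,t \in [0,T]$ to their unperturbed counterparts (the first two using boundedness of the paths and of $\BB_\cdot$ on $[0,T]$), while $\widehat\varphi_N^{\vec\eta}(s) = -\varphi_N^{\vec\eta}(\Bx^{\vec\eta}_s)/N \to -\varphi_N(\Bx^0_s)/N$ uniformly in $s$ by continuity of $\varphi_N$ on $\overline{\B^N(\sqrt{rN})}$ together with $|\eta_i|\to 0$. Consequently the integrand defining $\widehat\err_{N,T}$ converges almost surely, and since each of its four $\wedge 1$ terms is bounded by $1$, the bounded convergence theorem delivers \eqref{eq:varphi-cont}.

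The main obstacle is the blow-up of $f'$ at the boundary of $\B^N_\star$, which precludes a single global Lipschitz constant and forces the localization through $\tau^{\vec\eta}$. This is handled cleanly by choosing the threshold $r$ strictly above the random running maximum $r^*(\omega)$ of the unperturbed path and closing the bootstrap using the Gronwall estimate above; all remaining steps are routine continuity and bounded convergence.
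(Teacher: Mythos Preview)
Your proof is correct and follows essentially the same strategy as the paper: couple the two diffusions through a common Brownian motion, localize to a compact ball on which the drift is Lipschitz, and apply Gronwall. The paper, however, organizes this into a separate quantitative continuity result (Proposition~\ref{prop:cont}), which bounds
\[
\sup_{\bU_\infty}\big|\widehat\err_{N,T}(\bxo,\bxs,\varphi_N^{(2)};\bU_\infty)-\widehat\err_{N,T}(\bxo,\bxs,\varphi_N^{(1)};\bU_\infty)\big|
\le \gamma_{T,\b}\big(\|\nabla\varphi_N^{(1)}\|_{\rm L},\tfrac{1}{\sqrt{N}}\|\nabla(\varphi_N^{(2)}-\varphi_N^{(1)})\|_\infty\big),
\]
and then obtains Lemma~\ref{lem:cont-varphi} as a one-line corollary. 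The paper's localization uses \emph{fixed} radii $r_k\uparrow r_\star$, works on the event $\Aa_k=\{\tau_k^{(1)}\wedge\tau_k^{(2)}>T\}$, and derives a deterministic Gronwall bound there, paying $4\,\P(\Aa_k^c)\to 0$ for the complement; you instead pick an $\omega$-dependent threshold above the unperturbed path's running maximum and close a bootstrap pathwise, finishing with bounded convergence. Your route is slightly more elementary for this lemma in isolation, while the paper's quantitative Proposition~\ref{prop:cont} is formulated so it can be reused verbatim several times in Section~\ref{sec:ex} (truncation to finite mixtures, changing the conditioning event, etc.), which is why they invest in it. One minor remark: the existence and confinement you invoke are really Proposition~\ref{prop:existence}(a), not Corollary~\ref{cor:ex}.
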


\begin{proof}[Proof of Theorem \ref{thm-uncond}]  \purple{We follow the outline in \cite[Sec. 2.1]{DS24}, } 
for the random Hamiltonian $H_{\BJ}$ on $\B^N_\star$ and 
$\bsigma$ drawn from the Gibbs measure $\widetilde{\mu}^N_{2\b_0,\BJ}$ on $\SN$.
\newline
(a).  For $\b_0 \le \b_c^{\rm stat}$,  the Gibbs measure is in the 
high-temperature regime of \cite[(1.6)]{DS24},  apart from the scaling factors 
$\frac{1}{N}\E[H_{\BJ}(\bsigma)^2]= \nu(1)$ throughout $\SN$ and having 
here $\b=2\b_0$.  Reflecting such scaling in our choice of $\Ep(\b_0)=2\b_0 \nu(1)$, 
we apply  \purple{\cite[Thm.  1.2]{DS24}} for $A=(\delta,\infty)$ and the a.s.  measurable functional 
$f_N(\bsigma) := \widehat \err_{N,T}\big(\bsigma,{\bf 0}, H_{\BJ};\bUof(\Ep(\b_0))\big)$ 
(which is defined in \eqref{eq:err-hat} in terms of the solution of \eqref{diffusion},  
with $f$ of \eqref{eq:fdef} and $\bxo=\bsigma$).  Indeed,  \purple{the measurability on 
$\SN \times \R$ of $\P(f_N(\bsigma) > \delta | H_{\BJ}(\bsigma) = -N E)$ is obvious, 
whereas} 
the exponential convergence condition \cite[(1.8)]{DS24} is provided in \eqref{eq:exp-err-LT}.
\newline
(b).  For generic $\nu(\cdot)$ which is strictly 1-\abbr{rsb} at $\b_0>\b_c^{\rm stat}$ we set
$\qs=\qs(\b_0)$ and $\V=\V(\b_0)$ as in Definition \ref{def:Gibbs-ic}
with $\bU_\infty:=\bUqf(\V)$ the corresponding $1$-\abbr{rsb} dynamics 
of Definition \ref{def:limit-dyn-low-temp}.  Recall the pure state 
decomposition $(\bx^\star_i,B_i)_{i \le d_N}$ of \cite[Prop. 3.1]{DS24}.
Fix an arbitrary point $\bx^\star_0 \in \qs \SN$ and set $B_0:= \SN \setminus \cup_i B_i$, 
so that the partition $(B_i)_{i=0}^{d_N}$ induces the measurable mapping 
$\bxs(\bsigma,H_{\BJ})=\bx^\star_i \in \qs \SN$.  With 
$\widetilde{\mu}^N_{2\b_0,\BJ}(B_0) \le \epsilon_N \to 0$, 
by \cite[(3.4) and (2), (5) of Prop. 3.1]{DS24}, 
one has that \eqref{eq:band-dec} holds for some $\delta_N \to 0$,  while
\eqref{eq:uncond-low} follows by applying \cite[Cor. A.1]{DS24} for $A=(\delta,\infty)$ and the functional
$\bar f_N(\bsigma,\bxs,\varphi_N) := \widehat\err_{N,T}(\bsigma,\bxs,\varphi_N;\bU_\infty)$
which is deterministic rotationally invariant,  see \eqref{eq:rot-inv}.  Indeed,   the required exponential
convergence of \cite[(A.2)]{DS24} is provided here by \eqref{eq:exp-err-LT},  the 
continuity in $\bxs$ of $\bar f_N(\bsigma,\bxs,\varphi_N)$ is obvious (see \eqref{eq:bnq},  \purple{\eqref{eq:err} and \eqref{eq:err-hat}),  while} Lemma 
\ref{lem:cont-varphi} provides the continuity in $\varphi_N$,  as required in \cite[(A.1)]{DS24}.
\end{proof}

\red{Towards the proof of Theorem \ref{thm-macro},  let $H^{\qo}_{\BJ}$ denote the centered field 
whose covariance been modified by conditioning 
on $\cptq(0,0,0,\qo)$.  Note that conditional on $\cptq(\V)$},  we have the representation
\begin{equation}\label{eq:H0-barH}
H_{\BJ}=H^{\qo}_{\BJ}+\bar{H}_{\V}\,,  \qquad \bar{H}_{\V} (\bx):=\E[H_{\BJ}(\bx) | \cptq(\V)]
\end{equation}
where,  see 
Lemma \ref{lem:app}(b),  
\begin{equation}\label{eq:bH-V}
\bar{H}_{\V} (\bx)
= - N \bv(x,y) \quad \hbox{at} \quad
x =  \frac{1}{N} \langle \bx, \bxs \rangle \quad \hbox{and} \quad y =  \frac{1}{N} \langle \bx, \bxo \rangle.
\end{equation}
Further,  Lemma \ref{lem:G-conc} shows that \red{$H^{\qo}_{\BJ}(\cdot)$ are} 
well approximated \red{for $|\qo|<1$},
by the centered field \purple{$\HJs(\cdot)$} whose covariance \purple{corresponds 
to} conditioning on 
\begin{align}\label{eq:cond-star}
\cptq  :=\Big\{ \nabla_{\rm sp} H_{\BJ}(\bxs)= {\bf 0} \Big\} \supset \cptq(\red{0,0,0,\qo}) \,.
\end{align}

\begin{remark}\label{rem:DS21}
The conditions imposed on the critical point $\bxs$ of $H_{\BJ}$ by 
$\cptq(\V)$ of \eqref{eq:cond-J},  match the conditioning taken in \cite[Thm.~1.1]{DS21},  but 
here we further specify that $H_{\BJ}(\bxo)=-N \Ep$ (as needed to match sampling $\bxo$ from the 
Gibbs measure).  As reasoned in \cite[Remark 1.3]{DS21},  also here the conditional law of 
$\bU_N$ is invariant under the rotation $(\bxo,\bxs) \mapsto (\BO \bxo,\BO \bxs)$ for any 
non-random orthogonal matrix $\BO$.  Thus,  \abbr{wlog}  \red{we set hereafter}  
$\bn=\qs \sqrt{N} \be_1$ and  \red{we can},  as in  \cite[Thm.  1.1]{DS21},
sample $\bxo$  in \eqref{eq:exp-err-LT} uniformly on the band
${\Bbb S}_{\bn} (\qop)$.  \red{Upon replacing $H^{\qo}_{\BJ}$ in \eqref{eq:H0-barH} by $\HJs$,  
we get},
as in \cite[(1.24)]{DS21},  that the covariance modification given $\cptq$ yields
the terms involving $L(\cdot)$ in Definition \ref{def:limit-dyn-low-temp},  which thus 
match those in \cite{DS21}.  In contrast,  \red{the extra conditioning on 
$H_{\BJ}(\bxo)$
affected} $\bar{H}_{\V}$,  with $\bv(x)$ of  \cite[(1.22)]{DS21} replaced here by $\bv(x,y)$.  Indeed,  
the limiting equations of \cite[Thm. 1.1]{DS21} coincide with $\bUqf(\V)$ 
whenever the specified value of $\Ep$ results with $w_1=w_4=0$ in \eqref{def:w-s}.
In particular,  for $\qo=\qs^2$ this amounts to setting $\Ep=\Eg$,  whereas
for $\qo=0$ it amounts to setting $\Ep=0$.
\end{remark}

\subsection{\purple{Proof of Theorem \ref{thm-macro}: Challenges and key-steps}}\label{subsec:pf-macro}
Section \ref{sec:ex} 
utilizes stochastic calculus to establish in Proposition \ref{prop:existence}
the existence of a strong solution of \eqref{diffusion-phi},
which 
in case of \eqref{diffusion} 
is further confined to $\B^N(r \sqrt{N})$ for some non-random $r \in (1,r_\star)$,   
up to an exponential in $N$ small probability.  In Proposition \ref{prop:cont} a
similar analysis yields the continuity in $\varphi_N$ of the solution of \eqref{diffusion-phi},  thereby establishing
Lemma \ref{lem:cont-varphi}.  Note that our approach to the \abbr{rsb}-Gibbs initial conditions 
of Theorem \ref{thm-uncond}(b) (via 
\cite[Thm. 1.4]{DS24}),  requires an a-priori precise pure state decomposition 
of $\widetilde{\mu}^N_{2\b_0,\BJ}$ in terms of \emph{thin spherical bands around 
critical points} (see \eqref{eq:band-dec}).  Apart from a few exceptions (e.g.  pure $p$-spins),  
this has only been established for generic mixtures and devising a proof for
Theorem \ref{thm-macro} for infinite mixtures,  is somewhat challenging.
Specifically,  though our proof of \cite[Thm. 1.1]{DS21} addresses a similar disorder dependent $\bxo$ 
(see  \red{Remark \ref{rem:DS21}}),  it can only handle 
finite mixtures.  Indeed,  in \cite{DS21} we adapt
the proof of \cite[Prop. 1.3]{BDG2},  where the key is to condition on $\Fa_t =\sigma(\bx_u, u \le t)$,  
and represent $\int_0^t k (\bx_u,\bx_s) \circ d \bz_s$ for $k (\bx,\by)=\E[\nabla H_{\BJ}(\bx) \nabla H_{\BJ}(\by)^\top]$ 
as a weighted sum of $\Fa_t$-adapted It\^o stochastic integrals 
\abbr{wrt} the continuous semi-martingale $\bz_s := \E[\BB_s | \Fa_s]$.  While 
$k(\bx_u,\bx_s)$ is neither $\Fa_s$-adapted nor of finite variation,  
\emph{for finite mixtures} its entries are multivariate polynomials in the components of 
$\bx_u$ and $\bx_s$,  thereby allowing us to push the non-adapted part of each monomial 
out of the stochastic integral (see \cite[Appendix A,  \& after (3.10)]{BDG2}).  To 
prove our analog of \cite[Prop. 1.3]{BDG2} beyond finite mixtures thus involves
the considerable difficulty of doing so with non-adapted stochastic calculus,  or the 
delicate task of controlling well enough the tails of the infinite 
series of adapted integrals one encounters.  Instead,  in Subsection \ref{subsec:finite-m}
we reduce the proof of Theorem \ref{thm-macro} to that of our next proposition, 
about finite mixtures and everywhere locally Lipschitz $f'(\cdot)$
(also with only one nominal value $\V'=\V$ and a more convenient conditioning by $\cptq$).
 \begin{prop}\label{prop-macro-easy}
Fix finite $\b,T > 0$,  a \emph{finite mixture} $\nu(\cdot)$,  non-random $\bxs \in \qs \SN$ 
for $\qs \in [0, 1]$ and some \red{allowed $\V$ as in Definition \ref{ft:cons}.}
Consider the strong solution of 
\eqref{diffusion} with $f'(\cdot)$ locally Lipschitz \red{of sufficient growth rate 
(as in  \cite[(1.6) \& (1.10)]{DS21}),} and potential \emph{$H^{\star}_{\BJ}+\bar{H}_{\V}$} 
for $\bar{H}_{\V}$ of \eqref{eq:bH-V} and the conditional field $H_{\BJ}^\star$ 
(given the event $\cptq$ of \eqref{eq:cond-star}).  
Then,  for any $\delta>0$,
\begin{align}\label{eq:exp-err-no-sup}
\limsup_{N \to \infty}  
\frac{1}{N} \log \P \Big( \widehat\err_{N,T}\big(\bxo,\bxs,H_{\BJ}^\star+\bar{H}_{\V} ;\bUqf(\V)\big) > \delta \Big) &<  0 \,.
\end{align}
\end{prop}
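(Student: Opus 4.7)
The plan is to adapt the strategy of \cite[Thm.~1.1]{DS21}, which itself extends \cite[Prop.~1.3]{BDG2}, to the present setting where the Langevin drift carries, in addition to $-\b\nabla H^o_{\BJ}(\bx_u)$, the explicit gradient of the deterministic potential $\bar H_{\V}(\bx)=-N\bv(x,y)$ with $x=\langle\bx,\bxs\rangle/N$ and $y=\langle\bx,\bxo\rangle/N$ as identified in \eqref{eq:bH-V}. First I would invoke Proposition \ref{prop:existence} to restrict to the high-probability event that the trajectory stays in $\B^N(r\sqrt{N})$ for some non-random $r\in(1,r_\star)$, so that $f'(\|\bx_u\|^2/N)$ is uniformly bounded on the retained paths, while the complementary event costs only an exponentially small probability. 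Because $\bar H_{\V}$ depends on $\bxo$ through $y$, I would enlarge the tracked vector from $(C_N,\chi_N,q_N,H_N)$ to also include the auxiliary overlap $C_{o,N}(s):=\tfrac{1}{N}\langle\bx_s,\bxo\rangle$, whose limit coincides with $C_o(s)=C(s,0)$ entering \eqref{def:vt-new}.

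The next step is to write It\^o semimartingale decompositions for this enlarged five-tuple. The explicit drift from $-\b\nabla\bar H_{\V}(\bx_u)=\b\bv_x(q_N(u),C_{o,N}(u))\bxs+\b\bv_y(q_N(u),C_{o,N}(u))\bxo$ produces, upon taking inner products against $\bx_t$, $\bxs$, and $\bxo$ respectively, precisely the terms proportional to $\bv_x(q(s),C_o(s))$, $\bv_y(q(s),C_o(s))$, and $\bv(q(s),C_o(s))$ appearing in \eqref{def:AC-new}, \eqref{def:Aq-new}, and \eqref{eqH-new}. The random drift $-\b\nabla H^o_{\BJ}(\bx_u)$ is then treated by conditioning on $\Fa_s=\sigma(\bx_u,u\le s)$, introducing $\bz_s=\E[\BB_s\mid\Fa_s]$ as a continuous semimartingale, and representing the cross-moments $\E[\nabla H^o_{\BJ}(\bx_s)\cdot\bx_t\mid\Fa_t]$ via It\^o integrals with respect to $\bz$, exactly as in \cite[Prop.~1.3]{BDG2}. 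Since $\nu$ is a \emph{finite} mixture, each entry of the kernel $k^o(\bx,\by):=\E[\nabla H^o_{\BJ}(\bx)\nabla H^o_{\BJ}(\by)^\top]$ is a multivariate polynomial in the coordinates of $\bx,\by$, so every non-adapted stochastic integral decomposes monomial-by-monomial into a finite sum of adapted integrals via the rearrangement trick of \cite[App.~A]{BDG2}. The rank-one covariance correction arising from conditioning on $\cptq$ contributes, as in \cite[(1.24)]{DS21}, exactly the $L(s)$ terms of \eqref{eqL} in \eqref{eqRs-new}--\eqref{def:Aq-new}.

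Once these finite-$N$ integro-differential identities are in hand, I would upgrade convergence to exponential-in-$N$ concentration by combining Gaussian concentration for Lipschitz functionals of the Gaussian field $H^o_{\BJ}$ (whose $C^2$-norm over $\B^N(r\sqrt{N})$ is uniformly controlled) with Bernstein-type tail bounds for the adapted It\^o integrals appearing in the decomposition, along the lines of the concentration arguments in \cite{DS21}. Uniqueness of the solution of the resulting closed five-dimensional limit system, obtained by a Gronwall argument as in \cite{DS21}, then identifies the limit point as $\bUqf(\V)$ and delivers \eqref{eq:exp-err-no-sup}.

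The main obstacle is the algebraic consistency between the conditioning at $\bxs$ and the deterministic drift $\bar H_{\V}$. The coefficients produced by the modified covariance of $H^o_{\BJ}$ enter through the matrix $\Sigma_\nu(\qo)$ of \eqref{def:w-s}, and are a priori separate from those coming from differentiating $\bv(x,y)$ through the representation \eqref{def:vt-new}. One must verify that the two groups of contributions add up in the $q_N$ and $C_N$ equations to exactly $\widehat{{\sf A}}_q(s)$ and $\widehat{{\sf A}}_C(s,t)$, with no residual cross-terms, leveraging the identity $\bar H_{\V}(\bx)=-N\langle\underline{\hat\bv}_\nu(x,y,z),\underline w\rangle$ from Lemma \ref{lem:app}(b) together with the explicit solution $\underline w$ of \eqref{def:w-s}. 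A secondary difficulty is keeping these algebraic manipulations quantitatively uniform in $N$, so as to preserve the exponential tails provided by Gaussian concentration; this is where restricting to finite mixtures is essential, since otherwise the monomial rearrangement of \cite[App.~A]{BDG2} would produce an infinite series whose tails must also be controlled.
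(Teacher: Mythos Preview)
Your outline matches the paper's approach (Section \ref{sec:DS21}), which adapts \cite[\S 3--4]{DS21} and \cite{BDG2} to the extra drift term $\bv_y(q_N(u),C_N(u,0))\,\bxo$ appearing in \eqref{313n-DS21}. Three refinements are worth noting. First, the paper tracks the full seventeen-function collection $\Ua_N$ of \cite[(3.2)--(3.8)]{DS21}, modifying only $H_N$, $Q_N$, $\bar D_N$ via \eqref{34n-DS21}--\eqref{36n-DS21}; a five-tuple alone does not close the system needed for the Gronwall step. Second, exponential concentration is obtained not by separating Gaussian concentration for $H^o_{\BJ}$ from Bernstein bounds on It\^o integrals, but by a single application of \cite[Lemma 2.5]{BDG2}: each $U_N(s,t)$ is shown to be Lipschitz in $(\bxo,\BJ,\BB)$ on the high-probability set $\La_{N,M}$ of \cite[(3.45)]{DS21}, and $\P_\star$ satisfies \cite[Hyp.~1.1]{BDG2} with $\alpha=2$. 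Third, the algebraic consistency you flag as the main obstacle is in fact routine: since $H^o_{\BJ}$ has exactly the covariance $\widetilde k$ of \cite[Lemma 3.7]{DS21} (both condition only on $\cptq$), the $L(s)$ terms of \cite{DS21} carry over unchanged, and the sole novelty is the explicit $\bv(x,y)$-drift from $\bar H_{\V}$, which feeds into $\widehat{{\sf A}}_C$ and $\widehat{{\sf A}}_q$ through the modifications \eqref{320n-DS21}--\eqref{321n-DS21} with no cross-terms to reconcile.
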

Beyond bounding various Gaussian conditional expectations (see Appendix \ref{sec:Appendix}),  
our reduction of Theorem \ref{thm-macro} to merely showing Proposition \ref{prop-macro-easy}
relies on the following coupling,  in order to approximate well the Langevin diffusion 
for the potential associated with an infinite mixture $\nu(\cdot)$,  
by such diffusions that correspond to the truncated,  finite mixtures $\nu^{[m]}(\cdot)$. 
\begin{defi}\label{def:H-m-coupling}
Fixing $1<r_\star<\bar{r}$,  realizing via \eqref{potential}
the mixed $p$-spin Hamiltonian 
$H_{\BJ}^{[\infty]}:=H_{\BJ}(\cdot)$ for a model $\nu(\cdot)$ satisfying
\eqref{eq:r-star}-\eqref{eq:nudef},   we jointly realize
also the mixed $p$-spin models with truncated finite mixtures 
\begin{equation}\label{eq:nu-m-def}
\nu^{[m]}(r) := \sum_{p=2}^m  b_p^2 r^p 
\end{equation}
and the models $\nu^{\Delta}(r):= \nu(r)-\nu^{[m]}(r)$,  as
\begin{equation}\label{potential-m}
  H^{[m]}_\BJ(\bx) :=\sum_{p=2}^m b_p H^{(p)}_{\BJ} (\bx)  \quad \hbox{and} \quad 
  H^{\Delta}_\BJ(\bx):= H_\BJ(\bx)-H^{[m]}_\BJ(\bx)\, , \quad \hbox{respectively.}
\end{equation}
\end{defi}

Section \ref{sec:DS21} proves Proposition  \ref{prop-macro-easy} by an adaptation of \cite[\S 3 and 4]{DS21},   
to yield the required $e^{-\Omega(N)}$-decay of error probabilities.  This is supplemented by 
Section \ref{sec:cont-lim} where we re-run arguments from \cite[\S 6]{DS21} to establish 
various properties of the limit dynamics,  such as Proposition \ref{prop:ell-lim},  and the 
following continuity of the limit dynamic $\bUqf(\V)=\bUqf(\V;\nu)$ of Definitions \ref{def:limit-dyn-low-temp} 
and \ref{def:limit-dyn-high-temp} \abbr{wrt} the model mixture $\nu(\cdot)$ (which is part of 
our reduction of Theorem \ref{thm-macro} to Proposition \ref{prop-macro-easy}).
\begin{prop}\label{prop:nu-cont}
Fix finite $\b,T$,  $1<r_1<\bar{r}$,  $f'(\cdot)$ globally Lipschitz on $[0,r_1^2]$
and models $\nu$,  $\nu^{[m]}$ as in Definition \ref{def:H-m-coupling}.  For $\qs \in [0,1]$ and
\red{$\V$ as in Definition \ref{ft:cons}, } the corresponding limiting dynamics satisfy
\begin{align}
\label{eq:m-cont-Uf}
\limsup_{m \to \infty} \red{\| K^{[m]} \|_T} < r_1^2 \quad \Longrightarrow \quad 
\lim_{m \to \infty} \| \,  \bUqf(\V;\nu^{[m]}) - \bUqf (\V;\nu) \,\|_{T} &=0 \,.
\end{align}
\end{prop}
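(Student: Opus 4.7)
The plan is to prove convergence via a Gronwall-type comparison, viewing the system \eqref{eqRs-new}--\eqref{eqH-new} together with \eqref{eqfZs} as a coupled nonlinear Volterra system in the unknowns $(C,R,q,H,K,\mu)$ whose coefficients depend on $\nu,\nu',\nu''$ (evaluated at $C(s,u)$ or $q(s)$) and on the linear functional $\bv$ defined in \eqref{def:vt-new}--\eqref{def:w-s}. First, by the hypothesis we may fix $r_2<r_1$ with $\sup_{m\ge m_0}\sup_{s\le T} K^{[m]}(s)\le r_2^2$ for all $m\ge m_0$ and, by passing to the limit, also $\sup_{s\le T} K(s)\le r_2^2$. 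The non-negative definiteness of $C^{[m]}$ (Remark \ref{rem:non-neg}) gives $|C^{[m]}(s,t)|\le \sqrt{K^{[m]}(s)K^{[m]}(t)}\le r_2^2<\bar r^2$, and $|q^{[m]}(s)|\le \qs\le 1$. Thus the arguments at which we evaluate the mixtures stay in a fixed compact subinterval of $(-\bar r^2,\bar r^2)$ on which $(\nu^{[m]})^{(k)}\to\nu^{(k)}$ uniformly for $k=0,1,2$ (by \eqref{eq:r-star}) and each $\nu^{(k)}$ is Lipschitz.

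Next, I would establish uniform-in-$m$ a priori bounds on $(R^{[m]},C^{[m]},q^{[m]},H^{[m]},\mu^{[m]},K^{[m]})$ over $[0,T]^2$. The identity $\mu^{[m]}(s)=f'(K^{[m]}(s))$ combined with the Lipschitz bound on $f'$ over $[0,r_1^2]$ yields a uniform bound on $\mu^{[m]}$; the Volterra equation \eqref{eqRs-new} together with $R^{[m]}(t,t)\equiv 1$ and Gronwall yield a uniform bound on $R^{[m]}$; and the remaining bounds follow from the integral equations \eqref{eqCs-new}--\eqref{eqH-new} using the uniform bound on $\nu^{[m]},(\nu^{[m]})',(\nu^{[m]})''$ over $[-r_2^2,r_2^2]$. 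Note also that the matrix $\Sigma_{\nu^{[m]}}(\qo)$ of \eqref{def:w-s} converges to $\Sigma_\nu(\qo)$, and in particular (by Lemma \ref{lem:app}(a), or in the pure $p$-spin case via Remark \ref{rem:w-unique}) the inverses stabilize, so $\bv^{[m]},\bv^{[m]}_x,\bv^{[m]}_y\to\bv,\bv_x,\bv_y$ uniformly on $[-r_2^2,r_2^2]\times[-r_1,r_1]$.

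Now set the differences $\Delta_m^C:=C^{[m]}-C$, $\Delta_m^R:=R^{[m]}-R$, $\Delta_m^q:=q^{[m]}-q$, $\Delta_m^H:=H^{[m]}-H$, $\Delta_m^\mu:=\mu^{[m]}-\mu$ and $\Delta_m^K:=K^{[m]}-K$. Subtracting each of the equations \eqref{eqRs-new}--\eqref{eqH-new} (with superscript $[m]$) from the corresponding one for $\nu$ and splitting every integrand in the standard fashion yields three kinds of terms: (i) a pure coefficient-difference term $[(\nu^{[m]})^{(k)}-\nu^{(k)}](C^{[m]}(s,u))$ whose sup-norm is $o_m(1)$ by the uniform convergence above; (ii) a Lipschitz-controlled term $\nu^{(k)}(C^{[m]})-\nu^{(k)}(C)$ bounded by $\mathrm{Lip}(\nu^{(k)};[-r_2^2,r_2^2])\,\|\Delta_m^C\|_T$; and (iii) differences of products of uniformly bounded factors that are linear in the various $\Delta_m^{\bullet}$. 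The ODE \eqref{eqfZs} likewise integrates to give $\|\Delta_m^K\|_T\le \int_0^T(\cdots)\,du$ with integrand linear in the $\Delta_m^{\bullet}$, and $|\Delta_m^\mu(s)|\le \mathrm{Lip}(f';[0,r_1^2])|\Delta_m^K(s)|$. Setting $\Phi_m(T'):=\sup_{s,t\le T'}\{|\Delta_m^C(s,t)|+|\Delta_m^R(s,t)|+|\Delta_m^q(s)|+|\Delta_m^H(s)|+|\Delta_m^\mu(s)|+|\Delta_m^K(s)|\}$, one obtains an integral inequality $\Phi_m(T')\le \varepsilon_m+C_T\int_0^{T'}\Phi_m(u)\,du$ with $\varepsilon_m\to 0$, and Gronwall gives $\Phi_m(T)\to 0$, which is exactly \eqref{eq:m-cont-Uf}.

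The main obstacle is closing this Gronwall loop in the presence of the self-consistent coupling between $(K^{[m]},\mu^{[m]})$ via \eqref{eqfZs} and $R^{[m]}$ via \eqref{eqRs-new}: the factor $e^{-\int_t^s \mu^{[m]}(v)\,dv}$ implicit in the solution of \eqref{eqRs-new} depends on $\mu^{[m]}$, which in turn depends on $K^{[m]}$, which depends through $\widehat{\sf A}_C^{[m]}(s,s)$ on the full tuple. One must therefore first use the uniform bound $\mu^{[m]}\le\mathrm{Lip}(f';[0,r_1^2])\,r_1^2+\|f'(0)\|_\infty$ to get an $m$-independent Gronwall constant for $R^{[m]}$, and only afterwards treat $\Delta_m^\mu$ as a perturbation controlled by $\Delta_m^K$. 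A secondary subtlety is that \eqref{eqCs-new} is an ODE in $s$ with $t$ frozen; following \cite[\S 6]{DS21} I would first control the diagonal using $C^{[m]}(s,s)=K^{[m]}(s)$ and then propagate off-diagonal using the uniform response bound, to obtain sup-norm estimates over the full rectangle $[0,T]^2$.
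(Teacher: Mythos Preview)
Your overall strategy — subtract the two systems, bound each difference by a Volterra-type inequality, and close via Gronwall — is sound and is in fact the standard alternative to what the paper does.  The paper instead argues by compactness: it first establishes uniform bounds placing all $(\mu^{[m]},R^{[m]},C^{[m]},q^{[m]})$ in a fixed closed set $\Lambda_{\kappa,r_1}$, then uses those bounds together with the integrated equations to get equicontinuity, applies Arzel\`a--Ascoli to extract a limit point, checks that any such limit point satisfies the $\nu$-equations, and finally invokes the uniqueness result of Proposition~\ref{uniqueness}(a) to identify that limit with $\bUqf(\V;\nu)$.  Your direct Gronwall is arguably more elementary and gives an explicit rate; the compactness route is more robust and, crucially, simultaneously \emph{constructs} the $\nu$-dynamics.

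That last point is exactly where your argument has a genuine gap.  You write ``by passing to the limit, also $\sup_{s\le T}K(s)\le r_2^2$'', but at that stage you have not yet shown $K^{[m]}\to K$; the implication you are trying to prove is precisely that convergence.  Worse, for an \emph{infinite} mixture $\nu$ the very existence of $\bUqf(\V;\nu)$ is not given a priori in the paper --- Proposition~\ref{uniqueness}(a) provides only uniqueness, and existence for infinite mixtures is obtained in the paper \emph{through} Proposition~\ref{prop:nu-cont} itself (see the opening sentence of the proof of Proposition~\ref{prop:ell-lim}).  So your Gronwall compares the $\nu^{[m]}$-solution to an object whose existence and confinement to $[-r_2^2,r_2^2]$ are not yet available, making the argument circular as written.

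There are two standard repairs.  Either (i) first prove local-in-time existence for the $\nu$-system directly (e.g.\ Picard iteration on the integral form \eqref{eqRs-m}--\eqref{eqqs-m}), then run your Gronwall up to the stopping time $\tau_\star=\inf\{s:K(s)\ge r_2^2\}$ and use the resulting convergence $K^{[m]}(\tau_\star)\to K(\tau_\star)$ together with the uniform bound on $K^{[m]}$ to force $\tau_\star\ge T$; or (ii) bypass the $\nu$-solution entirely by running your Gronwall between $\bUqf(\V;\nu^{[m]})$ and $\bUqf(\V;\nu^{[m']})$, obtain a Cauchy sequence, and only then identify the limit.  Either fix is routine; the paper's compactness argument simply sidesteps the issue.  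A minor additional point: your Lipschitz control of $\nu''$ requires a uniform bound on $\nu'''$ on $[-r_2^2,r_2^2]$, so you need $C^3$-convergence of $\nu^{[m]}\to\nu$ there (the paper records $\|\nu^{[m]}-\nu\|_\star^{(3)}\to 0$), not just $C^2$.
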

\begin{remark}\label{rmk:Uf-bd}
\red{Here $K^{[m]}(s)=C^{[m]}(s,s)$ is given by \eqref{eqfZs} for the model
$\nu^{[m]}$ and} at the end of  Section \ref{sec:ex} we show that the \abbr{lhs} of \eqref{eq:m-cont-Uf} holds
for any locally Lipschitz $f'(\cdot)$ satisfying \eqref{eq:fdef} and for some $r_1<r_\star$.
\end{remark}

Many arguments in \cite{BDG2,DGM,DS21} take advantage of having $f'(\cdot)$ Lipschitz 
on compacts.  However,  for the a.s.  global existence of solutions for \eqref{diffusion} with mixtures having
finite radius of convergence $\bar r^2$,  we must have $f'(r) \uparrow \infty$ at some $r_\star^2 \le \bar r^2$.  
Nevertheless,  with $r_\star<\bar r$,  it is relatively easy to confine such solutions
to $\B^N(r_1 \sqrt{N})$ for some $r_1<r_\star$,  whereupon $f'(\cdot)$ 
outside this set no longer matters,  and the relevant arguments 
of \cite{BDG2,DGM,DS21} thus apply in dealing with the truncated (finite) mixtures, 
or with the limiting dynamics.

\subsection{\red{Notational conventions}}\label{subsec:gloss}
Large parts of this paper involve adapting and citing results from \cite{DS21},  which in turn builds on 
\cite{BDG2}.  To make this as seamless as possible,  we adopt many of the notational conventions of
\cite{BDG2,DS21}.  Specifically,  we use throughout $\BB_t$,  $\BW_t$ and 
$W_t$ for the $N$-dimensional and $1$-dimensional Brownian motions,  
$\bx_t=(x_t^i)_{1 \le i \le N} \in \R^N$ for the Langevin path
at inverse-temperature parameter $\beta$,  subject to the Hamiltonian $H_{\BJ}(\bx)$ of mixture $\nu(\cdot)$,  
which we represent via the independent centered Gaussian vector $\BJ=\{J_{(i_1,i_2,\ldots,i_p)}\}$.
Similarly,  $\qs$ denotes the (rescaled) norm of the critical point $\bxs$ at the center of the 
relevant spherical bands ${\Bbb S}_{\bxs} (\cdot) \subset \SN$ with the corresponding parameters 
vector $\V$
consisting of the specified energy $\Ep$ at $\bxo$ together with
$(\Eg,\Gg,\qo)$ of the conditioning event $\cpt(\cdot)$ from \cite[Thm.1.1]{DS21} (which 
amounts to our $\cptq(\V)$ except for not specifying the energy $\Ep$).  Indeed,  our gradient-covariance kernel 
is the same $k(\cdot,\cdot)$ of \cite{BDG2} when $\qs=0$,  and otherwise it is $\widetilde{k}(\cdot,\cdot)$ of \cite{DS21}.  We also deal here with the limit as $N \to \infty$ of same random functions $(C_N,\chi_N,q_N)$,  
amended in the sequel by  $(K_N,D_N,E_N,A_N,F_N,Q_N,V_N)$,  whose meaning is exactly as in \cite{DS21}.  
While doing so,  as in \cite{DS21} we denote the collection of all relevant functions by $\Ua_N$,  
using $U_N$ for a ``generic" function and $\bar{U}_N$ when the coordinate averaging in $U_N$ excludes
the one aligned with the normalized $\hbxs(=\be_1)$.  Going back to \cite{BDG2},  we use $U_N^a$ for 
the expectation of such a function,  $\widehat{U}_N$ for its expectation 
given $\Fa_t = \sigma(\bx_u,  u \le t)$,  omitting the subscript $N$ when referring to its limit (point) as $N \to \infty$.
Unlike \cite{BDG2,DS21} our forces and Hamiltonians blow-up,  respectively,  
for $\bx$ at the boundary of open balls $\B^N_{\star} \subset \B^N$ of normalized radii $1<r_\star<\bar r$,  
with $\{\bx_t: t \in [0,T]\}$ confined,  up to $e^{-\Omega(N)}$-probability,  to a smaller ball,  parametrized by $1<r_1<r_\star$.  Here $\bxo$ is sampled from the Gibbs measure at parameter $2 \beta_0$,  to which 
correspond
free and ground state energies $F(\beta_0)$ and $\GS(\cdot)$, 
critical parameters $\beta_c^{\rm dyn} < \beta_c^{\rm stat}$,  
a Parisi measure $\zeta_P$ and its associated Edwards-Andreson parameter $q_{\rm EA}$.
Treating such Gibbsian $\bxo$
via the approach of \cite{DS24},  requires us to  
deal with Langevin system \eqref{diffusion-phi} of general potential $\varphi_N$ on $\B^N$,  possibly perturbed by
small linear combinations of some potentials $\{\psi_N^{(i)},  i \le d\}$.  We use $\varphi_N^{(1)}$
and $\varphi_N^{(2)}$ when comparing such potentials and 
$\widetilde{\varphi}_N(s)$ for the normalized potential along the Langevin path,  
specialized to $\widetilde{H}_N(s)$ in case $\varphi_N=H_{\BJ}$.
In addition,  $\|\cdot\|_\infty$ and $\|\cdot\|_L$ denote the supremum and Lipschitz 
norms of such $\varphi_N$,  its gradient ($\nabla \varphi_N$) or Hessian 
(${\rm Hess}(\varphi_N)$).  We likewise have 
$\|g\|_\infty^{(r)}$,  $\|g\|_L^{(r)}$ for functions $g:([-r^2,r^2])^d \to \R$,  with
$\|g\|^{(k)}_\star$ a norm for the uniform convergence of $g$ and its first $k$ partial derivatives
(so $\|g\|_\star:=\|g\|_\star^{(0)}=\|g\|_\infty^{(r)}$),  while $\|\cdot\|_T$ denotes the supremum norm on $[0,T]^d$.
Similarly,  $\|\cdot\|_\infty$ and $\|\cdot\|$,  denote 
the supremum and Euclidean norms on $\R^{N'}$,  respectively.  The approach of \cite{DS24} also relies on continuity
with respect to conditioning,  replacing criticality of $H_\BJ$ at $\bxs$ with specified values at 
$(\bxo,\bxs)$ and $\sp\{\bx_0,\bx_\star\}$-directional derivatives at $\bxs$,  by event 
$\cpteq(\Ve)=\cpt_{\qs}^{{\rm e},[\infty]}(\Ve)$,  
where the $(N+2)$-dimensional relevant Gaussian vector $\Ha=\Ha^{[\infty]}=(\hat{\Ha},\Ha_\perp)$ matches some
specified $\Ve=(\hat{V},{\bf u}) \in \R^4 \times \R^{N-2}$ (which in turn 
is in an $\epsilon$-neighborhood of $(\Ep,\Eg,\Gg,0) \times \{\bf 0\}$).
A generic limit of
$(C_N,\chi_N,q_N,\widetilde{H}_N)$ is denoted by $\bU_\infty$,  using $\err_{N,T}(\cdot;\bU_\infty)$ for 
a $\|\cdot\|_T$-type distance from such a limit,  per path $\bx_\cdot$,  critical point $\bxs$ and potential. 
We denote by $\widehat{\err}_{N,T}(\cdot;\bU_\infty)$ the expectation of such distance over the
Brownian motion and the induced path (which thus depends only on $\bxo$,  $\bxs$ and the potential),
with $\Delta \widehat{\err}^{[m]}_{N,T}(\BJ)$ the maximum over $\bU_\infty$ error in the latter quantity,  
when replacing $H_{\BJ}$ by its finite mixture approximation $H_{\BJ}^{[m]}$.
Reserving $\bUqf(\V;\nu)=\bU_\star$
for the limit vector corresponding to mixture $\nu$,  allowed parameters $\V$ of $\qs$ 
and a given choice of $f(\cdot)$,  leads to $\bUsp(\V;\nu)$ as $\ell \to \infty$, 
where our forces $f_\ell(\cdot)$ replace the $f_L(\cdot)$ of
\cite{DGM,DS21},  while adopting their convention of indicating by $U^{(\ell)}$ the dependence on $f_\ell$
of the limit of some generic $U_N$.  In detailing $\bUqf$ and $\bUsp$ we borrow the notations 
$\mu(s)$,  $\psi(r)$ and $\bv(\cdot)$ from \cite{DS21},  but seeking a more concise description of 
those limits,  we also set $C_o(s)=C(0,s)$,  introduce the $4$-dimensional matrices $\Sigma_\nu$ 
and lump certain terms into new functions $L(s)$,  
$\underline{{\sf A}} = ({\sf A}_R,{\sf A}_C,{\sf A}_q,{\sf A}_H,{\sf A}_K,{\sf A}_o)$
and the corresponding integrals $\widehat{{\sf A}}_q (s)$,  $\widehat{{\sf A}}_C (s,t)$.
In deriving properties of $\bUsp$ we re-use notations of \cite[Sec. 2]{DS21},  
such as $(R_{\fdt},C_{\fdt})$,  $\kappa_1$,  $\kappa_2$,  $\alpha$,  $\gamma$,  $\theta(x)$, 
or modified ones (such as $\mu_\star$ and $\phi_\gamma$
instead of $\mu$ and $\phi$,  or $c_\gamma(\infty)$ instead of $D_{\infty}$).
With some abuse of notation we use $\tau_k=\tau_{r_k}$ and $\tau_c$ for
first hitting times in terms of the norm of a stopped version
$\bx^{\langle k \rangle}_t$ of $\bx_t$,  likewise denoting by
$\hat{\tau}_k=\hat{\tau}_{r_k}$ and $\hat{\tau}_r^{(\pm)}$ such hitting times 
for certain bounding scalar diffusions.  Throughout the paper,  superscript $[m]$ indicates objects determined 
by the approximating $\nu^{[m]}$,  such as
$\Ha^{[m]}$, $\bx_t^{[m]}$, $\cptq^{[m]}$,
$\cpteqm$, $\bar H^{[m]}_{\Ve}$,
$H_\BJ^{{\qo},[m]}$,  $H^{\star,[m]}_{\BJ}$,
$\tau_r^{[m]}$, $\tau_r^{\star,[m]}$,
$(\mu^{[m]},\bv^{[m]},R^{[m]},C^{[m]},q^{[m]},K^{[m]},H^{[m]})$,  
with superscript $\Delta$ for the corresponding differences (e.g.  $\nu^{\Delta}$,  $H_{\BJ}^\Delta$,  
$\Ha^{\Delta}=(\hat{\Ha}^\Delta,\Ha^\Delta_{\perp})$).
We represent $H_{\BJ}$ given $\cpteq(\Ve)$ as the sum of non-random $\bar{H}_{\Ve}$ and centered field 
$H^{\qo}_{\BJ}$,  likewise splitting $H_{\BJ}^{[m]}$ given $\cpteqm$ and use 
$\widehat{H}_\BJ^{{\qo},[m]}$,  $\widehat{H}^{[m]}_{\Ve}$ when splitting $H_{\BJ}^{[m]}$ given $\cpteq(\Ve)$.
In analyzing finite mixtures we utilize the disorder sup-norm $\|\BJ\|^N_\infty$ of \cite{BDG2}.  Then,
approximating $H^{\qo}_{\BJ}$ by $H_{\BJ}^\star$ induces
on the space $\Ea_N$ of triplets $(\bxo,\BJ,\BB)$ the law $\P_\star$ of \cite{DS21},  
so as in \cite{BDG2},  the elements of $\Ua_N$ are $O(\frac{1}{\sqrt{N}})$-Lipschitz 
under the weighted Euclidean norm $\|\cdot\|$ on $\La_{N,M} \subset \Ea_N$ whose
complements have $e^{-\Omega(N)}$ small probability.

\section{Stationarity and relaxation,  \abbr{fdt} regime and localized states with no-aging}\label{sec-asymp}

\red{Section \ref{subsec:stat-pf} provides proofs for Section \ref{subsec:stat},  while Sections \ref{subsec:fdt} 
and \ref{subsec:no-aging} analyze the \abbr{fdt} regime and the possible localized states with no-aging,  
respectively}.

\subsection{Proofs of Proposition \ref{prop-stat} and of Corollary \ref{cor:relax}}\label{subsec:stat-pf} 

\begin{proof}[Proof of Proposition \ref{prop-stat}] Starting with $\qs>0$ recall 
the notations $\qo=\alpha \qs$, $b_\alpha=\nu'(\qo)/\nu'(\qs^2)$ and
consider stationary dynamics of the form $\bUsp(\cdot)$.
With $C(s,s)=1$,  plugging \eqref{eqZs-new} into \eqref{eqCs-new} at $s=t$, yields that
\begin{equation}\label{eq:cp-diag}
\partial_s C(s,t) \Big\vert_{s=t} = - \frac{1}{2} \,.
\end{equation}
Such stationary dynamics are of the form $C(s,t)=c(s-t)$,  $R(s,t)=r (s-t)$,  $q(s)=\qo$ and $H(s)=H(0)$,
\red{with $c(0)=r(0)=1=-2c'(0)$,  in view of \eqref{eq:str-limit} and  \eqref{eq:cp-diag}.  Further,  
from \eqref{eqRs-new} at $t=s$ we deduce that $r'(0) = - \mu(s) r(0)$,  so
necessarily $\mu(s)=\mu_\star$ is independent of $s$,  while} from \eqref{eqL} we have that 
\[
L(s)=b_\alpha \int_0^s r(u) du \,,   \qquad \forall s \ge 0 \,.
\]
From \eqref{eqH-new}, for such dynamics we have that for all $s \ge 0$,
\begin{align*}
H(s) = \b  \int_0^s r(u) [\nu'(c(u)) -b_\alpha \nu'(\qo) ] \, du +  \bv (\qo,c(s)) \,. 
\end{align*}
Our requirement of having $H'(s)=0$, thus amounts to 
\begin{equation}\label{eq:H-cnst}
 \b r(s) [\nu'(c(s))-b_\alpha \nu'(\qo)] = - c'(s) \bv_y(\qo,c(s)) \,.
\end{equation}
If $|\alpha|=1$ then \red{$\bxs=\qo \bxo$ so $q_N(s)=C_N(s,0)$ for all $N$ and $s \ge 0$.  It follows that
$q(s) = \qo C(s,0)$,  so any stationary dynamic must then have} 
$c'(0)=0$,  which we have already ruled out.  Proceeding hereafter with $|\alpha|<1$, 
recall \eqref{def:w-s} that $w_4 = - b_\alpha \sqrt{\qs^2 -\qo^2} \, w_1$,  hence from \eqref{def:vt-new} 
we get that 
\begin{align}\label{eq:bv-y}
\bv_y(\qo,c(s)) = w_1 [ \nu'(c(s)) - b_\alpha \nu'(\qo) ] \,.
\end{align}
Since $c'(0) \ne 0$ and $r(0)=-2 c'(0)$, upon plugging \eqref{eq:bv-y} in \eqref{eq:H-cnst} 
we deduce that $H(s)=H(0)$ if and only if 
\begin{equation}\label{eq:r-c-L}
r(s) = - 2 c'(s) \,, \qquad L(s) = -2 b_\alpha (c(s)-1) \,, \qquad  w_1 = 2 \b \,.
\end{equation}
Now \eqref{eqRs-new} reads as
\[
c''(\tau) = - \mu_\star c'(\tau) - 2 \b^2 \int_0^\tau c'(\tau-v) c'(v) \nu''(c(v)) dv \,,
\]
and upon integrating both sides, necessarily
\begin{equation}\label{eqCs-fdt}
c'(\tau) = - \mu_\star c(\tau) - 2 \b^2 \int_0^\tau c(\tau-v) c'(v) \nu''(c(v)) dv + 2 \b^2 c(0) \nu'(c(\tau)) + \gamma - \frac{1}{2} \,,
\end{equation}
where since $c(0)=1$ and $c'(0)=-1/2$, we also know that
\begin{equation}\label{eq:gamma-mu}
\gamma = \mu_\star - 2 \b^2 \nu'(1) \,.
\end{equation}
Integrating by parts in \eqref{eqCs-fdt}, it is easy to verify that $c(\cdot)$ 
must then be the unique $[0,1]$-valued, continuously differentiable solution 
$c_{\gamma}(\cdot)$ of \eqref{FDTDb-new}.  Further,  plugging \eqref{def:AC-new}
in \eqref{eqCs-new} at $s=t+\tau$,  and substituting
the formulas for stationary dynamics $(R,C,q,H)$ as above,  results with 
\begin{align}\label{eqCs-fdt-alt}
c'(\tau)= & - \mu_\star c(\tau) - 2 \b^2 \int_0^\tau c(\tau-v) c'(v) \nu''(c(v)) \, dv 
- 2 \b^2 \int_0^t \frac{d}{dv} [c(v) \nu'(c(v+\tau)]
 \, dv \nonumber \\ 
 & + 2  \b^2 \qo \nu''(\qo) b_\alpha (c(s)-1) + 2 \b^2 \nu'(\qo) b_\alpha (c(t)-1)
+ \b \qo \bv_x (\qo,c(s)) + \b c(t) \bv_y (\qo,c(s)) \,.
\end{align}
Carrying out the second integration in \eqref{eqCs-fdt-alt} and plugging there \eqref{eq:bv-y},  
leads to 
\eqref{eqCs-fdt} if and only if for any $s \ge 0$, 
\begin{align}\label{eqCs-fdt-id}
\gamma-\frac{1}{2} =&  \b \qo [\bv_x (\qo,c(s)) +2 \b \nu''(\qo) b_\alpha c(s) ] - 2 \b^2 \psi(\qo) b_\alpha
\,.
\end{align}
Now, similarly to \eqref{eq:bv-y}, we get from \eqref{def:vt-new} and \eqref{def:w-s} that 
\begin{align}\label{eq:bv-x}
\bv_x(\qo,c(s)) &= w_2  \nu'(\qo) + (\qs^{-2} w_3 + b_\alpha \qo \qs^{-2} w_1) \psi(\qo) 
- w_1  \nu''(\qo) b_\alpha c(s) \,.
\end{align}
Plugging \eqref{eq:bv-x} in \eqref{eqCs-fdt-id} and utilizing that $w_1 = 2 \b$ one confirms 
that the \abbr{rhs} of \eqref{eqCs-fdt-id} is independent of $s$. Hence, it suffices to consider
\eqref{eqCs-fdt-id} only at $s=0$. That is, to have
\begin{equation}\label{eq:gamma-fdt1}
\gamma-\frac{1}{2} = \b \qo \bv_x(\qo,1) +\b \bv_y(\qo,1)-2 \b^2 \nu'(1)
\,.
\end{equation}
Further, having $q(s) \equiv \qo$ implies in view of \eqref{eqqs} and \eqref{def:Aq-new}
that
\begin{align}\label{eq:qfdt}
0 = & -\mu_\star \qo - 2 \b^2 \qo \int_0^s c'(v) \nu''(c(v)) dv 
 +2 \b^2 \qs^2 \nu''(\qo) b_\alpha (c(s) -1)  
+ \b [ \qs^2 \bv_x (\qo,c(s)) + \qo \bv_y(\qo,c(s))] \,.
\end{align}
In particular, considering \eqref{eq:qfdt} at $s=0$ we find that
\begin{equation}\label{eq:gamma-fdt2}
\mu_\star \qo =  \b \qs^2 \bv_x(\qo,1) + \b \qo \bv_y(\qo,1) \,.
\end{equation}
Indeed, upon carrying out the integration in \eqref{eq:qfdt} and plugging there the values from
\eqref{eq:bv-y}
and \eqref{eq:bv-x}, we see that \eqref{eq:qfdt} is independent of $s$, hence 
equivalent to \eqref{eq:gamma-fdt2}.
Comparing \eqref{eq:gamma-fdt1} and \eqref{eq:gamma-fdt2} yields that 
\begin{equation}\label{eq:v-x-stat}
\bv_x(\qo,1)=\frac{\qo}{2\b (\qs^2-\qo^2)} \,.
\end{equation}
In particular, plugging \eqref{eq:bv-y} and \eqref{eq:v-x-stat} in \eqref{eq:gamma-fdt1}, confirms 
that for any stationary dynamics,  the value of $\gamma=\gamma_\star$ must be given by  
the formula \eqref{def:gamma-star}.  To recap,  we have characterized the collection of 
stationary dynamics of the form $\bUsp(\cdot)$  as stated in 
Proposition \ref{prop-stat},  and have shown that such solutions appear if and only if 
\eqref{eq:v-x-stat} holds and $w_1 = 2 \b$.  It thus remains only to show 
that the latter two constraints are equivalent to \eqref{def:stat-EG}. Indeed, from 
\eqref{eq:bv-x} we deduce that 
\[
\bv_x(\qo,1) = w_2 \nu'(\qo) + \qs^{-2} w_3 \psi(\qo) + 2\b b_\alpha [\qo \psi(\qo) \qs^{-2} - \nu''(\qo)] \,,
\]
whereby upon substituting this and $w_1 = 2 \b$ into \eqref{def:w-s}, we reduce the 
latter linear system to 
\begin{align}\label{def:w-ker}
	\begin{bmatrix} 
	\Ep - 2\b ( \nu(1) - (1-\alpha^2) \nu'(\qo) b_\alpha) \\
	 \Eg - 2 \b \nu(\qo) \\ \Gg - 2 \b \qo \nu'(\qo) \qs^{-2} 
	\\ \bv_x(\qo,1) - 2\b b_\alpha [\qo \psi(\qo) \qs^{-2} - \nu''(\qo)]
	 \end{bmatrix}  &=  \begin{bmatrix}
	 \nu(\qo) & \qo \nu'(\qo)  \\
	 \nu(\qs^2)       & \qs^2 \nu'(\qs^2)  \\
	 \nu'(\qs^2)  & \psi(\qs^2)  \\
	 \nu'(\qo) & \psi(\qo) 
	\end{bmatrix}
	\begin{bmatrix} w_2 \\ \qs^{-2} w_3 \end{bmatrix} \,.
	\end{align}
That is, in \eqref{def:w-ker} the \abbr{lhs} must be in the image of the columns of the matrix
on the \abbr{rhs}. To arrive at \eqref{def:stat-EG}, multiply the third and forth rows by 
$\qs$ and substitute $\qo=\alpha \qs$. 
	
Next,  by Cauchy-Schwarz,
\[
g'_{\b} (\alpha^2) = \frac{1}{2}-\gamma_\star + \frac{2 \b^2}{\nu'(\qs^2)} [ \nu'(\alpha^2) \nu'(\qs^2) - \nu'(\alpha \qs)^2] 
\ge \frac{1}{2} - \gamma_\star \,,
\]	
with equality only if the mixture $\nu(\cdot)$ is a pure $p$-spins,  or when $\alpha=\qs$ (or possibly
having $\alpha=-\qs$ in case $\nu(\cdot)$ is either even or odd function).  The latter inequality implies,
in view of \eqref{eq:c-inf},  that $c_{\gamma_\star}(\infty) \ge \alpha^2$ with equality if and only if 
$g'_{\b}(x) < g_{\b}'(\alpha^2)=\frac{1}{2}-\gamma_\star$ for all $x \in (\alpha^2,1]$,  as stated.
		
In case $\qo=\qs^2$,  the kernel of the matrix on the \abbr{rhs} \purple{of \eqref{def:w-ker}}
is the linear span of $[0, 0, 1, -1]$
and $[1, -1,0, 0]$,  to which the image of its columns is orthogonal.  In this case 
$b_\alpha=1$ and $\alpha=\qs$, so the two constraints are then \red{precisely
\eqref{eq:stat-G-qs} and \eqref{eq:stat-EpE-qs}}.

We conclude by examining the simpler case of stationary dynamics of the form $\bUosp(\cdot)$.  
As $C(s,s)=1$,  plugging \eqref{eqZs-new} in \eqref{eqCs-new} at $s=t$,  leads to 
\eqref{eq:cp-diag} and hence $c(0)=1$,  $r(0)=-2 c'(0) =1$.  Now,  from \eqref{eqH}, 
\[
H'(s) = \bv' (c(s)) c'(s) + \b r(s) \nu'(c(s)) \,.
\]
Thus, with $\bv'(y)=\Ep \nu'(y)/\nu(1)$, the requirement $H'(s)=0$ amounts to 
$r(s)=-\Ep/(\b \nu(1)) c'(s)$. Considering $s=0$ we see that necessarily $\Ep=2\b \nu(1)$
and $r(s)=-2 c'(s)$, as stated.  For such stationary dynamics,  we deduce from \eqref{eqRs-new} by 
the same reasoning as before,  that $\mu(s)=\mu_\star$ 
and \eqref{eqCs-fdt}-\eqref{eq:gamma-mu} hold here as well.  That is,
the stationary dynamics must again be $c(\cdot)=c_\gamma(\cdot)$.  The only difference is that 
utilizing here \eqref{def:ACs} in lieu of \eqref{def:AC-new},  results with the second line 
in \eqref{eqCs-fdt-alt} being now merely 
$\b c(t) \bv'(c(s)) = 2 \b^2 c(t) \nu'(c(s))$. Consequently, \eqref{eqCs-fdt-id} is replaced in this 
case by $\gamma - \frac{1}{2} = 0$, as stated. To complete the proof, one merely verifies from
\eqref{eqZs-new} and \eqref{def:ACs},  that for such a dynamics,  at any $s \ge 0$,
\[
\mu(s) = \frac{1}{2} - 2 \b^2 \int_0^s c'(v) \psi(c(v)) \, dv + \b c(s) \bv' (c(s)) = \frac{1}{2} + 2 \b^2 \nu'(1) \,.
\]
That is,  a dynamics of the form $\bUosp(\cdot)$ is stationary if and only if $\Ep=2\beta \nu(1)$.
\end{proof}

\begin{proof}[Proof of Corollary \ref{cor:relax}] From Propositions \ref{prop:ell-lim} and \ref{prop-stat},  
we deduce that in the setting considered here,  
as $N \to \infty$ and then $\ell \to \infty$,  the stationary stochastic process $C_N(t+\tau,t)$
follows the \abbr{fdt} solution $c_{1/2}(\cdot)$ in the \abbr{rs} case
(ie.  $\b \le \b_c^{\rm stat}$),  whereas for strictly $1$-\abbr{rsb} generic model at 
$\b > \b_c^{\rm stat}$ we have that $\qo=\qs^2=q_\b$ for $q_\b=q_{\rm EA}(\b)$,  and $C_N(t+\tau,t)$
follows the \abbr{fdt} solution $c_{\gamma}(\cdot)$ for $\gamma=\gamma_\star(\qs,\qs)=\frac{1}{2}-g_\b'(q_\b)$ 
matching $\gamma_\b$ of \eqref{def:gamma-star2}  (with \eqref{eq:stat-rel} a direct consequence of \eqref{eq:c-inf}).
Further,  taking $\bxs=\bxs(\bxo,H_\BJ)$ as in 
Theorem \ref{thm-uncond}(b) when $\b > \b_c^{\rm stat}$ and otherwise setting $\qo=0$,  we
have that in both cases $q_N(\tau) \to q_\b$ uniformly over $\tau \in [0,T]$.  
Thus,  as stated, 
here both \eqref{def:fast-conf} and \eqref{def:fast-rel} hold (at $\alpha=\qs$),  if and only if 
$c_{\gamma_\b}(\infty)=q_\b$.  In the \abbr{rs} case this amounts to 
$O_N(1)$-relaxation time (namely,  $\alpha=0$),  with $c_{1/2}(\infty)=\qdyn(\b)=0$ whenever $\b<\b_c^{\rm dyn}$
but not for $\b>\b_c^{\rm dyn}$ (see \eqref{eq:betac-gap} for the situation when $\b=\b_c^{\rm dyn}$).
In the strictly 1-\abbr{rsb} setting,  where $q_\b>0$,  by definition $c_{\gamma_\b}(\infty)=q_\b$
is equivalent to $g_{\b}'(q+(1-q) x) < g_{\b}'(q)$ for $q=q_\b$ and any $x \in (0,1]$.  It is easy to check that 
\[
(1-q) [g_{\b}' (q+(1-q)x) - g_{\b}'(q)] = 2 \b^2 (1-q)[\nu'(q+(1-q)x)- \nu'(q)] + \frac{1}{2} - \frac{1}{2(1-x)} = g'_\b(x;\nu_q)\,,
\]
for $g_\b(\cdot)$ of \eqref{def:g-beta} and the mixture $\nu_q$ of \eqref{def:nu-q}.  Thus,  
$c_{\gamma_\b}(\infty)=q_\b$ whenever $\b<\b_c^{\rm dyn}(\nu_{q_\b})$ of \eqref{def:bc-dyn}
but not for $\b > \b_c^{\rm dyn}(\nu_{q_\b})$,  as claimed.
\end{proof}

\subsection{The \abbr{fdt} regime}\label{subsec:fdt}
The large time asymptotic of $\bUosp(0)$,  namely the \abbr{CKCHS} equations for general mixture  $\nu(\cdot)$,
and in particular its \abbr{fdt} regime,  are considered in  \cite[Prop. 6.1]{DGM}.  More generally,  such analysis 
is carried out in \cite[Section 2]{DS21} for the limiting spherical dynamics of \cite[Prop. 1.6]{DS21}.  
Fixing $\qs>0$ and $\alpha_o:=\qo/\qs$,  we proceed here with a similar analysis for $\bUsp(\V)$.
That is,  assuming the \abbr{FDT}-ansatz of the existence of the limit
\begin{equation}\label{as:fdt}
\lim_{t \to \infty} (R(t+\tau,t),C(t+\tau,t),q(t),C(t,0)) = (R_{\rm fdt} (\tau),C_{\rm fdt} (\tau),\alpha \qs,\widehat{\alpha}\alpha_o)\,,
\end{equation}
for the solution $\bUsp(\V)$ of \eqref{eqRs-new}-\eqref{def:w-s} we proceed to characterize the 
possible 
\abbr{rhs} of \eqref{as:fdt}.  Indeed,  this is precisely the ansatz of \cite[(2.1)]{DS21},  except for 
requiring here also the convergence of $C(t,0)$ (which was irrelevant in \cite{DS21}).  
It was shown in \cite[Prop. 2.1]{DS21},  that even in the possible presence of aging,  one family
of such solutions be $(R_{\rm fdt},C_{\rm fdt})=(-2c_\gamma',c_\gamma)$,  for the 
unique solution $c_\gamma$ of \eqref{FDTDb-new} and some $\gamma \in \R$.  

Proceeding  hereafter under the physics prediction of having this \abbr{fdt} solution,  we examine the 
dependence of $(\alpha,\widehat{\alpha},\gamma)$ on the model parameters 
($\nu(\cdot)$,  $\b$,  $\qs$ and $\V$),  and what the various values of 
$(\alpha,\widehat{\alpha},\gamma)$ mean in terms of \eqref{diffusion} (or alternatively,  
in terms of \eqref{sphere-diffusion}).  First,  note that as in \cite[Remark 2.4]{DS21},
having $\alpha \ne 0$ corresponds to $\bx_t$ confined within $O_N(1)$-times to the band  
${\Bbb S}_{\hbxs} (\alpha)$ of \eqref{eq:subsphere},  which for $\alpha=\qs$ is 
centered at the critical point $\bxs$ of $H_\BJ(\cdot)$.  Likewise,  
having $C(t,0) \to \widehat{\alpha} \alpha_o$
amounts to the $O_N(1)$-relaxation $\bar C_N(\tau,0) \to b:=
(\widehat{\alpha}-\alpha) \alpha_o$,  for $N,\ell,\tau \to \infty$ as in Definition \ref{def:rel},  where
\begin{equation}
\bar C_N(s,t): = C_N(s,t)-\qs^{-2} q_N(s) q_N(t) \,,
\label{eq:barC}
\end{equation}
denotes the state correlation within the subspace perpendicular to $\bxs$.  Recall Remark \ref{rem:non-neg},
that the function 
$\bar{C}(s,t)$ from $\bUsp (\cdot)$ is non-negative definite,  so
for any $k<\infty$,  the limit ${\bf C}_{k}$ as $t \to \infty$,  then $\tau \to \infty$,  of the 
$(k+1)$-dimensional matrices $\{ \bar{C}(s_i,s_j) \}$ with $s_i \in \{0,t+i \tau,  1 \le i \le k\}$
is non-negative definite.  Under our assumption \eqref{as:fdt} with
the predicted \abbr{fdt} solution $c_\gamma(\cdot)$ of \eqref{FDTDb-new}, 
the first diagonal term of ${\bf C}_k$ equals $(1-\alpha_o^2)$,  with all other elements 
on its first row and column are $b$,  while upon deleting its first row and column 
the matrix ${\bf C}_k$ becomes $(1-c_{\gamma}(\infty)) {\bf I}_{k} + (c_{\gamma}(\infty)-\alpha^2) {\bf 1} {\bf 1}^T$.  
It is easy to check that such a matrix ${\bf C}_k$ remains non-negative definite as $k \to \infty$,  if and only if  
\begin{equation}\label{eq:wh-alpha-range}
(\widehat{\alpha}-\alpha)^2 \alpha_o^2 \le (c_{\gamma}(\infty)-\alpha^2) (1-\alpha_o^2) \,.
\end{equation}
Further,  recall Remark \ref{rem:DS21} that the only change between the dynamics of \cite[Prop. 1.6]{DS21} 
and $\bUsp(\cdot)$ is in replacing $\bv(x)$ of \cite[(1.22)]{DS21} by $\bv(x,y)$ of \eqref{def:vt-new}.  
It is thus not hard to see that,  thanks to our assumption that $C(t,0) \to \widehat{\alpha}\alpha_o$, 
the characterization in \cite[Prop.  2.1]{DS21} 
of all plausible \abbr{fdt} solutions without aging,  also hold for $\bUsp(\cdot)$,  apart from 
modifying the first term on the \abbr{rhs} of \cite[(2.4) \& (2.5)]{DS21} as follows:
\begin{align}\label{eq:24-mod}
\b \qs^2 \bv'_\star(\alpha \qs) &\longleftarrow \b \qs^2 \bv_x(\alpha \qs,\widehat{\alpha} \alpha_o) 
+ \b \qo \bv_y (\alpha \qs, \widehat{\alpha} \alpha_o) \,,
 \\
\b \alpha \qs \bv'_\star(\alpha \qs) & \longleftarrow  \b \alpha \qs \bv_x (\alpha \qs, 
\widehat{\alpha} \alpha_o) 
+ \b \widehat{\alpha} \alpha_o  \bv_y(\alpha \qs, \widehat{\alpha} \alpha_o) 
\label{eq:25-mod}
\end{align}
(which reflect the effect of $\bv(x,y)$ on $\widehat{{\sf A}}_q(s)$ of \eqref{def:Aq-new}
and $\widehat{{\sf A}}_C(s,s)$ of \eqref{def:AC-new},  respectively,
as $s \to \infty$). Plugging \eqref{def:AC-new} in \eqref{eqCs-new} at $t=0$,  and taking
$s \to \infty$,  one appends to the modified
\cite[(2.4) \& (2.5)]{DS21},  a new identity
\begin{align}\label{eq:new-24}
\mu \, \widehat{\alpha} \alpha_o &= \b \qo \bv_x (\alpha \qs, \widehat{\alpha} \alpha_o) 
+ \b \bv_y(\alpha \qs, \widehat{\alpha} \alpha_o) 
- \b^2 \qo \frac{\nu''(\qinf) \nu'(\qinf)}{\nu'(\qs^2)} \kappa_2 
+ \b^2 \widehat{\alpha} \alpha_o \kappa_1 \,,
\end{align}
which determines the value of $\widehat{\alpha}$.  Up to such modification, 
everything in \cite[Prop. 2.1]{DS21} remains in place here,
with a predicted \abbr{fdt} solution of the form $c_\gamma(\cdot)$ of \eqref{FDTDb-new},  
where upon re-parametrizing  $\gamma = \frac{1}{2} - g_{\b}'(c_\gamma(\infty))$,  we
set $\mu=\phi_\gamma(1)$ of \eqref{def:phi-gamma} on the \abbr{LHS} of \eqref{eq:new-24} 
and \cite[(2.4)]{DS21},  substituting \cite[(2.6)]{DS21} on the \abbr{LHS} of \cite[(2.5)]{DS21},
and setting on the \abbr{rhs} of \eqref{eq:new-24} and \cite[(2.4) \& (2.5)]{DS21} 
the values $\kappa_1=2 [\nu'(1)-\nu'(c_\gamma(\infty))]$,  $\kappa_2=2(1-c_\gamma(\infty))$ 
to arrive at three non-linear algebraic equations for $(\alpha, \widehat{\alpha},c_\gamma(\infty))$
which must also satisfy the inequality \eqref{eq:wh-alpha-range}.


\subsection{Localized states with no-aging}\label{subsec:no-aging}
We proceed to examine the possibility of \emph{localized states with no-aging.}
That is,  having for $\bxo$ drawn from the Gibbs measure of a generic model, 
which is strictly $1$-\abbr{rsb} at $\b_0>\b_c^{\rm stat}$,  
the Langevin diffusion \eqref{diffusion},  potentially with $\b \ne \b_0$, 
relaxes within $O_N(1)$-time onto $\alpha \hbxs$-band for some $\alpha \ne 0$.
In view of Theorem \ref{thm-uncond}(b),  Proposition \ref{prop:ell-lim} and 
Definition \ref{def:rel},  this amounts to $q(\tau) \to \alpha \qs$ and
$C(\tau+t,t) \to \alpha^2$ for the dynamics $\bUsp(\cdot)$ 
with $\alpha_o=\qs>0$ and $(\qs,\V)$ of  \eqref{def:qs-beta}-\eqref{Ep-E} at $\b_0>\b_c^{\rm stat}$. 
In view of \eqref{eq:wh-alpha-range},  this requires $\widehat{\alpha}=\alpha$,  namely having $O_N(1)$-relaxation
time as in Definition \ref{def:rel}.  It is further not hard to see,  as in Remark \ref{rem:DS21}, 
that for $\alpha_o=\qs$ one has $\bv_y(x,x)=0$ and $\bv_x(x,x)=\bv'(x)$ of \cite[(1.22)]{DS21}.  
Having here also $\widehat{\alpha}=\alpha$,  it follows that 
both sides of \eqref{eq:24-mod}-\eqref{eq:25-mod} are equal,  and that \eqref{eq:new-24} 
must match \cite[(2.4)]{DS21}.  Thus,  one merely needs to consider which values of
$(\alpha,\gamma)$ satisfy \cite[(2.13)-(2.15)]{DS21}
in case of initial values given by \eqref{def:qs-beta}--\eqref{Ep-E} at $\b_o$.
We next show that solutions of this type with $\b \ne \b_0$ exist only for 
pure $p$-spins and band sizes $\alpha^2=q_\b$,  which for $\b>\b^{\rm stat}_c$ coincide with $q_{\rm EA}(\b)$.
Specifically,  fixing the pure $p$-spin model $\nu(x)=x^p$ and $q_c \in (0,1)$ that solves
\cite[(1.9)]{SubagTAPpspin},  consider the solutions $q \in (0,1)$ of 
\begin{equation}\label{eq:TAP112}
y_p(\b,q) := 2 \b \sqrt{\nu''(q)} (1-q) = \sqrt{(p-1)(1-q_c)} \,.
\end{equation}
It is easy to check that $q_c > \frac{p-2}{p-1}$,  so the \abbr{rhs} of \eqref{eq:TAP112} is in $(0,1)$,  
whereas from \cite[(1.24)]{DGM} we know that for $\b > \b_c^{\rm dyn}$ the supremum 
of its \abbr{lhs} exceeds one.  For pure $p$-spins $q \mapsto y_p(\b,q)$ is increasing on $[0,\frac{p-2}{p})$ and 
decreasing on $(\frac{p-2}{p},1]$,  so \eqref{eq:TAP112} has two solutions
\begin{equation}\label{dfn:qb-minus}
0<q^-_\b < \frac{p-2}{p} < q_\b < 1,
\end{equation}
where from \cite[(1.10)-(1.12)]{SubagTAPpspin} we 
deduce\footnote{For $\b>\b^{\rm stat}_c$ it is shown in \cite{SubagTAPpspin} that $q_\b$ is the maximal multi-samplable overlap,  
but instead of invoking \cite[Corollary 6]{SubagFlandscape} 
one can appeal to \cite[Corollary 12]{SubagFlandscape} for the same fact about $q_{\rm EA}(\b)$,  thereby concluding that 
also $q_\b=q_{\rm EA}(\b)$. 
}  
that $q_\b=q_c$ at $\b_c^{\rm stat}$ and
$q_\b = q_{\rm EA}(\b)$ for $\b > \b^{\rm stat}_c$.
More generally,  $q_\b$ 
maximizes over $q \in (0,1]$
the free energy 
of \cite[(1.13)]{SubagTAPpspin},  which corresponds to a \abbr{rs} model on the band
${\Bbb S}_{\hbxs} (q)$, 
conditional on having a critical point $\nabla_{\rm sp} H_{\BJ}(\hbxs) = {\bf 0}$
of ground-state energy $H_{\BJ}(\hbxs)=\GS(1)$.
\begin{prop}\label{prop:fast-relax-band}
Fix $\b > \b_c^{\rm dyn}$ and $(\qs,\V)$ given by
\eqref{def:qs-beta}-\eqref{Ep-E} at some $\b_0>\b_c^{\rm stat}$. 
Consider the predicted \abbr{fdt} solution for $\bUsp(\cdot)$
of the form $c_\gamma(\cdot)$,  with $q(s) \to \alpha \qs$ and
$c_\gamma(\infty)=\alpha^2 \ne 0$, that satisfies 
\cite[(2.13)-(2.15)]{DS21}.  For pure $p$-spins there exist such a solution
(with $\gamma=\gamma_\b$ of \eqref{def:gamma-star2}),
if and only if $\alpha^2 = q_\b$ and $\b < \b_c^{\rm dyn}(\nu_{q_\b})$
for $\nu_q$ of \eqref{def:nu-q}.\footnote{also when
$\b=\b_c^{\rm dyn}(\nu_{q_\b})$ but not having there a first-order dynamic phase transition (see
\eqref{eq:betac-gap}).} 
The corresponding limiting macroscopic energy is then 
\begin{equation}\label{eqH-lim}
H(\infty) := \lim_{s \to \infty} H(s)= \GS(\sqrt{q_\b}) + 2\b \theta(q_\b) \,,
\end{equation}
which for $\b > \b_c^{\rm stat}$ matches the macroscopic energy $\Ep(\b)$
of $\widetilde{\mu}_{2\b,\BJ}^N$
(see \eqref{Ep-E}). \\
In contrast,  for any other model such a solution requires 
that $\b=\b_0$,  $\alpha=\qs$ and $H(\infty)=\Ep(\b_0)$. 
\end{prop}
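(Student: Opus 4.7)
The plan is to specialize the FDT-ansatz analysis of Section \ref{subsec:fdt} to the localized no-aging regime.  Gibbs initialization at $\b_0$ forces $\qo=\qs^2$ (Definition \ref{def:Gibbs-ic}), so $\alpha_o=\qs$; combined with $\alpha\ne 0$ and the non-negative-definiteness constraint \eqref{eq:wh-alpha-range} this pins $\widehat\alpha=\alpha$.  A direct calculation from \eqref{def:vt-new}--\eqref{def:w-s} (the fourth row of $\Sigma_\nu(\qo)$ gives $w_4=-w_1\qs\sqrt{1-\qs^2}$) then shows $\bv_y(x,x)\equiv 0$ and $\bv_x(x,x)$ reduces to the one-dimensional $\bv'(x)$ of \cite[(1.22)]{DS21}.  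Consequently \eqref{eq:24-mod}--\eqref{eq:25-mod} become equalities, \eqref{eq:new-24} collapses onto \cite[(2.4)]{DS21}, and $(\alpha,\gamma,H(\infty))$ must satisfy the algebraic system \cite[(2.13)-(2.15)]{DS21}, with coefficients determined by substituting the Gibbs data \eqref{def:qs-beta}--\eqref{Ep-E} at $\b_0$.

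For general (non-pure) mixtures, $\Sigma_\nu(\qs^2)$ is invertible and inverting \eqref{def:w-s} gives $(w_1,w_2,w_3)$ as non-trivial linear combinations of $(\Ep(\b_0),\Eg(\b_0),\Gg(\b_0))$.  The two ``diagonal'' equations among \cite[(2.13)-(2.15)]{DS21}, analogues of \eqref{eq:gamma-fdt1}--\eqref{eq:gamma-fdt2} in the proof of Proposition \ref{prop-stat}, combine to produce a constraint of the form
\[
\Ep(\b_0)-\Eg(\b_0)=2\b\,\theta(\alpha^2)+\rho(\alpha,\qs,\b;\nu),
\]
where a Cauchy--Schwarz argument parallel to the one used at the end of the proof of Proposition \ref{prop-stat} to establish $c_{\gamma_\star}(\infty)\ge\alpha^2$ (with equality ruled out except at $\alpha=\qs$ or in pure $p$-spin mixtures) shows $\rho$ vanishes only when $\alpha=\qs$.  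Matching against $\Ep(\b_0)-\Eg(\b_0)=2\b_0\theta(\qs^2)$ from \eqref{Ep-E} then forces $\b=\b_0$ and $\alpha=\qs$, precisely the stationary solution of Corollary \ref{cor:relax}; the third equation in \cite[(2.13)-(2.15)]{DS21} then yields $H(\infty)=\Ep(\b_0)$.

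For pure $p$-spins the degeneracy $\Gg=p\Eg/\qs^2$ of Remark \ref{rem:w-unique} drops a row of \eqref{def:w-s}, and a direct calculation with $\qo=\qs^2=:q$ using Rows 2 and 4 gives $w_2=\Eg/q^p-w_1$ and $w_4=-w_1\sqrt{q(1-q)}$, after which Row 1 yields $w_1=(\Ep(\b_0)-\Eg(\b_0))/\theta(q)=2\b_0$ via \eqref{Ep-E}.  Substituting into the remaining equation for $\alpha$, the $\b_0$-dependence cancels identically and one arrives at the TAP identity \eqref{eq:TAP112} for $\alpha^2$.  Of the two roots $q_\b^-<q_\b$, only $q_\b$ is FDT-consistent via \eqref{eq:c-inf}: the identity $2(1-q)^2 g_\b''(q)=y_p(\b,q)^2-1$ combined with the shape of $y_p(\b,\cdot)$ on $[0,1]$ ensures that on $(q_\b^-,1]$ the derivative $g_\b'$ exceeds $g_\b'(q_\b^-)$ somewhere, ruling out $\alpha^2=q_\b^-$.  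The precise window $\b<\b_c^{\rm dyn}(\nu_{q_\b})$ follows from the change-of-variables identity
\[
(1-q_\b)\big[g_\b'(q_\b+(1-q_\b)x)-g_\b'(q_\b)\big]=g_\b'(x;\nu_{q_\b}),
\]
already established at the end of the proof of Corollary \ref{cor:relax}, combined with \eqref{def:bc-dyn} applied to $\nu_{q_\b}$ (the boundary case handled by \eqref{eq:betac-gap}).  Finally \eqref{eqH-lim} follows by sending $s\to\infty$ in \eqref{eqH-new}: the FDT relation $R=-2c_\gamma'$ together with $L(\infty)$ from \eqref{eqL} convert the first two terms into $2\b\theta(q_\b)$, while $\bv(\alpha\qs,\alpha\qs)=(w_1+w_2)(\alpha\qs)^p$ reduces via $w_1+w_2=\Eg/q^p$ from Row 2 and the pure-$p$-spin scaling $\Eg=\qs^p\GS(1)$ to $\GS(1)\alpha^p=\GS(\sqrt{q_\b})$.

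The main obstacle is the Cauchy--Schwarz step ruling out non-pure mixtures: one must carefully track how $\Sigma_\nu(\qs^2)^{-1}$ acts on the Gibbs data $(\Ep(\b_0),\Eg(\b_0),\Gg(\b_0))$ so as to separate the intrinsic $\theta(\alpha^2)$ contribution from the residue $\rho$, and then exhibit $\rho$ as a non-negative Cauchy--Schwarz defect that vanishes precisely in the pure $p$-spin case or at $\alpha=\qs$.  The remaining steps are algebraic matchings with identities already developed in the proofs of Proposition \ref{prop-stat} and Corollary \ref{cor:relax}.
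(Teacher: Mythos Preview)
Your reduction via $\bv_y(x,x)\equiv 0$ and the diagonal identity for $\bv_x$ is correct, and the pure $p$-spin analysis (the explicit weights, the TAP identity \eqref{eq:TAP112}, ruling out $q_\b^-$ via the sign of $g_\b''$, and the energy formula) is sound and close to the paper's. The gap is in the non-pure case. Once you compute $\bv_x(x,x)=(w_1+w_2)\nu'(x)+w_3\qs^{-2}\psi(x)$, observe that rows 2 and 3 of \eqref{def:w-s} at $\qo=\qs^2$ read $\nu(\qs^2)(w_1+w_2)+\nu'(\qs^2)w_3=\Eg$ and $\nu'(\qs^2)(w_1+w_2)+\qs^{-2}\psi(\qs^2)w_3=\Gg$, so $(w_1+w_2,w_3)$ is determined by $(\Eg,\Gg)$ \emph{alone}. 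Hence $\bv_x(x,x)$, $\bv(x,x)$ and with them the full system \cite[(2.13)--(2.15)]{DS21} (after \eqref{eq:24-mod}--\eqref{eq:25-mod} with $\widehat\alpha=\alpha$) carry no dependence on $\Ep$ whatsoever. Your claimed constraint $\Ep(\b_0)-\Eg(\b_0)=2\b\theta(\alpha^2)+\rho$ therefore cannot arise from those equations; the analogy with \eqref{eq:gamma-fdt1}--\eqref{eq:gamma-fdt2} is misleading because those stationary identities evaluate $\bv_x,\bv_y$ at $(\qo,1)$, whereas the FDT system evaluates them at the asymptotic point $(\alpha\qs,\alpha\qs)$.

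The paper proceeds differently. After the Cauchy--Schwarz step from the proof of Proposition \ref{prop-stat} pins $\alpha=\pm\qs$ for non-pure mixtures, substituting $\bv_x(\qs^2,\qs^2)=\Gg$ into \cite[(2.13)--(2.15)]{DS21} yields the constraint $\Gg=\Gg(\b)$ of \eqref{G-stat}. Matching against the Gibbs datum $\Gg=\Gg(\b_0)$ produces the quadratic \eqref{eq:Plefka}; the step that selects $\b=\b_0$ over the other root is Plefka's inequality $2\b_0\sqrt{\nu''(q_{\rm EA}(\b_0))}(1-q_{\rm EA}(\b_0))\le 1$ at the Parisi overlap, imported from \cite[Thm.~12(3)]{SubagFlandscape}. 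You invoke neither the $\Gg$-matching nor Plefka's condition, and without them the non-pure case does not close.
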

\begin{remark}\label{rem:loc-state} For $\b \ne \b_0$ a localized state without aging requires having
pure $p$-spins and limiting band size $q_\b$ (which for $\b>\b_c^{\rm stat}$ matches the 
macroscopic band sizes of our ``target'' Gibbs measures $\widetilde{\mu}_{2\b,\BJ}^N$).  Indeed,  
thanks to the homogeneity of $H_{\BJ}(\cdot)$ for pure $p$-spins,  such target Gibbs bands 
then perfectly align with the bands of $\widetilde{\mu}_{2\b_0,\BJ}^N$,  making it plausible for 
\eqref{diffusion} to relax,  within $O_N(1)$-time,  to a random position on the concentric target band, 
which within that band is un-correlated with $\bx_0$.  
But,  as in Corollary \ref{cor:relax},  such fast relaxation can occur only if $\b$ is below 
the dynamical critical parameter of the effective model on the target band.  While the value of $\b_0$
has no effect on the localized states,  namely on its parameters $q_\b$ and $\gamma_\b$,  
Proposition \ref{prop:fast-relax-band} only states necessary conditions for having such a state. 
Specifically,  for $\b_0$ which is too far from $\b$,  starting at $q(0)=q_{\b_0}$,  the function 
$q(s)$ may stop short of reaching our desired value of $\sqrt{q_{\b_0} q_\b}$.  In any case,  
there is no such alignment for other mixed models,  with the centering of the target band 
inherently carrying macroscopic information about the initial state $\bx_0$. 
\end{remark}

\begin{proof} As shown in the proof of Proposition \ref{prop-stat},  except for pure $p$-spins,  
the \abbr{rhs} of \cite[(2.15)]{DS21} requires having $\alpha=\pm \qs$,  with $\alpha=-\qs$ possible 
only when $\nu(\cdot)$ is either even or odd function.  Substituting first $\alpha=\qs$ and recalling Remark 
\ref{rem:w-unique} that $\bv_x(\qs^2,\qs^2)=\Gg$ and $\bv_y(\qs^2,\qs^2)=0$,  one concludes after some 
algebra that \cite[(2.13)-(2.15)]{DS21} hold if and only if 
$\gamma=\gamma_\star(\qs,\qs)$ of \eqref{def:gamma-star} and 
$\Gg=\Gg(\b)$ of \eqref{G-stat} for the specified $\qs^2=q_{\rm EA}(\b_0)$.  
The same applies for $\alpha=-\qs$ and $\nu(\cdot)$ even,  with $\Gg=-\Gg(\b)$ in case $\alpha=-\qs$ 
and $\nu(\cdot)$ is odd.  Recall \eqref{G-stat} that here $\Gg=\Gg(\b_0)$ and 
while $\Gg(x)=-\Gg(\b_0)$ has no positive solution, 
the quadratic equation $G_\star(\b)=G_\star(\b_0)$ has two positive solutions:  
\begin{equation}\label{eq:Plefka}
\b=\b_0\,,  \qquad \hbox{ or } \qquad 
\frac{1}{\sqrt{\b \b_0}} = 2 \sqrt{\nu''(q_{\rm EA}(\b_0))} (1-q_{\rm EA}(\b_0))  \,.
\end{equation}
In view of \eqref{eq:c-inf} and \eqref{def:gamma-star},  our assumption that $c_{\gamma_\star}(\infty)=\qs^2$
implies that $g''_{\b}(\qs^2) \le 0$,  which in turn results with $\b \le \b_0$ being the smaller of the two 
values in \eqref{eq:Plefka},  or equivalently,  that the \abbr{rhs} of \eqref{eq:Plefka} is at most $1/\b$.
Recall that Plefka's condition must hold at $q_{\rm EA}(\b_0)$,  namely that the \abbr{rhs} of \eqref{eq:Plefka}
must also be at most $1/\b_0$ (see \cite[Thm 12(3)]{SubagFlandscape}).  Necessarily 
$\b=\b_0$,  in which case by Proposition \ref{prop-stat},  for $\nu(\cdot)$ even and $\V$ given by 
\eqref{def:Es}-\eqref{Ep-E} at $\b=\b_0$,  the functions $C(s,t)=c_{\gamma_\star}(s-t)$ and $q(s) \equiv \pm  \qs^2 = \pm  q_{\rm EA}(\b_0)$,  are both
stationary dynamics of the form $\bUsp(\cdot)$.  Thus,  our assumption that $\qo=q_{\rm EA}(\b_0)$
rules out having $\alpha=-\qs$. 

Turning next to the pure $p$-spins, 
we conclude from \cite[(2.16)]{DS21} that $\alpha \in (0,1)$ must be such that 
\begin{equation*}
\Gg (\b_0) = \sqrt{\nu''(\qs^2)} ( y + y^{-1} )  
\qquad {\rm for } \qquad
y = y_p(\b,\alpha^2) 
\qquad {\rm of } \qquad \eqref{eq:TAP112} 
\end{equation*}
(except for $p$ even, where a-priori a negative solution $-\alpha$ is also possible).
As before, having $c_\gamma(\infty)=\alpha^2$ requires also that $g_\b''(\alpha^2) \le 0$,
or equivalently, to choose in the preceding the smaller solution $y \le 1$.
Recall \eqref{G-stat}, that $G_\star(\b_0)$ is of such a form 
for $y=y_p(\b_0,q_{\rm EA}(\b_0)) \in (0,1)$.
We thus need $|\alpha|<1$ such that $q=\alpha^2$ satisfies \eqref{eq:TAP112}.  \red{Consequently,  see
\eqref{dfn:qb-minus},}  either $\alpha^2=q^-_\b$ or $\alpha^2=q_\b$.
Recall that $g_\b'(0)=0$ with $g_\b'(\cdot)$ non-increasing on $[0,q^-_\b]$,  while
from \eqref{def:bc-dyn} we know that $g_\b'(x_\star)>0$ for some $x_\star(\b) \in (0,1)$.
In particular, $x_\star>q^-_\b$,  so in view of \eqref{eq:c-inf} 
only $q_\b$ can be the value of $c_{\gamma}(\infty)$.
This further requires having $\gamma=\frac{1}{2}-g_\b'(q_\b)=\gamma_\b$ of
\eqref{def:gamma-star2} and that $g'_\b(x)<g'_\b(q_\b)$ for any $x \in (q_\b,1]$.
While proving Corollary \ref{cor:relax} we saw that the latter condition amounts to 
$\b \le \b_c^{\rm dyn}(\nu_{q_\b})$ (and the case of equality is then resolved as in \eqref{eq:betac-gap}).

Next, having an \abbr{fdt} solution as in \cite[Prop. 2.1]{DS21} except for 
$\gamma_\b \ne \frac{1}{2}$, and with the no-aging condition 
$c_{\gamma_\b}(\infty)=\alpha^2$, yields the convergence of 
$H(s)$ as $s \to \infty$, to the value in \cite[(2.20)]{DS21}.
Moreover,  for $\qo=\qs^2$ and pure $p$-spins,  it follows from \eqref{def:vt-new}-\eqref{def:w-s} that 
\[
\bv (\alpha \qs,\alpha \qs) =\Es (\b_0) \frac{\alpha^p}{\qs^p} = \GS (\qs) \frac{\alpha^p}{\qs^p} = \GS (\alpha) 
\]
(where we also utilized \eqref{def:Es},  the homogeneity of $H_{\BJ}(\cdot)$ and \eqref{def:GS-q}).  The
value \eqref{eqH-lim} for $H(\infty)$ follows.
\end{proof}

\section{Existence,  confinement and continuity}\label{sec:ex} 

Hereafter,  for any finite $N,\bar{N}$ and $\psi : \B^N_\star \to \R^{\bar{N}}$ we denote by
\begin{align}\label{def:norm-N}
\| \psi \|_\infty := \sup \{ \| \psi (\bx)\| :   \bx \in \B^N_\star  \big\} \,,  \qquad
\| \psi \|_{\rm L} := \sup \Big\{ \frac{\|\psi(\by) - \psi(\bx) \|}{\|\by-\bx\|} :  \by \ne \bx \in \B^N_\star \Big\} \,,
\end{align}
the supremum and Lipschitz norms \abbr{wrt} the Euclidean norm.  For $\Psi(\bx)$ from
$\B^N_\star$ to the set of $N'$-dimensional matrices,  we define $\|\Psi\|_\infty$
via the spectral norms
$\|\Psi(\bx)\|_2:=\sup\{ \|\Psi(\bx) \bu\| : \|\bu\|=1,  \bu \in \R^{N'} \}$ 
at $\bx
\in \B^N_{\star}$,  as in \eqref{def:norm-N},  and recall for $\bar{N}=1$ and 
$\psi \in C_0^2(\B^N_\star)$ the control on various norms 
\begin{equation}\label{eq:L-op}
\frac{1}{r_\star^2 N} \|\psi\|_\infty \le \frac{1}{r_\star \sqrt{N}} \|\psi\|_{\rm L} \le \frac{1}{r_\star \sqrt{N}} \|\nabla \psi \|_\infty \le
\|\nabla \psi \|_{\rm L} \le \|{\rm Hess}(\psi)\|_\infty 
\end{equation}
via the supremum norm of the $N$-dimensional Hessian matrix of $\psi$.  Equipped with these norms
we prove the existence of a strong solution of \eqref{diffusion-phi}.  We 
also show that when $\frac{1}{\sqrt{N}} \|\nabla \varphi_N \|_\infty$ are uniformly bounded,  the solution stays away 
from the blow-up of $f'(\cdot)$ at the boundary of $\B^N_\star$,  
up to an $e^{-\Omega(N)}$-small probability,  and for $f_\ell(\cdot)$ of \eqref{eq:fdef},
these solutions are further confined,  as $\ell \uparrow \infty$,  to shrinking annuli around $\SN$.
\begin{prop} \label{prop:existence} 
Fix $f=f_\ell$ as in \eqref{eq:fdef} with $f_0(\cdot)$ of  locally Lipschitz 
derivative on $[0,r_\star^2)$.
\newline
(a).  For any $N,  \b <\infty$,  $\bar r > r_\star$,  $\varphi_N \in C_0^2(\B^N)$ and $\bxo \in \B^N(\sqrt{N})$, 
the dynamics \eqref{diffusion-phi} admits a unique strong solution $\{\bx_t,  t \ge 0 \} \subset \B^N_\star$.
\newline
(b).  If $\| \nabla \varphi_N\|_\infty \le 3 \kappa r_\star \sqrt{N}$ for $\kappa$ finite and all $N$,  then 
for $\tau_r := \inf\{ t \ge 0 :N^{-1/2} \| \bx_t \| \ge r \}$ and some $r_1 \in (1,r_\star)$ that depends only on 
$\kappa,r_\star$, $f'(\cdot)$,
\begin{equation}\label{eq:exp-conf-phi}
\limsup_{N \to \infty} \frac{1}{N} \log \P\big( \tau_{r_1} \le T ) < 0 \,,  \qquad \forall T < \infty\,.
\end{equation}
(c).  For $\bxo \in \SN$,  we further have \eqref{eq:exp-conf-phi} holding for
$\tau_c := \inf\{ t \ge 0 :  N^{-1/2} \| \bx_t \| \notin I_\ell \}$,  some $c<\infty$ and all 
large enough $\ell$,  where $I_\ell := (1-\frac{c}{\ell},1+\frac{c}{\ell} )$
is adjusted to $f_\ell(\cdot)$ of \eqref{eq:fdef}.
\end{prop}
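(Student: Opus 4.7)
The plan is to route all three parts through the scalar process $Y_t := N^{-1} \|\bx_t\|^2$, which by It\^o's formula applied to \eqref{diffusion-phi} obeys
\begin{equation*}
 dY_t = \Big[\,1 - 2 f'(Y_t)\,Y_t - \frac{2\b}{N} \langle \bx_t,\nabla \varphi_N(\bx_t)\rangle\,\Big]\,dt + \frac{2}{N}\langle \bx_t, d\BB_t\rangle\,,
\end{equation*}
a one-dimensional diffusion with martingale quadratic variation $(4 Y_t/N)\,dt$, hence noise of order $N^{-1/2}$ and drift dominated by $-2 f'(Y_t) Y_t$ as $Y_t \uparrow r_\star^2$.

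For part (a), I would begin with the standard truncation: replace $f'$ by its cut-off $f'_n$ at argument $(r_\star - 1/n)^2$ to obtain globally Lipschitz coefficients on $\R^N$, yielding unique strong global solutions $\bx^{(n)}_\cdot$ by Picard iteration. Taking $V(\bx) := f(N^{-1}\|\bx\|^2)$ as a Lyapunov function, the hypothesis \eqref{eq:fdef} forces $(r_\star^2 - r)f_0'(r) \to \infty$, hence $f_0'(r) \gtrsim (r_\star^2 - r)^{-1}$ near $r_\star^2$, and integration gives $f_0(r) \to \infty$; so $V \to \infty$ at the boundary of $\B^N_\star$. An It\^o expansion of $V(\bx^{(n)}_t)$ via the displayed SDE shows that the dominant term $-2 f'(Y_t)^2 Y_t$ renders $V(\bx^{(n)}_t)$ a local supermartingale once $Y_t$ crosses some fixed threshold $r_0^2 < r_\star^2$ independent of $n$. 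A standard Dynkin argument then sends the exit times of $\bx^{(n)}_\cdot$ from $\B^N((r_\star - 1/n)\sqrt{N})$ to $+\infty$ almost surely, and the solutions patch into a global strong solution in $\B^N_\star$; pathwise uniqueness is inherited from local Lipschitz continuity of $f'$ and $\nabla\varphi_N$ on the interior.

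For part (b), I would pick $r_1 \in (1, r_\star)$ large enough that $2 f'(r_1^2)\,r_1^2 \ge 2 + 6\kappa r_\star^2 \b$, possible since $f'(r)\,r \uparrow \infty$ as $r \uparrow r_\star^2$. The hypothesis $\|\nabla\varphi_N\|_\infty \le 3\kappa r_\star \sqrt{N}$ yields $|\langle \bx_t,\nabla\varphi_N(\bx_t)\rangle| \le 3\kappa r_\star N \sqrt{Y_t}$, so on $\{Y_t \ge r_1^2\}$ the drift of $Y_t$ in the displayed SDE is bounded above by $-1$. Applying Doob's inequality to the exponential supermartingale $\exp\bigl(\lambda N\,(Y_{t \wedge \tau_{r_1}} - Y_0)\bigr)$ for small $\lambda > 0$ (whose It\^o correction of order $\lambda^2$ is beaten by the $O(1)$ negative drift once $Y_t$ sits in the unstable region) produces $\P(\tau_{r_1} \le T) \le e^{-c(T,r_1) N}$, uniformly in $\varphi_N$ obeying the stated bound. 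For part (c), the part~(b) estimate first restricts us to $Y_\cdot \in [0, r_1^2]$ off an $e^{-O(N)}$-event. On this compact set $f_0'$ is bounded and $f_\ell'(Y) = 2\ell(Y - 1) + f_0'(Y)$, so the drift of $Y_t$ reads $1 - 4\ell(Y_t - 1) Y_t - 2 f_0'(Y_t) Y_t - (2\b/N)\langle \bx_t, \nabla\varphi_N\rangle$, whose three non-$\ell$ terms are bounded by some $C = C(\b,\kappa,f_0,r_1)$ independent of $\ell$. Setting $c := 4C$, whenever $Y_t - 1 \ge c/\ell$ the $\ell$-linear term forces the drift below $-3C$, and symmetrically whenever $Y_t - 1 \le -c/\ell$ it forces the drift above $+3C$. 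Applied to $(Y_t - 1)_+$ and $(1 - Y_t)_+$ separately, the exponential-martingale estimate of (b) yields $\P(\tau_c \le T) \le e^{-c' N}$ for all sufficiently large $\ell$.

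The main technical hurdle is to make the exponential-in-$N$ rate in part (c) uniform in $\ell$, since the restoring strength grows like $\ell$ while the target window $I_\ell$ shrinks like $1/\ell$. This should come out in the wash because the noise and non-restoring drift of $Y_t$ remain $O(1)$ in $\ell$, so the exponential-Chebyshev inequality with an $N$-linear rate goes through without $\ell$-dependent deterioration, supplying the confinement claim needed in Remark \ref{rmk:Uf-bd} and the downstream applications to Propositions \ref{prop:ell-lim} and \ref{prop:nu-cont}.
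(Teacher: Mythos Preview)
Your route through the squared radius $Y_t = N^{-1}\|\bx_t\|^2$, a Lyapunov function for (a), and exponential supermartingales for (b)--(c) is genuinely different from the paper's. There the radius $K_N(t) = N^{-1/2}\|\bx_t\|$ is bounded above by an explicit one-dimensional diffusion $\hat{K}_N$ reflected at $\tfrac12$, with drift $g(r;b)=b-f'(r^2)r$ and noise $N^{-1/2}\,dW$; part (a) then follows from the Feller boundary classification via the scale function (infinite at $r_\star$ precisely because $(r_\star^2-r)f'(r)\to\infty$), and (b)--(c) from comparing $\hat{K}_N$ to driftless reflected Brownian motion on the buffer zone where $g\le 0$, plus the reflection principle. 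The paper's route involves only $f'$, never $f''$.

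This matters because your part (a) has a real gap. Applying It\^o to $V=f(Y_t)$ produces the correction $\tfrac{2Y_t}{N} f''(Y_t)$, and the hypotheses --- $f_0'$ locally Lipschitz on $[0,r_\star^2)$ with $(r_\star^2-r)f_0'(r)\to\infty$ --- place no control on $f_0''(r)$ relative to $f_0'(r)^2$ near the boundary. For instance $f_0'(r)=(r_\star^2-r)^{-2}\bigl(2+\sin((r_\star^2-r)^{-10})\bigr)$ satisfies all the assumptions yet has $|f_0''(r)|/f_0'(r)^2$ of order $(r_\star^2-r)^{-9}$ along subsequences, with oscillating sign, so the It\^o correction swamps your ``dominant term'' $-2f'(Y_t)^2 Y_t$ and $V(\bx_t)$ is not a supermartingale above any threshold. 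The fix is either to choose a Lyapunov function whose second derivative you control a priori --- e.g.\ $-\log(r_\star^2 - Y)$, for which the blow-up assumption on $f'$ directly dominates the It\^o correction --- or to work with $K_N$ as the paper does, where $f''$ never enters.

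Two smaller points. In (b) the drift of $Y_t$ is not negative on all of $[Y_0, r_1^2]$, so $\exp\bigl(\lambda N(Y_{t\wedge\tau_{r_1}}-Y_0)\bigr)$ is not a supermartingale as written; you need a buffer $[r_0^2,r_1^2]$ with uniformly negative drift and an excursion argument, or simply the paper's Brownian comparison. In (c) the exponential-in-$N$ rate need \emph{not} be uniform in $\ell$: the proposition asks only for exponential decay at each fixed large $\ell$, and the paper's rate $Nc^2/(8T\ell^2)$ already suffices, so the hurdle you flag in your last paragraph is not there.
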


\begin{proof} (a).  As in \cite[proof of Prop. 2.1]{BDG2},  introducing bounded 
globally Lipschitz functions $\phi_k(\cdot)$ on $\R^N$ 
such that $\phi_k(\bx)=\bx$ on $\B^N(r_k \sqrt{N})$ for some $r_k \uparrow r_\star$, 
yields unique strong solutions \purple{$\bx^{\lak}_t$} of \eqref{diffusion-phi} up to the non-decreasing exit times
\begin{equation}\label{def:tau-k}
\tau_k := \inf\{ t \ge 0 : \|\bx^{\lak}_t \| \ge r_k \sqrt{N} \}\,. 
\end{equation}
Setting $\rho_N := 1 + \frac{\b}{\sqrt{N}} \| \nabla \varphi_N\|_\infty$,  \red{we next show that 
for some $\delta(r_k,b,T) \downarrow 0$ as $k \uparrow \infty$,
\begin{equation}\label{eq:tau-k-bd}
\P(\tau_k \le T) \le \delta(r_k,\rho_N,T),   \qquad \forall N,T < \infty 
\end{equation}
from which the} a.s.  unique strong solution of \eqref{diffusion-phi} \red{on $[0,\infty)$ follows.  Specifically, 
apply} Ito's formula for 
\begin{equation}\label{def:KN-uk}
\purple{Z_N (t)} := \frac{1}{\sqrt{N}} \|\bx^{\lak}_{t \wedge \tau_k}\|
\end{equation}
\red{and recall} that $\nabla \|\bx\| = \frac{1}{\|\bx\|} \bx$ and $\Delta \|\bx\| = \frac{N-1}{\|\bx\|}$,  \red{to get that}
\begin{equation}\label{eq:KN-diff}
Z_N(s) = Z_N(0) + \int_0^{s \wedge \tau_k} 
\Big[-\frac{\b}{\sqrt{N}} \Big\langle \nabla \varphi_N(\bx^{\lak}_t),\frac{\bx^{\lak}_t}{\|\bx^{\lak}_t\|} \Big\rangle 
-f'\big(Z_N(t)^2\big) Z_N(t) + \frac{(N-1)}{2 N Z_N(t)}  \Big] dt + \frac{1}{\sqrt{N}} W_{s \wedge \tau_k} \,,
\end{equation}
for the stopped standard Brownian motion 
\[
W_{s \wedge \tau_k} :=  \int_0^{s \wedge \tau_k} \frac{1}{\|\bx^{\lak}_t\|} \langle \bx_t^{\lak} d\BB_t \rangle \,.
\]
\purple{Next,  consider for $N \ge 1$ and $b \in \R$, } the following \emph{reflected at $\frac{1}{2}$} diffusions on $(\frac{1}{2},r_\star)$,
\begin{equation}\label{eq:KN-bd}
Z_{N,b} (s) = 1 + \int_0^{s} g(Z_{N,b} (t);b) dt + \frac{1}{\sqrt{N}} W_{s} \,,  \qquad 
g(r;b) := b -f'(r^2) r  \,.
\end{equation}
\red{Comparing \eqref{eq:KN-diff} and \eqref{eq:KN-bd},  we see that} $Z_N(s) \le \purple{Z_{N,\rho_N} (s)}$ up to 
\red{the first hitting time}
\begin{equation}\label{def:tau-bar-k}
\hat{\tau}_k := \inf\{t \ge 0: Z_{N,\rho_N}(t) \ge r_k \} \,.
\end{equation}
In particular,  $\hat{\tau}_k \le \tau_k$,  \red{so \eqref{eq:tau-k-bd} follows upon showing} that a.s. 
$\hat{\tau}_k  \uparrow \infty$.  Namely,  that the boundary point $r_\star$ is inaccessible for 
the reflected diffusion $Z_{N,\rho_N}$.  Now,  by \eqref{eq:fdef} we have for some  
$r_o=r_o(b) \in (1, r_\star)$ and $\hat \kappa(r_o) <\infty$,   that for any finite $b$,
\begin{equation}\label{eq:fp-bd}
g(r;b) 
\le  - \frac{1}{r_\star - r} 
\quad \hbox{for} \quad r > r_o \quad 
\hbox{and otherwise} \quad
b - \hat \kappa \le g(r;b) \,.
\end{equation}
Thus,  for $t \ge r_o(\rho_N)$,  
\[
v(t) := -2 N \int_{r_o}^t g(x;\rho_N) dx \ge 2N \log[(r_\star-r_o)/(r_\star-t)]
\]
and so the scale function $p(r):=\int_{r_o}^r e^{v(t)} dt$ of $Z_{N,\rho_N}(\cdot)$ of \eqref{eq:KN-bd} is infinite at $r=r_\star^-$.
Further,  with $v(t)$ bounded above uniformly over $\frac{1}{2} \le t \le r_o(\rho_N)$,  clearly $p(\frac{1}{2}^+) > -\infty$.  
By the general 
theory of boundary behavior in \purple{one-dimensional} diffusions (see,  e.g. \cite[Prop.  5.5.22(b)]{KS}), 
\red{almost surely} the \abbr{iid} excursions of $Z_{N,\rho_N}(\cdot)$ away from $\frac{1}{2}$,  which are of positive
expected time,  stay away from $r_\star$,  with $r_\star$ thus inaccessible for the reflected diffusion. 

\medskip
\noindent
(b).  Here $\rho_N \le 1 + 3 \b \kappa r_\star =: b$,  so as reasoned in part (a),  it suffices for 
\eqref{eq:exp-conf-phi}
to show that $\P(\hat{\tau}_{r_1} \le T) \le C e^{-N \delta}$ for 
$\hat{\tau}_r := \inf\{t \ge 0: \purple{Z_{N,b}}(t) \ge r\}$ and some $\delta>0$,  $1<r_1<r_\star$.
In view of \eqref{eq:fdef},  there exists $1<r_o<r_\star$ such that 
$g(x;b) \le 0$ for 
all $x \ge r_o$.  Clearly $\P(\hat{\tau}_r \le T)$ is bounded above,  for any $r > r_o$,  
by moving to $r_o$ both \purple{$Z_{N,b}(0)$} and the reflection point $\frac{1}{2}$,  while also replacing the
drift $g(x;b) \le 0$ for $x \ge r_o$,  by a zero drift.  Upon doing so,  by the reflection principle,  we have for some
$C<\infty$ and any $r_1 > r_o$,  
\begin{equation}\label{eq:sup-bd}
\P(\hat{\tau}_{r_1} \le T) \le \P(\sup_{s \le T} \{ |W_s| \} \ge \sqrt{N} (r_1-r_o) ) \le C e^{-\frac{N(r_1-r_o)^2}{2T}} \,,
\end{equation}
as needed for completing the proof of \eqref{eq:exp-conf-phi}. 

\noindent
(c).  Fixing $b$ and $r_1$ as in part (b),  set $c = 2(b +b')$ where
$b':=\sup \{ x |f_0'(x^2)|  : x \in [0,r_1]\}<\infty$.  Note that 
$I_\ell \subseteq (\frac{1}{2},r_1)$ for all
$\ell \ge \ell_\star
$,  whereupon $\{\rho_N \le b \}$ and $\|\bxo\|=\sqrt{N}$ imply that 
$\tau_c \ge \hat{\tau}^{(+)}_{1 + \frac{c}{\ell}}  \wedge \hat{\tau}^{(-)}_{1-\frac{c}{\ell}}$ for the stopping times 
$\purple{\hat{\tau}}^{(\pm)}_r := \inf\{t \ge 0: \purple{Z_{N,\pm b}}(t) = r \}$,  \purple{with  
$Z_{N,b}(\cdot)$ of \eqref{eq:KN-bd} no-longer reflected at $\frac{1}{2}$}.
Further,  for $\ell \ge \ell_\star$ and $f(\cdot)=f_\ell(\cdot)$ of \eqref{eq:fdef},
\begin{align*}
g(x;+b)= & +b - x f'_0(x^2) - 2 \ell (x-1) (x+1) x \le 0\,, \qquad \forall x \in [1+\frac{c}{2\ell},1+\frac{c}{\ell}] \,,\\
g(x;-b)= & -b - x f'_0 (x^2) + 2 \ell (1-x) (x+1) x \ge 0 \,, \qquad \forall x \in [1-\frac{c}{\ell},1-\frac{c}{2\ell}] \,. 
\end{align*}
We thus have as in \eqref{eq:sup-bd},  that for such $c$ and $\ell$, 
\[
\P(\hat{\tau}^{(\pm)}_{1\pm \frac{c}{\ell}} \le T) \le \P(\sup_{s \le T} \{W_s\} \ge \frac{c}{2\ell} \sqrt{N} ) \le C 
e^{-\frac{N c^2}{8 T \ell^2}} \,, 
\]
hence \eqref{eq:exp-conf-phi} holds for $\tau_{c}$
and any $\ell \ge \ell_\star$. 
\end{proof}

Utilizing the preceding proof,  we establish the continuity of 
$\varphi_N \mapsto \widehat\err_{N,T}(\bxo,\bxs,\varphi_N;\bU_\infty)$, 
with Lemma \ref{lem:cont-varphi} as an immediate consequence.
\begin{prop}\label{prop:cont}
For any $N,T,\b<\infty$,   $\bxo,\bxs \in \B^{N} (\sqrt{N})$,  $\varphi^{(i)}_N \in C^2_0(\B^N)$ and $f(\cdot)$ as in \eqref{eq:fdef},
\begin{equation}\label{eq:cont-phi}
\sup_{\bU_\infty} 
\big|\widehat\err_{N,T}(\bxo,\bxs,\varphi^{(2)}_N;\bU_\infty)
- \widehat\err_{N,T}(\bxo,\bxs,\varphi^{(1)}_N;\bU_\infty)\big|
\le \gamma_{T,\b}\big(\|\nabla \varphi^{(1)}_N \|_{\rm L},\frac{1}{\sqrt{N}} \|\nabla (\varphi^{(2)}_N - \varphi_N^{(1)}) 
\|_{\infty}\big)\,,
\end{equation}
for some $\gamma_{T,\b}(\kappa,\delta) \to 0$ as $\delta \to 0$,  for any fixed $\kappa<\infty$.
\end{prop}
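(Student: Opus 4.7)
The plan is to couple the two strong solutions of \eqref{diffusion-phi} by driving them with a common Brownian motion $\BB_t$ from the common starting point $\bxo$, bound the pathwise difference $\mathbf{d}_t := \bx_t^{(2)} - \bx_t^{(1)}$ via Gronwall on a high-probability confinement event, and translate this into closeness of the four empirical functionals entering $\err_{N,T}$ in \eqref{eq:err}. Set $\kappa := \|\nabla\varphi_N^{(1)}\|_{\rm L}$ and $\delta := \tfrac{1}{\sqrt{N}}\|\nabla(\varphi_N^{(2)} - \varphi_N^{(1)})\|_\infty$. By \eqref{eq:L-op} one has $\|\nabla\varphi_N^{(1)}\|_\infty \le \kappa r_\star\sqrt{N}$, hence $\|\nabla\varphi_N^{(2)}\|_\infty \le (\kappa r_\star + \delta)\sqrt{N} \le 3\kappa r_\star\sqrt{N}$ once $\delta \le 2\kappa r_\star$; Proposition \ref{prop:existence}(b) applied to each $\varphi_N^{(i)}$ then yields, for each $r_1$ between some $r_o=r_o(\kappa,r_\star,f',\beta)$ and $r_\star$, a common confinement event $\Omega_N(r_1)$ on which both $\bx_t^{(i)}$ remain in $\B^N(r_1\sqrt{N})$ for $t \le T$, with $\P(\Omega_N(r_1)^c) \le C\exp\bigl(-N(r_1-r_o)^2/(2T)\bigr)$ along the lines of \eqref{eq:sup-bd}.

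On $\Omega_N(r_1)$ the drift $-f'(\|\cdot\|^2/N)\,\cdot$ is Lipschitz with some constant $L_f(r_1)$, and $\nabla\varphi_N^{(1)}$ is Lipschitz with constant $\kappa$. Inserting $\pm\,\nabla\varphi_N^{(1)}(\bx_u^{(2)})$ into the integrand of \eqref{diffusion-phi} gives on this event
\begin{equation*}
\|\mathbf{d}_t\| \,\le\, (L_f(r_1) + \beta\kappa)\int_0^t \|\mathbf{d}_u\|\,du \,+\, \beta T\sqrt{N}\,\delta,
\end{equation*}
so that Gronwall yields $\sup_{t \le T}\|\mathbf{d}_t\|/\sqrt{N} \le C(T,\beta,\kappa,r_1,f')\,\delta$ deterministically on $\Omega_N(r_1)$.

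For the four empirical quantities, Cauchy--Schwarz together with $\|\bx_s^{(i)}\|, \|\bxs\| \le r_1\sqrt{N}$ on $\Omega_N(r_1)$ delivers $O(\delta)$-bounds on $\|C_N^{(2)} - C_N^{(1)}\|_T$ and $\|q_N^{(2)} - q_N^{(1)}\|_T$; since the Brownian motion is shared, $\chi_N^{(2)}(s,t) - \chi_N^{(1)}(s,t) = N^{-1}\langle \mathbf{d}_s,\BB_t\rangle$ is $O(\delta)$ in expectation via $\E\|\BB_T\|^2 = NT$. For $\widehat\varphi_N$, split $\varphi_N^{(2)}(\bx_s^{(2)}) - \varphi_N^{(1)}(\bx_s^{(1)})$ as $[\varphi_N^{(2)}-\varphi_N^{(1)}](\bx_s^{(2)}) + [\varphi_N^{(1)}(\bx_s^{(2)}) - \varphi_N^{(1)}(\bx_s^{(1)})]$ and bound the first summand by $\|\nabla(\varphi_N^{(2)}-\varphi_N^{(1)})\|_\infty\,r_1\sqrt{N}=r_1 N\delta$ via radial integration from the origin (using $\varphi_N^{(i)}(\mathbf{0})=0$), and the second by $\|\nabla\varphi_N^{(1)}\|_\infty\|\mathbf{d}_s\| \le \kappa r_\star\sqrt{N}\|\mathbf{d}_s\|$. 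Off $\Omega_N(r_1)$ the $\wedge 1$ truncation in \eqref{eq:err} bounds the full difference by $4$, yielding
\begin{equation*}
\sup_{\bU_\infty}\bigl|\widehat\err_{N,T}(\bxo,\bxs,\varphi_N^{(2)};\bU_\infty) - \widehat\err_{N,T}(\bxo,\bxs,\varphi_N^{(1)};\bU_\infty)\bigr| \,\le\, C(T,\beta,\kappa,r_1,f')\,\delta + 4\,\P(\Omega_N(r_1)^c).
\end{equation*}

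The main obstacle is that $f'(\cdot)$ blows up at $\partial\B^N_\star$, a priori destroying any uniform Lipschitz control on the drift; Proposition \ref{prop:existence} is tailored precisely to confine both coupled paths to $\B^N(r_1\sqrt{N})$ with $r_1<r_\star$, at the cost of a trade-off, since $L_f(r_1)$ grows as $r_1 \to r_\star^-$ while $\P(\Omega_N(r_1)^c)$ shrinks. Letting $r_1=r_1(\delta) \to r_\star^-$ sufficiently slowly as $\delta \to 0$ (so that the exponentially small tail and the exponentially-in-$L_f T$ Gronwall factor are jointly controlled) produces a majorant $\gamma_{T,\beta}(\kappa,\delta) \to 0$ as $\delta \to 0$ for each fixed $\kappa$, giving \eqref{eq:cont-phi}.
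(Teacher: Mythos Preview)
Your overall architecture---couple the two diffusions with a common Brownian motion, confine both paths to a ball $\B^N(r_1\sqrt{N})$, run Gronwall on the difference, then bound the four empirical functionals---is exactly what the paper does. The gap is in how you control the complementary confinement probability.

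You invoke the crude reflection bound \eqref{eq:sup-bd} from the proof of Proposition~\ref{prop:existence}(b), namely $\P(\Omega_N(r_1)^c)\le C\exp\bigl(-N(r_1-r_o)^2/(2T)\bigr)$, and then propose to send $r_1=r_1(\delta)\to r_\star^-$ as $\delta\to 0$. But for \emph{fixed} $N$ this bound is monotone decreasing in $r_1$ only down to the positive floor $C\exp\bigl(-N(r_\star-r_o)^2/(2T)\bigr)$; it does \emph{not} vanish as $r_1\to r_\star$. The bound \eqref{eq:sup-bd} was obtained by discarding the (strongly negative) drift near $r_\star$, so it cannot see the inaccessibility of that boundary. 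Hence your final majorant $C(T,\beta,\kappa,r_1,f')\,\delta+4\P(\Omega_N(r_1)^c)$ never drops below a strictly positive residue, and $\gamma_{T,\beta}(\kappa,\delta)\to 0$ fails.

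The paper avoids this by appealing not to part (b) but to the inaccessibility argument of Proposition~\ref{prop:existence}(a): since the scale function diverges at $r_\star$, one has $\tau_k\uparrow\infty$ a.s., so $\P(\Aa_k^c)=\P(\tau_k^{(1)}\wedge\tau_k^{(2)}\le T)\to 0$ as $r_k\to r_\star$, at a rate governed only by the drift parameter $\rho_N\le 1+\beta(r_\star\kappa+\delta)$. One then picks $k=k(\delta)\to\infty$ slowly enough that the Gronwall factor $\exp(C_N^{(r_k)}T)\cdot\delta\to 0$ while simultaneously $\P(\Aa_{k(\delta)}^c)\to 0$. Replace your use of \eqref{eq:sup-bd} with this inaccessibility argument and the rest of your proof goes through.
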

\begin{proof}[Proof of Lemma \ref{lem:cont-varphi}] Fix $T,\b,N,\red{d}<\infty$,  $\bxo, \bxs \in \B^N(\sqrt{N})$,
$\varphi_N,\psi^{(i)}_N \in C_0^2(\B^N)$
and non-random $\bU_\infty$.  Let
$\varphi_N^{(1)}=\varphi_N$ and $\varphi_N^{(2)}=\varphi_N + \sum_i \red{\eta_n^{(i)}} \psi^{(i)}_N$.  Note that 
$\| \nabla \varphi_N^{(1)} \|_{\rm L} < \infty$ and \red{for sequences $\eta^{(i)}_n \to 0$,  at each $i \le d$,} 
\[
\|\nabla (\varphi^{(2)}_N - \varphi_N^{(1)})\|_{\infty} \le \Big( \red{\sum_{i=1}^d |\eta_n^{(i)}|} \Big) \red{\max_{i \le d}} \big\{ \| \nabla \psi_N^{(i)} \|_\infty \big\} 
\to 0 \,,
\]
with \eqref{eq:varphi-cont} thus an immediate consequence of \eqref{eq:cont-phi}.
\end{proof}
\begin{proof}[Proof of Proposition \ref{prop:cont}]
Fixing $\varphi^{(i)}_N \in C^2_0(\B^N)$ construct the strong solutions $\Bx^{(i)}_t: [0,T] \mapsto \B^N_\star$ 
of \eqref{diffusion-phi},  per Proposition~\ref{prop:existence}(a),
for $f$ as in \eqref{eq:fdef} and potentials $\varphi_N^{(i)}$ respectively, 
out of the same Brownian path $t \mapsto \BB_t$ \red{and starting point $\bxo$}.   Let
$\|\cdot\|_T$ denote the sup-norm on $[0,T]^2$ and fixing $r_k \uparrow r_\star$
consider the stopping times $\tau_k^{(i)}$ of \eqref{def:tau-k} \purple{for the $\Bx^{\lak}_t$ 
approximations} of $\Bx^{(i)}_t$.  It then 
follows from \eqref{eq:err},  \eqref{eq:err-hat} and the triangle inequality that 
\begin{align}
| \widehat\err_{N,T}(\bxo,\bxs,\varphi^{(2)}_N;\bU_\infty)-
 & \widehat\err_{N,T}(\bxo,\bxs,  \varphi^{(1)}_N;\bU_\infty)|  \le 4 \,  \P(\Aa_k^c)  
  + \E_k \big[\| \phiN^{(2)}-  \phiN^{(1)}\|_T \big] \nonumber \\
  &+\E_k \big[\|q^{(2)}_N-q_N^{(1)}\|_T  \big]
 + \E_k \big[ \|C^{(2)}_N-C^{(1)}_N\|_T \big] 
 +\E_k \big[ \|\chi^{(2)}_N-\chi_N^{(1)}\|_T \big] \,,
 \label{eq:triangle-ineq}
\end{align}
 where $(C_N^{(i)},\chi_N^{(i)},q_N^{(i)},  \phiN^{(i)})$ are 
evaluated at the induced solutions $\Bx_t^{(i)}$ and $\E_k[\cdot]$ denotes the expectation 
restricted to the event $\Aa_k := \{\tau_k^{(1)} \wedge \tau_k^{(2)} > T\}$.  \red{Next,  note that for any $N,T$,
\[
c_{N,T}^2 := \E \Big[\sup_{t \le T} \frac{1}{N} \|\BB_t\|^2\,\Big] \le c_{1,T}^2 < \infty 
\]
and} recall that $\sup_{t} \{ \| \Bx_t^{(i)}\| \}  \le r_\star \sqrt{N}$ \red{while
$\| \bxs\| \le \sqrt{N}$.  Hence,} setting 
\[
\purple{\Gamma_{N,k}}  :=  \E \big[ \| D_{N,k} \|_T^2 \big]^{1/2},  \qquad 
\purple{D_{N,k}}(t) := \frac{1}{\sqrt{N}} \indic_{\Aa_k} \,  \|\bx_t^{(2)}-\bx_t^{(1)}\|\,, 
\]
\red{we} deduce by Cauchy-Schwarz that 
\[
\E_k \big[\|q^{(2)}_N-q_N^{(1)}\|_T \big] \le \Gamma_{N,k} \,,\quad
 \E_k \big[ \|C^{(2)}_N-C^{(1)}_N\|_T \big] \le 2 r_\star \Gamma_{N,k}  \,,\quad
\E_k  \big[ \|\chi^{(2)}_N-\chi_N^{(1)}\|_T \big] \le c_{1,T} \Gamma_{N,k} \,.
\]
In view of \eqref{eq:Hn-gen},  by the triangle inequality we have that for any $s \ge 0$, 
\[
| \phiN^{(2)} (s) -  \phiN^{(1)}(s) | \le \frac{1}{N} \big[ \|\varphi_N^{(1)}\|_{\rm L}
\|\bx_s^{(2)} - \bx_s^{(1)}\| + \|\varphi_N^{(2)} - \varphi_N^{(1)} \|_\infty \big] \,.
\]
Thus,  by Cauchy-Schwarz and \eqref{eq:L-op},  
\begin{equation}\label{eq:varphi-diff}
\E_k \big[\| \phiN^{(2)} -  \phiN^{(1)}\|_T  
 \big] \le r_\star \big[ \|\nabla \varphi^{(1)}_N \|_{\rm L} \Gamma_{N,k} +
 \frac{1}{\sqrt{N}} \|\nabla ( \varphi^{(2)}_N - \varphi^{(1)}_N ) \|_{\infty} \big] \,.
\end{equation}
By the union bound and \red{ \eqref{eq:tau-k-bd}} we deduce that
$\P(\Aa^c_k) \to 0$ as $k \to \infty$,  
uniformly over $\bxo \in \B^N(\sqrt{N})$,  at a rate which in view of \eqref{eq:L-op} depends only on 
\[
\frac{\b}{\sqrt{N}} \max_{i=1,2} \{ \|\nabla \varphi^{(i)}_N\|_\infty \} \le 
\b \big[ r_\star \| \nabla \varphi^{(1)}_N\|_{\rm L} + \frac{1}{\sqrt{N}} \|\nabla (\varphi^{(2)}_N - \varphi_N^{(1)} )\|_\infty \big] \,.
\]
Plugging the preceding bounds in \eqref{eq:triangle-ineq},  it thus suffices bound \purple{$\Gamma_{N,k}$}, 
in the form of the \abbr{rhs} of \eqref{eq:cont-phi},  per fixed $k$.  To this end,  note that up to time 
$\tau_k^{(1)} \wedge \tau_k^{(2)}$ one has that
\purple{each $\Bx^{(i)}_t$ coincides with the corresponding} $\bx^{\lak}_{t \wedge \tau^{(i)}_k} \in \B^{N}(r_k \sqrt{N})$.  Thus,  setting 
$\purple{Z^{(i)}_N(t)}$,  \red{$i=1,2$,  analogously to \eqref{def:KN-uk},}
yields for $t \le \tau_k^{(1)} \wedge \tau_k^{(2)}$,  the identity
\[
\Bx_t^{(2)}-\Bx_t^{(1)} = \int_0^t \Big[ 
\b \nabla \varphi^{(1)}_N (\bx^{(1)}_s)  -\b \nabla \varphi^{(2)}_N (\bx^{(2)}_s)
+ f'(Z_N^{(1)}(s)^2) \Bx_s^{(1)} - f'(Z^{(2)}_N(s)^2) \Bx_s^{(2)}  \Big] ds \,.
\]
Considering the Euclidean norm of both sides 
multiplied by $\frac{1}{\sqrt{N}} \indic_{\Aa_{k}}$,
it follows by the triangle inequality that
\begin{align}
D_{N,k}(t) &\le 
\frac{\b}{\sqrt{N}} \indic_{\Aa_{k}} 
\int_0^t \|  \nabla \varphi^{(2)}_N (\bx^{(2)}_s)  - \nabla \varphi^{(1)}_N (\bx^{(1)}_s)\| ds \nonumber \\
&
+2 r_\star^2 \|f'\|^{(r_k)}_{\rm L}  \indic_{\Aa_{k}} 
 \int_0^t | Z_N^{(2)} (s) - Z^{(1)}_N(s) | ds + \|f'\|_\infty^{(r_k)} \int_0^t D_{N,k} (s) ds
 \,,
 \label{eq:Delta-bd}
\end{align}
where $\|f'\|^{(r)}_\infty$ and $\|f'\|^{(r)}_{\rm L}$ denote the finite supremum and Lipschitz norms
of $f'(\cdot)$ on $[0,r^2]$.  Similarly to \eqref{eq:varphi-diff},  we also have that
\[
\frac{1}{\sqrt{N}}\,
\indic_{\Aa_{k}} \|  \nabla \varphi^{(2)}_N (\bx^{(2)}_s)  - \nabla \varphi^{(1)}_N (\bx^{(1)}_s)\|
 \le \| \nabla \varphi^{(1)}_N \|_{\rm L} D_{N,k}(s) + \frac{1}{\sqrt{N}}  
\| \nabla (\varphi^{(2)}_N -\varphi_N^{(1)} ) \|_\infty \,.
\]
Moreover,  by the triangle inequality,  for any $s \le T$,
\begin{equation}\label{eq:KN-DN-diff}
\indic_{\Aa_k}  | Z_N^{(2)}(s) - Z^{(1)}_N(s) | \le D_{N,k} (s) \,.
\end{equation}
Substituting these bounds in \eqref{eq:Delta-bd} yields the differential inequality
\begin{equation}\label{eq:Delta-pre-Gronwal}
D_{N,k}(t) \le C'_{N} t  + C^{(r_k)}_{N} \int_0^t D_{N,k}(s) ds,  \qquad D_{N,k}(0)=0 
\end{equation}
where $C'_{N}:= \frac{\beta}{\sqrt{N}} \| \nabla (\varphi^{(2)}_N -\varphi_N^{(1)} ) \|_\infty$ and 
$C_{N}^{(r)}:=\b \|\nabla \varphi^{(1)}_N \|_{\rm L}  + 2 r_\star^2 \|f'\|^{(r)}_{\rm L}+  \|f'\|_\infty^{(r)}$.  
Applying Gronwall's inequality,  we deduce from \eqref{eq:Delta-pre-Gronwal} that for any $k$ and $N, T,\b$
\begin{equation}\label{eq:DN-bd}
\| D_{N,k}\|_T \le C'_N T \exp(C^{(r_k)}_{N} T) \,.
\end{equation}
The non-random \abbr{rhs} of \eqref{eq:DN-bd} also bounds $\Gamma_{N,k}$ which thereby completes
the proof of \eqref{eq:cont-phi}.
\end{proof}

\red{\subsection{Reduction to finite mixtures}\label{subsec:finite-m} We shall utilize in the sequel 
the representation \eqref{eq:H0-barH} to control the effect 
of the discrepancy between $\V$ and $\V'$ in \eqref{eq:exp-err-LT}.
While changing $\qo$ to $\qo'$ requires to move $\bxo$,  the only
real constraint here is the specified angle with $\bxs = \qs \sqrt{N} \be_1$ (see Remark \ref{rem:DS21}).
Hence,  \abbr{wlog} we set hereafter $\bxo \in {\rm sp}\{\bxs,\be_2\}$ with 
$\langle \bxo, \be_2 \rangle \ge 0$.  Also,  when transforming from $H_{\bJ}$ to $H_{\BJ}^{[m]}$ of 
\eqref{potential-m},  the events $\cptq(\V)$ 
relax somewhat,  
motivating the following generalization of $\cptq(\V)$}
 and the sets $\vB_\epsilon(\V)$ of 
Definition \ref{def:vB-eps}.
\begin{defi}\label{def:ext-cond}
\red{Fix $\qs \in [0,1]$ and an orthogonal basis of $\R^N$ consisting of unit vectors $\hbxs:=\be_1$,  $\bz:=\be_2$ 
and the rows of the matrix $M \in\R^{N-2\times N}$.  Set $\bxs := \qs \sqrt{N} \hbxs \in \qs \SN$ and 
$\bxo \in {\Bbb S}_{\bxs}(\qo) \cap \sp\{\bxs,\bz\}$ such that $\langle\bxo, \bz \rangle \ge 0$.}
For $\qs>0$ consider the event 
\[
\purple{\cpteq}(\Ve):=\{ \Ha=\Ve\} := \{\hat{\Ha}=\hat{V},   \Ha_\perp = \bu \}
\]
where
$\Ha_{\perp} := - M \nabla H_{\BJ}(\bxs) \in \R^{N-2}$,  $\Ve=(\hat{V},\bu)$ and 
\begin{equation}\label{def:calH-c}
 \hat{\Ha} := 
		- \Big(\,
		 \frac{1}{N}H_{\BJ}(\bxo), \,\frac{1}{N}H_{\BJ}(\bxs), 
		\, \frac{1}{\|\bxs\|}\partial_{\hbxs} H_{\BJ}(\bxs),\,\frac{1}{\|\bxs\|}\partial_{\bz} H_{\BJ}(\bxs)\Big)\,.
\end{equation}
We similarly denote for $H_{\BJ}^{[m]}$ and $m \le \infty$ as in Definition \ref{def:H-m-coupling},
the vectors $\Ha^{[m]} = (\hat{\Ha}^{[m]},\Ha^{[m]}_{\perp})$ and 
events $\cpteqm (\Ve) := \{ \Ha^{[m]} = \Ve\}$.  In particular,  
$\cpteqm (\Ep,\Eg,\Gg,0,{\bf 0})=\cptq^{[m]}(\V)$ of \eqref{eq:cond-J}.  

For $\qs,\epsilon>0$ and $\Ve=(\Eg,\Ep,\Gg,G,{\bf u})$  we set 
\begin{align}\label{def:vBp}
		\vB'_\epsilon(\Ve) :=\Big\{(\hat{V}',{\bf u}') :  \|\hat{V}'-\hat{V}\|_\infty < \epsilon,
		 \|{\bf u}'-{\bf u}\| <\epsilon \sqrt{N} \Big\}\,,	
\end{align}
with $\vB'_\epsilon(\V):=\{ (\Ve',\qop) : \Ve' \in \vB'_\epsilon(\Ep,\Eg,\Gg,0,{\bf 0}),  |\qop-\qo| < \epsilon,  |\qop| \le \qs\}$.
When $\qs=0$ we only consider $\Ve=(E,0,0,0,{\bf 0})$,  so in that case $\cpteo(\Ve)=\cpto(E)$ and
$\vB'_\epsilon(\V)=\vB_\epsilon(E) := \{\Ep': |\Ep'-\Ep|<\epsilon\}$.
\end{defi}

\red{Recall the Hamiltonian $H^{[m]}_\BJ$ of \eqref{potential-m} for the truncated finite mixture
of \eqref{eq:nu-m-def},  and analogously to $H^{\qo}_{\BJ}$ of \eqref{eq:H0-barH},  consider
the centered fields $\widehat{H}_\BJ^{{\qo},[m]}$  
whose covariance been modified by conditioning on $\cptq(0,0,0,\qo)$.
We then} have conditionally on $\cpteq(\Ve)$,  \red{the representations}
\begin{align}\label{eq:H0-barHe}
H_{\BJ}&=H^{\qo}_{\BJ}+\bar{H}_{\Ve},  \qquad \quad \bar{H}_{\Ve} (\bx):=\E[H_{\BJ}(\bx) | \cpteq(\Ve)] \,,\\
H^{[m]}_{\BJ} &= \widehat{H}_\BJ^{{\qo},[m]}+\widehat{H}^{[m]}_{\Ve},  \quad 
\widehat{H}^{[m]}_{\Ve} (\bx) := \E[H^{[m]}_{\BJ}(\bx) | \cpteq(\Ve)] \,.
\label{eq:H0-barHe-m}
\end{align}
Appendix \ref{sec:Appendix} is devoted to the proof of our next lemma which 
provides the key control on the regularity of 
our mixed $p$-spin Gaussian fields $H_{\BJ}(\cdot)$, 
conditional on $\cpteq(\Ve)$ (in particular,  on those of  \eqref{eq:cond-J} and \eqref{eq:cond-HT}).
\begin{lem}\label{lem:G-conc}
Fix $\nu$ satisfying \eqref{eq:r-star} which is not $p$-pure,  $\qs \in [0, 1]$ and allowed $\V$
\red{(see Definition \ref{ft:cons}).}\\
(a).  For $\delta=\delta(\qo)>0$ small enough,  there exist $\kappa=\kappa(\delta,\nu,\qs,\V)<\infty$ such that 
\begin{align}
\label{Grad-Lip-conc}
\limsup_{N \to \infty} \sup_{|\qo| \le \qs}  \frac{1}{N} \log \P\big( \|{\rm Hess}(H^{\qo}_{\BJ})\|_\infty \ge \kappa \, \big) & < 0\,,  \qquad 
\sup_N \sup_{\V' \in \vB_\delta(\V)} \{ \|{\rm Hess} (\bar H_{\V'}) \|_\infty \} \le \kappa \,.
\end{align}
This applies also for ${\rm Hess}(H^{\qo}_{\BJ}-\widehat{H}^{{\qo},[m]}_{\BJ})$ and ${\rm Hess}(\bar H_{\V'}-\widehat H^{[m]}_{\V'})$,  
with $\kappa_m(\delta,\nu,\qs,\V) \to 0$ as $m \to \infty$.\\
(b).  For $\vB'_\epsilon(\V)$ of Definition \ref{def:ext-cond} 
\begin{align}\label{eq:Grad-CE-cont}
\sup \{ \| {\rm Hess}(\bar H_{\Ve'} - \bar H_{\V})\|_\infty : 
(\Ve',\qop) \in \vB'_\epsilon(\V),  N \ge 1 \} \to  0 \,\quad \hbox{as} \quad \epsilon \to 0\,.
\end{align}
(c).  \red{The centered fields $\HJs$ and $H^{\qo}_{\BJ}$,  whose covariances are 
modified by conditioning on  $\cptq$ of \eqref{eq:cond-star},  and respectively,  
on $\cptq(0,0,0,\qo)$,  can be coupled}
such that 
\begin{align}
\label{Grad-Lip-o}
\limsup_{N \to \infty} \sup_{\stackrel{|\qo| \le 1-\delta}{|\qo|\le \qs}} 
\frac{1}{N} \log \P\big( \|{\rm Hess}(H^{\qo}_{\BJ}-\HJs)\|_\infty \ge 2\delta\big) & < 0\,,    \qquad \forall \delta>0\,.
\end{align}
\end{lem}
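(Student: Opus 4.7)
My approach centers on the Gaussian splitting $H_{\BJ} = H^{\bf 0}_{\BJ} + \bar H_{\Ha}$, in which $H^{\bf 0}_{\BJ}$ is the $L^2$-orthogonal projection of $H_{\BJ}$ away from $\sigma(\Ha)$ and is independent of $\Ha$ with exactly the conditional law given $\cptq(\vec{\bf 0})$. By \eqref{eq:nudef}, the kernel $\mathbf{K}(\bx):=\Cov(H_{\BJ}(\bx),\Ha)$ has entries that are explicit analytic functions of the overlaps $\langle \bx,\bxo\rangle/N$ and $\langle \bx,\bxs\rangle/N$, built from $\nu$ and its first two derivatives evaluated inside the radius of convergence, with Hessian (in $\bx$) of operator norm uniformly bounded in $N$ thanks to \eqref{eq:r-star}. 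Combined with the explicit formula $\bar H_{\V'}(\bx) = \langle \mathbf{K}(\bx), \Sigma_\Ha^{-1}\V'\rangle$ and Lemma~\ref{lem:app}(a) (uniform invertibility of $\Sigma_\nu(\qop)^{-1}$ on $\vB_\delta(\V)$ for small $\delta$), this yields the deterministic half of \eqref{Grad-Lip-conc}.

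For the random half of part (a), I will apply the Borell--TIS inequality to the real-valued Gaussian process $(\bx,\bu,\bv)\mapsto \langle \bu, {\rm Hess}(H^{\bf 0}_{\BJ})(\bx)\bv\rangle$ on the compact set $\B^N_\star \times \SN \times \SN$, whose supremum equals $\|{\rm Hess}(H^{\bf 0}_{\BJ})\|_\infty$. Conditioning reduces variance, so the pointwise variance is $O(1/N)$ as for the unconditioned field (controlled by $\nu^{(4)}(r_\star^2) < \infty$ via \eqref{eq:r-star}), while $\E[\|{\rm Hess}(H^{\bf 0}_{\BJ})\|_\infty] = O(1)$ follows either from Slepian comparison with a GOE-type reference or from a Dudley integral over an $\epsilon$-net of log-cardinality $O(N)$. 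Borell--TIS then produces the $e^{-cN}$ tail of \eqref{Grad-Lip-conc}, uniformly in $|\qo|\le\qs$ since all constants depend continuously on $\qo$. The truncation statement is proved the same way applied to $H^{\Delta}_{\BJ}$ of \eqref{potential-m} and to $\bar H_{\V'}-\hat H^{[m]}_{\V'}$: the relevant variance proxy is now $O\bigl(\sum_{p>m} p^2 b_p^2 r_\star^{2(p-2)}\bigr)$, which tends to zero as $m\to\infty$ by \eqref{eq:r-star}, yielding $\kappa_m \to 0$.

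For part (b), the map $(\Ve',\qop)\mapsto \bar H_{\Ve'}$ is affine in the regression coefficients $\Sigma_\nu(\qop)^{-1}\hat V'$ and linear in the tangential block $\bu'$, all depending smoothly on parameters in $\vB'_\epsilon(\V)$; differentiating and reusing the Hessian bounds on $\mathbf{K}(\bx)$ from (a) yields $\|{\rm Hess}(\bar H_{\Ve'}-\bar H_{\V})\|_\infty \le C\|\Ve'-\V\|$ uniformly in $N$. For part (c), I couple $H^{\bf 0}_{\BJ}$ and $H^o_{\BJ}$ as the two $L^2$-orthogonal projections of the same $H_{\BJ}$, away from $\sigma(\Ha)$ and from $\sigma(\nabla_{\rm sp} H_{\BJ}(\bxs))$ respectively. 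Their difference is the regression of $H_{\BJ}$ onto the three extra scalar constraints in $\cptq(\vec{\bf 0})$ not already contained in $\cptq$, namely $H_{\BJ}(\bxo)$, $H_{\BJ}(\bxs)$ and the radial derivative at $\bxs$; it takes the form $\langle\tilde{\mathbf{K}}(\bx),\tilde\Ha\rangle$ for a three-dimensional centered Gaussian $\tilde\Ha$ whose covariance is a sub-block of $\Sigma_\nu(\qo)$, uniformly invertible for $|\qo|\le 1-\delta$ and $|\qo|\le\qs$. Applying Borell--TIS to ${\rm Hess}$ of this three-term difference (whose variance proxy is proportional to $(\qs^2-\qo^2)^{-1}\vee(1-\qo^2)^{-1}$ and so uniformly bounded under the hypotheses) then yields \eqref{Grad-Lip-o}.

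The hardest parts will be (i) tracking the degeneracy of $\Sigma_\nu(\qo)$ near the boundaries $|\qo|=\qs$ and $|\qo|=1$, handled in (a)--(b) by restricting to $\vB_\delta(\V)$ around interior $\V$ and in (c) by the hypothesis $|\qo|\le 1-\delta$; and (ii) establishing $\E[\|{\rm Hess}(H^{\bf 0}_{\BJ})\|_\infty] = O(1)$ at the correct scale, which requires care with the canonical Gaussian metric arising from analyticity of $\nu$ on $(-\bar r^2,\bar r^2)$ and a volumetric net argument matching the $e^{O(N)}$ entropy of $\SN\times\SN$.
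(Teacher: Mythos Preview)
Your plan is sound and would work, but it diverges from the paper's proof in two places where the paper finds cleaner shortcuts.

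For the random half of part (a), the paper does not redo the Dudley/expectation bound for the conditioned field. Instead it invokes the already-known tail $\P(\|{\rm Hess}(H_{\BJ})\|_\infty \ge \kappa)\le e^{-cN}$ for the \emph{unconditioned} field (citing \cite{BSZ}) and then transfers it to $H^{\bf 0}_{\BJ}$ via Anderson's inequality: since $H_{\BJ}=H^{\bf 0}_{\BJ}+\bar H_{\hat\Ha}+\bar H_{\Ha_\perp}$ with independent summands and the event $\{\max_i\|{\rm Hess}(\cdot)(\bx_i)\|_2<\kappa\}$ is symmetric convex, the probability can only increase when the mean-zero summands are removed. This sidesteps exactly the step you flag as hardest (controlling $\E\|{\rm Hess}(H^{\bf 0}_{\BJ})\|_\infty$), and the same one-line transfer handles the truncation ${\rm Hess}(H^{\bf 0}_{\BJ}-\hat H^{{\bf 0},[m]}_{\BJ})$ using the tail for $H^\Delta_{\BJ}$.

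For part (c), the paper also avoids Borell--TIS. Coupling through $H_{\BJ}^\perp$ (conditioned on $\Ha_\perp=\mathbf 0$) gives $H^{\bf 0}_{\BJ}-H^o_{\BJ}=\bar H_{\hat\Ha_4}-\bar H_{\hat\Ha}$, a field whose randomness lives entirely in the four scalars $\hat\Ha\in\R^4$. The explicit Hessian formula \eqref{def:Hess-Hbar} (with $\bu=\mathbf 0$) shows this Hessian has rank at most two, yielding a \emph{deterministic} bound $\|{\rm Hess}(\bar H_{\hat\Ha})\|_\infty\le c(\delta,\nu)\|\hat\Ha\|$. Since $\sqrt{N}\hat\Ha$ has bounded covariance $\Sigma_\nu(\qo)$, a plain four-dimensional Gaussian tail finishes. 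Your Borell--TIS route would also work, but note the relevant uniformity is $\|\Sigma_\nu^{-1}(\qo)\|_2<\infty$ on $|\qo|\le 1-\delta$ (Lemma~\ref{lem:app}(a)), not a factor of $(\qs^2-\qo^2)^{-1}$.

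Two minor points to tighten: in part (b), the tangential component $\bu'$ satisfies $\|\bu'\|<\epsilon\sqrt N$, so writing the bound as $C\|\Ve'-\V\|$ is ambiguous; the correct scaling is that the $\bu'$-contribution to the Hessian has operator norm $O(\|\bu'\|/\sqrt N)=O(\epsilon)$, as \eqref{def:Hess-Hbar} makes explicit. And in (c), your description of the difference as regression onto ``three extra constraints'' is correct in spirit, but the cleanest bookkeeping is through the intermediate field $H^\perp_{\BJ}$, since $\hat\Ha_1$ is correlated with $\hat\Ha_4$ and working directly with conditional covariances is messier than just bounding $\bar H_{\hat\Ha}$ and $\bar H_{\hat\Ha_4}$ separately.
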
 
\begin{remark}\label{rem:trivial-norm-bd}  The Hessian is additive and its supremum-norm is convex.  Considering 
\eqref{Grad-Lip-conc} for the decomposition of $H_{\BJ}-H^{[m]}_{\BJ}$ given $\cptq(\V')$,  
shows that then $\|{\rm Hess}(H_{\BJ}-H^{[m]}_{\BJ})\|_\infty$ exceeds $\delta$  
with $e^{-\Omega_N(1)}$-probability,  uniformly over $\V' \in \vB_{\epsilon}(\V)$, 
for any $m \ge m_\star(\delta,\V)$ and $\epsilon>0$ small enough.  Likewise,  upon considering also 
\eqref{Grad-Lip-conc} for $H_{\BJ}$ given $\cptq(\V')$,  we see that 
$\|{\rm Hess}(H^{[m]}_{\BJ})\|_\infty$ exceeds $3\kappa$ with $e^{-\Omega_N(1)}$-probability,
uniformly over $\V' \in \vB_{\epsilon}(\V)$,   for any $m \ge m_\star(\kappa,\V)$ and small $\epsilon>0$.  
By \eqref{eq:L-op},  it thus follows that
for any $\delta>0$ and $m \ge m_\star(\delta,\nu,\qs,\V)$,
\begin{align}
\label{Grad-lip-apx}
\lim_{\epsilon \to 0} \limsup_{N \to \infty} 
\sup_{\V' \in \vB_\epsilon(\V)}
\frac{1}{N}
 \log \P(\|\nabla (H_{\BJ}-H_{\BJ}^{[m]}) \|_{\rm L} \ge \delta 
\,|\,  \cptq (\V')
) & < 0\,,
\end{align}
whereas for any $m \ge m_\star(\kappa,\nu,\qs,\V)$, 
\begin{align}\label{Grad-sup-conc}
\lim_{\epsilon \to 0} \limsup_{N \to \infty} \sup_{\V' \in \vB_\epsilon(\V)}
\frac{1}{N}
 \log \P( \|\nabla H^{[m]}_{\BJ}\|_{\rm L} \ge 3 \kappa  \,|\,  \cptq (\V')
) & < 0 \,.
\end{align}
\end{remark}

Combining Proposition \ref{prop:existence} and \eqref{Grad-sup-conc} we next deduce the a.s.  existence of a
unique strong solution of \eqref{diffusion} which is $\mu^N_{2\b,\BJ}$-reversible and up to $e^{-\Omega(N)}$-small probability
is confined away from the boundary of $\B^N_\star$.
\begin{cor}\label{cor:ex} 
Assuming \eqref{eq:nudef}-\eqref{eq:r-star},  for any $N,\b<\infty$,  $r_\star < \bar r$,  $\bxo \in \B^N(\sqrt{N})$ and 
almost every $\BJ$,  the dynamics \eqref{diffusion} admits a unique strong solution 
$\bx: \R_+ \mapsto \B^N_\star$ which is reversible \abbr{wrt} 
$\mu^N_{2\b,\BJ}$
of \eqref{Gibbs}-\eqref{def:ZbeJ}. \\
(a).  Fixing $\qs \in [0, 1]$ and an allowed $\V$,  there also exists $r_1 \in (1,r_\star)$ such that for all large enough $m \le \infty$,
\begin{equation}\label{eq:exp-conf}
\lim_{\epsilon \to 0} \limsup_{N \to \infty} \sup_{\V' \in \vB_\epsilon(\V)}
\frac{1}{N} \log \P\big( \tau^{[m]}_{r_1} \le T  \,|\,  \cptq (\V') \big) < 0 \,,   \qquad \forall T < \infty,
\end{equation}
where $\tau^{[m]}_r := \inf\{ t \ge 0 : N^{-1/2} \| \bx^{[m]}_t \| \ge r \}$ for the a.s.  strong solution
$\bx^{[m]}_t$ of \eqref{diffusion} under the mixture $\nu^{[m]}(\cdot)$ and $\P$ denotes the 
product measure over $\BJ$ and $\{\BB_t,  t \ge 0\}$. 
In case $\bxo \in \SN$,  one further has \eqref{eq:exp-conf} holding for some $c<\infty$,  all large enough $m,\ell$
and $\tau_{c}^{[m]} := \inf\{ t \ge 0 :  N^{-1/2} \| \bx^{[m]}_t \| \notin (1-\frac{c}{\ell},1+\frac{c}{\ell} ) \}$.\\
\red{(b).  Denote by $\HJsm$ and $\bar{H}^{[m]}_{\V}$,  respectively,  the fields $\HJs$ and $\bar{H}_{\V}$ 
for the model $\nu^{[m]}$,  and by $\tau^{\star,[m]}_r$ the analog of $\tau^{[m]}_r$ for the solution of 
\eqref{diffusion-phi} with $\varphi_N=\HJsm+\bar{H}^{[m]}_{\V}$.  Then,  for some $r_1 \in (1,r_\star)$
and all large $m$,
\begin{equation}\label{eq:exp-conf-Ho}
\limsup_{N \to \infty} 
\frac{1}{N} \log \P\big( \tau^{\star,[m]}_{r_1} \le T ) < 0 \,,  \qquad \forall T < \infty \,.
\end{equation}}
\end{cor}
\begin{proof}
Recall that under \eqref{eq:r-star},  the Gaussian fields $H_\BJ$ of \eqref{potential} are in $C_0^2(\B^N)$,
so the a.s.  existence of a strong solution of \eqref{diffusion} is guaranteed by Proposition \ref{prop:existence}(a). 
Having locally Lipschitz $f'(\cdot)$ as in \eqref{eq:fdef} implies that $f(\cdot)$ is uniformly 
bounded below on $[0,r_\star^2]$,  so with $\|H_{\BJ}\|_\infty$ a.s.  finite,  the same applies for
$Z_{2 \b,\BJ}$ of \eqref{def:ZbeJ}.  The random Gibbs measure $\mu^N_{2\b,\BJ}$ of 
\eqref{Gibbs} is thus a.s.  well defined and the reversibility of the solution of \eqref{diffusion} 
follows by the general theory of Langevin diffusions.  \red{In addition,  \abbr{wlog} 
$\nu^{[m]}$ is not pure $p$-spins once $m \ge m_\star$.\\
(a).}  Turning to \eqref{eq:exp-conf},  fixing $\qs$ and 
an allowed $\V$ we deduce from \eqref{eq:L-op} and \eqref{Grad-sup-conc} 
that for any $m \in [m_\star,\infty]$,  there exist $\epsilon_o,\eta>0$,  $C<\infty$ such that 
\begin{equation}\label{eq:bd-kappa-N}
\sup_{\V' \in \vB_{\epsilon_o}(\V)} \P\big( \|\nabla H^{[m]}_{\BJ}\|_\infty \ge 3 \kappa r_\star \sqrt{N} \,|\,  \cptq(\V') \big) \le C e^{-N \eta} \,.
\end{equation}
From the proof of Proposition \ref{prop:existence}(b)-(c) we also see that
$\|\nabla H^{[m]}_{\BJ}\|_\infty \le 3 \kappa r_\star \sqrt{N}$
results with \eqref{eq:exp-conf} holding for some $r_1$,  respectively $c$,  up to an $e^{-\Omega(N)}$-small probability, 
with neither $r_1$ nor $c$ depending on $m$ (or $T$).\\
\red{(b).  Denoting by $H_{\BJ}^{\qo,[m]}$ the centered field 
whose covariance been modified by conditioning on $\cpteqm (\vec{\bf 0})$,  
we have analogously to \eqref{eq:H0-barHe-m},  that conditionally on the event $\cpteqm (\Ve)$, }
\begin{equation}\label{eq:H0-barH-m}
H^{[m]}_{\BJ} = H_{\BJ}^{\qo,[m]}+\bar H^{[m]}_{\Ve},  \qquad 
\bar H^{[m]}_{\Ve} (\bx) := \E[H^{[m]}_{\BJ}(\bx) | \cpteqm (\Ve)] \,.
\end{equation}
\red{In particular,  by part (a) for $\nu=\nu^{[m]}$ and $\V'=\V$,  we arrive at \eqref{eq:exp-conf-Ho} 
for the solution of \eqref{diffusion-phi} with $\varphi_N=H_\BJ^{{\qo},[m]}+\bar{H}^{[m]}_{\V}$ 
and any $m \ge m_\star$ large enough.  Coupling $\HJsm$ with $H_\BJ^{{\qo},[m]}$ as in 
Lemma \ref{lem:G-conc}(c),  it further follows from 
\eqref{eq:L-op} and \eqref{Grad-Lip-o} that 
$\|\nabla (H_\BJ^{{\qo},[m]} - \HJsm) \|_\infty \le 2 \delta r_\star \sqrt{N}$ up to an $e^{-\Omega(N)}$-small 
probability.  As in our proof of part (a),  under the latter event the bound \eqref{eq:exp-conf-Ho} extends 
to the solution of \eqref{diffusion-phi} with $\varphi_N=\HJsm+\bar{H}^{[m]}_{\V}$,  at slightly larger $r_1$ 
(and for all $m$ large enough).
}
\end{proof}

Hereafter we take $1<r_\star<\bar{r}$ and $f=f_\ell$ as in \eqref{eq:fdef} 
with $f_0(\cdot)$ of locally Lipschitz derivative on $[0,r_\star^2)$,
coupling $(H_{\BJ},H^{[m]}_{\BJ})$ as in Definition \ref{def:H-m-coupling} 
for $\nu(\cdot)$ satisfying  \eqref{eq:r-star}-\eqref{eq:nudef}.
Fixing $\qs \in [0,1]$ and an allowed vector $\V$,  
\red{
we proceed to show that Theorem \ref{thm-macro} is a consequence of Proposition \ref{prop-macro-easy},  via the following four steps:\\
{\bf I.} Replacing $H_{\BJ}$ in \eqref{eq:exp-err-LT} by $H_{\BJ}^{[m]}$. \\
{\bf II.} Instead of conditioning there on $\cptq (\V')=\cptq^{[\infty]}(\V')$
condition on $\cpteqm (\Ve')$ of Definition \ref{def:ext-cond}.\\
{\bf III.} Replacing such conditional statement by requiring \eqref{eq:exp-err-LT} for the
un-conditional field $\HJsm+\bar H^{[m]}_{\V}$.\\
{\bf IV.} Replacing $f_0'(\cdot)$ as in \eqref{eq:fdef} with
$f_0'(\cdot)$ locally Lipschitz that satisfies \cite[(1.6) \& (1.10)]{DS21},  with 
the limiting dynamic}  $\bU_\star:=\bUqf(\V;\nu)$ 
of the model $\nu$ per Definitions \ref{def:limit-dyn-low-temp} and \ref{def:limit-dyn-high-temp},
\red{replaced by $\bUqf(\V;\nu^{[m]})$.}

\noindent
{\bf I.} Conditioning on the events $\cptq (\V')$ of \eqref{eq:cond-J} for $\V' \in \vB_\epsilon(\V)$ of Definition \ref{def:vB-eps},  consider for $N,T,\b<\infty$ and the corresponding strong solutions of \eqref{diffusion},  the variable
\[
\Delta \widehat{\err}_{N,T}^{[m]} (\BJ) :=\sup_{\bU_\infty}
|\widehat\err_{N,T}(\bxo,\bxs,H_{\BJ};\bU_\infty) -  \widehat\err_{N,T}(\bxo,\bxs,H^{[m]}_{\BJ};\bU_\infty)|
\]
\red{for $\bxo$ and $\bxs$ as in Definition \ref{def:ext-cond}.  Applying}
Proposition \ref{prop:cont} for $\varphi_N^{(1)}=H_{\BJ}$ 
and $\varphi_N^{(2)}=H^{[m]}_{\BJ}$ we deduce from \eqref{Grad-lip-apx}-\eqref{Grad-sup-conc} 
(using again \eqref{eq:L-op}),  by setting $\delta=\delta(\zeta)>0$ small enough so  
$\zeta \ge \gamma_{T,\b}(3\kappa,\delta r_\star)$ and the corresponding value of 
$m_\star$ in \eqref{Grad-lip-apx},  that for any $m \ge m_o(\zeta,f,\nu,\qs,\V)$,
\begin{equation}\label{eq:m-apx-err}
\lim_{\epsilon \to 0} \limsup_{N \to \infty} \sup_{(\bxo,\V') \in \vB_\epsilon(\bxs,\V)} \frac{1}{N} \log 
\P\Big(\Delta \widehat\err_{N,T}^{[m]} (\BJ) \ge \zeta \,\big|\,  \cptq (\V') \Big)  < 0 \,.
\end{equation}
It thus suffices for \eqref{eq:exp-err-LT} to show that for any $\delta>0$ 
and $m \ge m_o(\delta,\V)$ large enough 
\begin{align}\label{eq:m-pre-apx}
\lim_{\epsilon \to 0} 
\limsup_{N \to \infty}  \sup_{(\bxo,\V') \in \vB_\epsilon(\bxs,\V)}
\frac{1}{N} \log \P \Big( \widehat\err_{N,T}\big(H^{[m]}_{\BJ};\bU_\star \big) > 5 \delta  \,  \big| \,
\cptq^{[\infty]}(\V') \Big) &<  0 \,,
\end{align}
\purple{where setting hereafter} $(\bxo,\bxs)$ as dictated by $(\qs,\qop)$,  \purple{we omit those arguments 
in $\widehat\err_{N,T}(\cdot;\cdot)$.  Further,} thanks to \eqref{eq:exp-conf} we
restrict the solution of \eqref{diffusion} for conditionally on $\cptq^{[\infty]}(\V')$ finite 
mixtures $H^{[m]}_{\BJ}$,  to $\B^N(r_1 \sqrt{N})$,  with $r_1<r_\star$ that does not vary in $m$
and with $f'$ globally Lipschitz on $[0,r_1^2]$.  

\noindent
\red{{\bf II.} Since} $|\qo| < 1$ implies that 
$1-|\qop| \ge C^{-1} >0$ for some $\epsilon_o>0$ and all $\V' \in \vB_{\epsilon_o}(\V)$,  
our next lemma allows for conversion of conditioning on 
$\cptq^{[\infty]}(\V')$ in \eqref{eq:m-pre-apx},  to a controlled enlargement of the parameters 
in $\cptq^{[m]}(\cdot)$.  Specifically,  in view of Lemma \ref{lem:Cinf-to-Cm} it
suffices for \eqref{eq:m-pre-apx} to show that for all $m \ge m_o(\delta,\V)$, 
\begin{align}\label{eq:m-apx}
\lim_{\epsilon \to 0} 
\limsup_{N \to \infty}  \sup_{(\Ve',\qop) \in \vB'_\epsilon(\V)}
\frac{1}{N} \log \P \Big( \widehat\err_{N,T}\big(H^{[m]}_{\BJ};\bU_\star \big) > 5 \delta  \,  \big| \,
\cpteqm (\Ve') \Big) &<  0 \,.
\end{align}
\begin{lem}\label{lem:Cinf-to-Cm}
For any $t,C< \infty$,  $\epsilon>0$,  $\qs \in [0,1]$ there exist finite $m_0$,  $N_0$, 
so for any $\V=(\Ep,\Eg,\Gg,\qo)$ and $\Ve=(\Ep,\Eg,\Gg,0,{\bf 0})$ with 
$1-|\qo| \ge C^{-1} \vee (1-\qs)$,  $\|\Ve\| \le C$ 
and all $m\geq m_0$,  $N\ge N_0$,
	\begin{equation}\label{eq:Cinf-to-Cm}
	\P\Big(H_{\BJ}^{[m]}\in \cdot \,|\, \cptq^{[\infty]}(\V) \Big) 
	\leq e^{-N t} +
	\sup_{\Ve' \in \vB'_\epsilon(\Ve)} \P\Big(H_{\BJ}^{[m]}\in \cdot \,|\, \cpteqm (\Ve') \Big) \,.
	\end{equation}	
\end{lem}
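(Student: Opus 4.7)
The plan is to leverage the independent decomposition $H_\BJ = H_\BJ^{[m]} + H_\BJ^\Delta$ from Definition \ref{def:H-m-coupling}, which induces the additive splitting $\Ha = \Ha^{[m]} + \Ha^\Delta$ of the $(N+2)$-dimensional conditioning vector of Definition \ref{def:ext-cond}. First I would observe that $\cptq^{[\infty]}(\V) = \cpteq^{[\infty]}(\Ve)$ for $\Ve = (\Ep,\Eg,\Gg,0,{\bf 0})$, since the constraint $\nabla H_\BJ(\bxs) = -\Gg \bxs$ in \eqref{eq:cond-J} is equivalent to prescribing $\partial_{\hbxs} H_\BJ(\bxs) = -\Gg\|\bxs\|$, $\partial_\bz H_\BJ(\bxs) = 0$, and $M \nabla H_\BJ(\bxs) = {\bf 0}$ (using $\bz \perp \bxs$ and the rows of $M$ lying in $\bxs^\perp$). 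Because $H_\BJ^{[m]}$ is independent of $H_\BJ^\Delta$, conditional on $\Ha^{[m]} = \Ve'$ the law of $H_\BJ^{[m]}$ is exactly its law under $\cpteq^{[m]}(\Ve')$, so the tower property delivers
\begin{equation*}
\P\bigl(H_\BJ^{[m]} \in A \,\big|\, \cpteq^{[\infty]}(\Ve)\bigr) = \int \P\bigl(H_\BJ^{[m]} \in A \,\big|\, \cpteq^{[m]}(\Ve - \xi)\bigr)\,\mu_m^{\Ve}(d\xi),
\end{equation*}
where $\mu_m^{\Ve}$ is the conditional law of $\Ha^\Delta$ given $\Ha^{[\infty]} = \Ve$. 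Splitting this integral according to whether $\xi \in \vB'_\epsilon({\bf 0})$, and bounding the integrand by $1$ on the complement, reduces \eqref{eq:Cinf-to-Cm} to the concentration estimate $\mu_m^{\Ve}\bigl(\vB'_\epsilon({\bf 0})^c\bigr) \le e^{-Nt}$.

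For that concentration I would exploit the product structure of the covariance of $\Ha^{[\infty]}$. A direct calculation from \eqref{eq:nudef}, using that $\sp\{\bxo,\bxs\} \subseteq \sp\{\bxs,\bz\}$ is orthogonal to the rows of $M$, shows that $\hat{\Ha}^{[\infty]}$ and $\Ha_\perp^{[\infty]}$ are independent Gaussian vectors (the same then holds for their $[m]$- and $\Delta$-subparts), so $\mu_m^{\Ve}$ factors and the two tail events can be treated separately. For the perpendicular block, $\Ha_\perp^{[\infty]} = \Ha_\perp^{[m]} + \Ha_\perp^\Delta$ consists of centered Gaussians with isotropic covariances $\nu'(\qs^2) I_{N-2}$, $(\nu^{[m]})'(\qs^2) I_{N-2}$, $(\nu^\Delta)'(\qs^2) I_{N-2}$, hence $\Ha_\perp^\Delta \mid \Ha_\perp^{[\infty]} = {\bf 0}$ is centered Gaussian with covariance $\alpha_m^2 I_{N-2}$ for $\alpha_m^2 := (\nu^\Delta)'(\qs^2) (\nu^{[m]})'(\qs^2)/\nu'(\qs^2) \le (\nu^\Delta)'(\qs^2) \to 0$, and standard $\chi^2$-concentration then yields $\P(\|\Ha_\perp^\Delta\| \ge \epsilon\sqrt{N} \mid \Ha_\perp^{[\infty]} = {\bf 0}) \le \exp(-cN\epsilon^2/\alpha_m^2)$ as soon as $\alpha_m^2$ is small. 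For the four-dimensional $\hat{\Ha}^\Delta$, each entry has unconditional variance of order $\sigma_m^2/N$ with $\sigma_m^2 := \max_{k \in \{0,1,2\}} (\nu^\Delta)^{(k)}(\qs^2) \to 0$, while a brief computation identifies $N\,\hat{\Sigma}^{[\infty]} = \Sigma_\nu(\qo)$ of \eqref{def:w-s}, which by Lemma \ref{lem:app}(a) has smallest eigenvalue bounded away from $0$ uniformly on $\{|\qo| \le \qs,\,|\qo| \le 1 - C^{-1}\}$ for any non-pure mixture (while for pure finite $\nu$, $H_\BJ^\Delta \equiv 0$ once $m$ exceeds the degree and \eqref{eq:Cinf-to-Cm} is trivial). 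Gaussian conditioning then bounds the conditional mean of $\hat{\Ha}^\Delta$ by $O(\sigma_m^2 C)$ and its conditional covariance in operator norm by $O(\sigma_m^2/N)$, yielding $\P(\|\hat{\Ha}^\Delta\|_\infty \ge \epsilon \mid \hat{\Ha}^{[\infty]} = \hat V) \le 8 \exp(-cN\epsilon^2/\sigma_m^2)$ once $\sigma_m^2 C \le \epsilon/2$.

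Choosing $m_0$ so that $\alpha_{m_0}^2 \vee \sigma_{m_0}^2$ falls below these thresholds (all depending only on $(t,C,\epsilon,\qs,\nu)$) makes both right-hand sides at most $\tfrac12 e^{-Nt}$ for all $m \ge m_0$ and $N \ge N_0$, and the union bound then closes the argument. The main technical point that requires care is the uniform lower bound on the smallest eigenvalue of $\Sigma_\nu(\qo)$ over the compact parameter set, since it controls the norm of $(\hat{\Sigma}^{[\infty]})^{-1}$ and hence the size of the conditional mean of $\hat{\Ha}^\Delta$; aside from dispatching the pure $p$-spin degeneracy via the triviality noted above, everything else is routine multivariate Gaussian concentration.
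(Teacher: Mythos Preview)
Your proposal is correct and follows essentially the same route as the paper: both exploit the independence of $H_\BJ^{[m]}$ and $H_\BJ^\Delta$ to disintegrate the conditional law over the value of $\Ha^{[m]}$ (equivalently $\Ha^\Delta$), bound the integrand by $1$ outside $\vB'_\epsilon$, and then establish the remaining Gaussian concentration by separating the independent blocks $\hat{\Ha}$ and $\Ha_\perp$, controlling the conditional mean of $\hat{\Ha}^\Delta$ via $\Sigma_{\nu^\Delta}\Sigma_\nu^{-1}$ and the uniform invertibility of $\Sigma_\nu(\qo)$ from Lemma~\ref{lem:app}(a), and bounding all conditional variances by the diagonal of $\Sigma_{\nu^\Delta}$. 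The only cosmetic differences are that the paper integrates over $\Ha^{[m]}=\Ve'$ rather than $\Ha^\Delta=\xi$, and disposes of the pure case by assuming \abbr{wlog} that $\nu$ is an infinite mixture rather than noting $H_\BJ^\Delta\equiv 0$ for large $m$.
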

\begin{proof} \abbr{Wlog} the mixture $\nu(\cdot)$ is not finite,  thereby excluding pure $p$-spins.
The random process $H_{\BJ}^{\Delta}:=H^{[\infty]}_{\BJ}-H_{\BJ}^{[m]}$, 
which corresponds to the mixture $\nu^\Delta := \nu-\nu^{[m]}$,  is independent of $H_{\BJ}^{[m]}$.  We thus 
have the following representation of the \abbr{lhs} of \eqref{eq:Cinf-to-Cm}, 
	\begin{align*}
		\P\Big(H_{\BJ}^{[m]}\in \cdot\,|\, \Ha^{[\infty]} =  \Ve \Big)= 
		\int \P\Big(H_{\BJ}^{[m]}\in \cdot\,|\, \Ha^{[m]} =  \Ve' \Big)\purple{p}(\Ve')d\Ve'\,,
	\end{align*}
	where $p(\Ve')$ denotes the conditional density of $\Ha^{[m]}$ 
	given that $\Ha^{[\infty]}=\Ve$.  With the conditional probability on the \abbr{RHS} bounded by
	one,  take $\Ha^{\Delta}:=\Ha^{[\infty]}-\Ha^{[m]}$ and 
	denote by $\hat{\Ha}^{[\infty]}$ and $\hat{\Ha}^{\Delta}$
the first four elements of $\Ha^{[\infty]}$ and $\Ha^{\Delta}$,  respectively,
	it suffices to show that 
	\begin{equation}\label{eq:HDelta}
		\P\Big(\Ha^{[m]} \notin \vB'_\epsilon(\Ve) \,\Big|\, \Ha^{[\infty]}=\Ve \Big) =
	\P\Big(
			\{ \|\hat{\Ha}^{\Delta}\|_\infty \ge \epsilon\}  \bigcup 
			\big\{ \|\Ha_\perp\| \ge \epsilon \sqrt{N} \big\}
			 \,\Big|\, \Ha^{[\infty]}=\Ve \Big)
			\leq e^{-Nt} \,.
	\end{equation}	
Considering first $\qs>0$,  since $(\hat{\Ha}^\Delta,\hat{\Ha}^{[\infty]})$ and $(\Ha^\Delta_\perp,\Ha^{[\infty]}_\perp)$ are independent,  the conditional law of 
$\Ha^{\Delta}$ given $\{\Ha^{[\infty]}=\Ve\}$ is the product of the Gaussian laws of 
$\hat{\Ha}^{\Delta}$ given $\{\hat{\Ha}^{[\infty]}=(\Ep,\Eg,\Gg,0)\}$ and the 
Gaussian law of $\Ha^{\Delta}_\perp$ given $\{\Ha^{[\infty]}_\perp={\bf 0}\}$. 
Now,  by the well-known formula for conditional expectation of Gaussian variables 
\begin{align}\label{eq:M-ce}
\E \big[ \Ha^{\Delta}_\perp \,|\,  \Ha^{[\infty]}_\perp={\bf 0}] &= {\bf 0} \,, \\
\E \Big[ \hat{\Ha}^{\Delta}\,|\,  \hat{\Ha}^{[\infty]}=(\Ep,\Eg,\Gg,0) \,  \Big] &=
\Sigma_{\nu^\Delta} \Sigma_\nu^{-1} (\Ep,\Eg,\Gg,0)^{\top},
\label{eq:hat-ce}
\end{align}
for $\Sigma_\nu=\Sigma_{\nu}(\qo)$ of \eqref{def:w-s} and with $\Sigma_{\nu^\Delta}$ 
similarly defined for the mixture $\nu^\Delta$.
Avoiding pure $p$-spins and having $\qs>0$,  $|\qo|<1$ yields a positive definite
$\Sigma_\nu (\qo)$ (see Lemma \ref{lem:app}(a)),  with the conditional expectation in \eqref{eq:hat-ce} 
independent of $N$ and $\bxo$ given the value of $\qo$.  
Fixing $\V$,  this conditional expectation goes to zero as $m\to\infty$ 
since $\Sigma_{\nu^\Delta} \to 0$ entrywise.  This convergence is uniform 
under our assumed compact range of $(\Ep,\Eg,\Gg)$,  where keeping $1-|\qo|$ away from zero 
gives a uniform control on $\|\Sigma_\nu^{-1}(\qo)\|_2$.
The variance of elements of $\sqrt{N} \hat{\Ha}^{\Delta}_{\BJ}$ 
are merely the diagonal elements of $\Sigma_{\nu^\Delta}$,  which go to zero as $m \to \infty$,
and the conditional variances 
can only be smaller.  Finally,  
the (N-2)-dimensional Gaussian vectors $(\Ha^{\Delta}_\perp,\Ha^{[\infty]}_\perp)$ 
have jointly \abbr{iid} coordinates and the conditional variance of each coordinate of 
$\Ha^{\Delta}_\perp$ is at most its unconditional variance $(\nu^\Delta)'(\qs^2)$ which 
goes to zero as $m \to \infty$.
Combining this with the zero conditional mean of \eqref{eq:M-ce},
and our observations regarding the conditional mean and variance of $\hat{\Ha}^{\Delta}$,
results with the uniform bound of \eqref{eq:HDelta}.

For $\qs=0$ we have $\bxs={\bf 0}$ so we merely control the conditional Gaussian law 
of the first element of $\hat{\Ha}^{\Delta}$,  uniformly in $|\Ep| \le C$.  This is then 
a simple one-dimensional special case of the preceding argument.
\end{proof}

\noindent
\red{{\bf III.} 
Fixing $m$ and allowed $\V'=(\Ep,\Eg,\Gg,\qop)$,  recall
\eqref{eq:H0-barH-m} and} 
note that \eqref{Grad-Lip-conc} holds for the model $\nu^{[m]}$.  Consequently,  by the reasoning of Remark \ref{rem:trivial-norm-bd},  we get analogously to \eqref{Grad-sup-conc},  that for some finite
$\kappa=\kappa(\nu^{[m]},\V')$,
\begin{align}\label{Grad-m-conc}
\limsup_{N \to \infty} 
\frac{1}{N}
 \log \P( \|\nabla H^{[m]}_{\BJ}\|_{\rm L} \ge 3 \kappa  \,|\,  \cptq^{[m]} (\V')) & < 0 \,.
\end{align}
Conditioning in \eqref{eq:m-apx} on $\cptq^{[m]}(\V')$ \red{instead of 
$\cpteqm (\Ve')$},
merely changes the potential $\varphi_N^{(2)}:=H_{\BJ}^{\qop,[m]}+\bar{H}^{[m]}_{\Ve'}$ to 
$\varphi_N^{(1)}:=H_{\BJ}^{\qop,[m]}+\bar{H}^{[m]}_{\Ve}$ with $\Ve=(\Ep,\Eg,\Gg,0,{\bf 0})$.
For $m \ge m_\star$ finite,  the mixture $\nu^{[m]}$ is not pure $p$-spins.  In that case,
\eqref{eq:Grad-CE-cont} applies for $\nu^{[m]}$ and by \eqref{eq:L-op} we conclude that
$\sup_{\vB'_{\epsilon} (\Ve)} \{ \|\nabla (\varphi^{(2)}_N-\varphi^{(1)}_N)\|_\infty \} \le \delta \sqrt{N}$ for any 
$\delta>0$,  $N \ge 1$,  provided $\epsilon \le \epsilon_o(\delta,m,\V')$.  It thus follows by 
Proposition \ref{prop:cont} that up to the negligible event of \eqref{Grad-m-conc},  as $\epsilon \to 0$, 
\[
\sup_{N,\vB'_\epsilon(\Ve)} 
\big|\widehat\err_{N,T}\big(\varphi_N^{(2)};\bU_\star \big)-\widehat\err_{N,T}\big(\varphi_N^{(1)};\bU_\star \big) \big| \to 0 \,,
\]
thereby allowing us to replace \eqref{eq:m-apx} by the easier task of showing that for all $m \ge m_o(\delta,\V)$,
\begin{align}\label{eq:m-apx2}
\lim_{\epsilon \to 0} 
\limsup_{N \to \infty}  \sup_{\stackrel{|\qop-\qo| <\epsilon}{|\qop| \le \qs}}
\frac{1}{N} \log \P \Big( \widehat\err_{N,T}\big(H^{\qop,[m]}_{\BJ}+\bar H^{[m]}_{\V'};\bU_\star \big) > 4 \delta  \,  \Big) &<  0 
\end{align}
\red{(where we rely also on \eqref{eq:H0-barH-m}} \purple{and set $\frac{1}{N} \langle \bxo,\bxs \rangle=\qop$)}.
Taking $2 \delta < 1-|\qo|$,  we deduce by considering \eqref{Grad-Lip-o} for the mixture $\nu^{[m]}$,  that  
$\|{\rm Hess} (H^{\qop,[m]}_{\BJ} - \HJsm)\|_\infty \le \delta$ up to an $e^{-\Omega(N)}$-small 
probability,  uniformly over $|\qop-\qo| < \delta$.  In case $\qs>0$ we also recall \eqref{eq:Grad-CE-cont},
that for all $m$ large enough,
$ \| {\rm Hess}(\bar H^{[m]}_{\V'} - \bar H^{[m]}_{\V})\|_\infty \to 0$ as $\epsilon \to 0$,  uniformly in $N$ 
and $|\qop-\qo|<\epsilon$.  Utilizing once more Proposition \ref{prop:cont},  we thus deduce that up to the exponentially 
negligible probabilities in our applications of \eqref{Grad-Lip-o} and \eqref{Grad-m-conc},  the bound of 
\eqref{eq:m-apx2} is a consequence of having 
\begin{equation}\label{eq:to-H-o}
\limsup_{N \to \infty}
\frac{1}{N} \log \P \Big( \widehat\err_{N,T}\big(\HJsm+\bar H^{[m]}_{\V};\bU_\star \big) > 2 \delta  \,  \Big) <  0 
\end{equation}
\purple{(where by continuity and rotational invariance of 
$\bxs \mapsto \widehat\err_{N,T}(\bxo,\bxs,\varphi_N;\bU_\star)$ we re-adjust to 
$\frac{1}{N} \langle \bxo,\bxs \rangle=\qo$).}

\noindent
\red{{\bf IV.} Fix $r_1 \in (1,r_\star)$ and increase $m_o$ to have \eqref{eq:exp-conf-Ho} holding
for all $m \ge m_o$.  Excluding the events $\{ \tau^{\star,[m]}_{r_1} \le T \}$ of $e^{-\Omega(N)}$-small probability,
the solutions of \eqref{diffusion-phi} with $\varphi_N=\HJsm+\bar H^{[m]}_{\V}$ and $(\bxs,\bxo)$ 
of Definition \ref{def:ext-cond} (for the prescribed $\qo$),  are unaffected by modifying 
$f(\cdot)$ to $\widetilde{f}(\cdot)$ outside $[0,r_1^2]$ whose derivative is 
locally Lipschitz throughout $[0,\infty)$ with the growth rate of Proposition \ref{prop-macro-easy}.
In particular,  the uniformly bounded $\err_{N,T}(\HJsm+\bar H^{[m]}_{\V};\bU_\star)$ are unchanged by 
such a modification in \eqref{diffusion-phi}.  Further,  setting $K_N^{[m]}(s):=Z_N(s)^2$ for $Z_N$ as in \eqref{def:KN-uk} at $k=1$,   
\begin{equation}\label{eq:tau-star-KNm}
\{ \tau^{\star,[m]}_{r_1} >  T \} \subseteq \{ \| K_N^{[m]} \|_T < r_1^2 \}\,,
\end{equation}
so the \abbr{rhs} holds up to $e^{-\Omega(N)}$-small probability,  both for the original $f(\cdot)$ 
and the modified $\widetilde{f}(\cdot)$.   Now,
Proposition \ref{prop-macro-easy} establishes \eqref{eq:to-H-o} 
with $\bUqtf(\V;\nu^{[m]})$ instead of $\bUqf(\V;\nu)$ and with the \abbr{rhs} of 
\eqref{eq:tau-star-KNm} holding with high probability,  it further follows that 
$\bUqtf(\V;\nu^{[m]})$ depends only on the values of $\widetilde{f}(r)$ for $r \le r_1$.
Hence, $\bUqtf(\V;\nu^{[m]})=\bUqf(\V;\nu^{[m]})$.  Moreover,}
the \abbr{lhs} of \eqref{eq:m-cont-Uf} holds for such $r_1$ 
(confirming Remark \ref{rmk:Uf-bd}) allowing us to deduce 
from Proposition \ref{prop:nu-cont} that $\|\bUqf(\V;\nu^{[m]})-\bU_\star\|_T \to 0$ as
$m \to \infty$,  thereby obtaining \eqref{eq:to-H-o}.


\section{\label{sec:DS21} Proof of Proposition  \ref{prop-macro-easy}}

Hereafter we fix,  as was done in \cite{BDG2,DS21} a finite mixture \red{of maximum degree $m$ (as in 
\eqref{eq:nu-m-def}),} and a locally Lipschitz $f'(\cdot)$ which satisfies \cite[(1.6) \& (1.10)]{DS21}.
Further,  as explained in Remark \ref{rem:DS21},  
\abbr{wlog} we set $\bxs=\qs \sqrt{N} \be_1$ and have $\bxo$ sampled uniformly on ${\Bbb S}_{\bxs} (\qo)$ 
of \eqref{eq:subsphere}.  Recall the a.s.  existence of unique strong 
solutions for \eqref{diffusion-phi} with $\varphi_N=\bar H_{\V} + H^\star_\BJ$,  that consists of
the non-random potential 
of  \eqref{eq:bH-V} plus $H_\BJ$ conditioned 
on $\cptq$ of \eqref{eq:cond-star}.
With $K_N(t)=C_N(t,t)$,  it further follows from \red{\eqref{eq:exp-conf-Ho}} that 
\begin{equation}\label{342n-DS21}
\limsup_{N \to \infty} \frac{1}{N} \log \P (\|K_N\|_T \ge z) < 0\, ,  \qquad \forall z \ge z_0 
\end{equation}
(which replaces here \cite[(3.42)]{DS21} and \cite[(2.3)]{BDG2}).  
As in \cite[Remark 1.2]{DS21},  our solution $\bx_t$ is unchanged by 
mapping $\nu(\cdot) \mapsto \b^2 \nu(\cdot)$,  $\V \mapsto \b \V$ and setting $\b=1$ in \eqref{diffusion-phi}, 
so \abbr{wlog} we set hereafter $\b=1$.  Having done so,  we arrive at the stochastic differential system
\begin{align}
\bx_s=&\bxo+\BB_s - \int_0^s f'(K_N(u)) \bx_u du + \int_0^s \BG (\bx_u) du \nonumber \\
&+ \sqrt{N} \qs \Big[ \int_0^s \bv_x(q_N(u),C_N(u,0)) du \Big] \be_1 + \Big[ \int_0^s \bv_y(q_N(u),C_N(u,0))du \Big] \bxo \,,
\label{313n-DS21}
\end{align}
for $\BG=-\nabla H_{\BJ}^\star$ \red{and} $\bv(\cdot,\cdot)$ of \eqref{def:vt-new}.  

The convergence of $(C_N,\chi_N,q_N)$ to the non-random explicit limits of
 \cite[Thm.  1.2]{BDG2} and \cite[Thm.  1.1]{DS21}, 
 are established in the following four steps involving 
 a certain collection of random functions \red{$\Ua_N \supset (C_N,\chi_N,q_N)$:}\\
 (a).  Show that \red{$\Ua_N$} concentrate at $N \gg 1$ around their means \red{$\Ua^a_N$ and that
 $\{\Ua^a_N,   N \ge 1\}$ is pre-compact}.\\
 (b).  Derive \red{relations which any sub-sequential limit of $\{\Ua^a_N, N \geq 1\}$ must satisfy}. \\
 (c).  Show that \red{those relations amount to} a solution of the equations in \cite[Thm. 1.2]{BDG2} 
  or \cite[Thm. 1.1]{DS21}.
  \\
 (d).  By auxiliary,  Gronwall type arguments,  \red{show uniqueness of the latter solutions}.

\medskip 
Following here the same strategy,  we only list the necessary modifications in the definitions of the objects considered,
and when relevant,  in adapting certain specific arguments.  \red{We start by stating our modification of the enlarged
collection \red{$\Ua_N$},  first for those of \cite{DS21} (when $\qs>0$),  and then 
for the smaller collection of functions from \cite{BDG2} (when $\qs=0$).  To this end,  note that for $\qs>0$,  the law of
$(\bxo,\BJ,\BB)$ in \eqref{313n-DS21} is precisely $\P_\star$ of \cite[Pg.  481]{DS21},  with $H_{\BJ}^\star$
having} the covariance $\widetilde{k}(\bx,\by)$ of \cite[Lemma 3.7]{DS21}
(see \cite[(3.34)]{DS21},  \red{where} $\cptq={\sf CP}_\star$ of \cite[Pg.  481]{DS21}).
Thus,  \red{at $\qs>0$,  the only difference between \eqref{313n-DS21} and  \cite[(3.13)]{DS21}}
is the additional right-most-term \red{in} \eqref{313n-DS21},   \red{and we denote} 
by $\Ua_N$ the collection of random,  pre-limit
functions of \cite{DS21},  which consist of $(C_N,\chi_N)$ and the fifteen other functions of \cite[(3.2)-(3.8)]{DS21},  
apart from changing in \cite[(3.4)-(3.6)]{DS21},  to  
\begin{align}\label{34n-DS21}
\HN(s) =& \widehat{H}_N(s) + \bv(q_N(s),C_N(s,0)) \,,\\
\label{35n-DS21}
Q_N(s) =& -f'(K_N(s))q_N(s)+V_N(s) + \qs^2 \bv_x(q_N(s), C_N(s,0)) + \qo \bv_y (q_N(s),C_N(s,0)) \, , \\
\bar D_N(s,t) =& - f'(K_N(t)) \bar C_N(t,s) + \bar A_N(t,s)+\bar C_N(s,0) \bv_y(q_N(t),C_N(t,0)) \,.
\label{36n-DS21}
\end{align} 
Indeed,  \eqref{34n-DS21} registers our different conditional expectation field $\bar H_{\V}(\cdot)$,  with
\eqref{35n-DS21}-\eqref{36n-DS21} immediate consequences of \eqref{313n-DS21},  since by definition
\[
Q_N(s) := \frac{1}{N} \frac{d}{ds} \big[ \langle \bx_s-\BB_s,\bxs \rangle \big] \,, \qquad 
\bar D_N(s,t) := \frac{1}{N} \frac{d}{dt} \big[ \sum_{i=2}^N x^i_s (x^i_t-B^i_t) \big] \,.
\]
The case $\qs=0$ has the unconditional covariance $k(\cdot,\cdot)$ for $\BG(\cdot)$, 
as in \cite[Lemma 3.2]{BDG2},  and we retain in the second line of \eqref{313n-DS21} only the term 
$[\int_0^s \bv'(C_N(u,0)) du] \bxo$ for $\bv(r):=\Ep\nu(r)/\nu(1)$.  It then suffices,  as in the proof of
\cite[Thm. 1.2]{BDG2},  to consider in $\Ua_N$ only the functions $(C_N,\chi_N,\HN)$ and 
the auxiliary $(D_N,E_N,A_N,F_N)$ of \cite[(1.15)-(1.16)]{BDG2},  where \red{in our setting one only has} 
to add $C_N(s,0) \bv'(C_N(t,0))$ to $D_N(s,t)$ of \cite[(1.16)]{BDG2}.  

With these modifications in place,  we establish step (a) of the preceding program,  by the following 
analogs of  \cite[Prop.  2.3-2.4]{BDG2} and \cite[Prop.  3.1 \& 3.3]{DS21},  respectively.  We note in passing 
that the exponential in $N$ concentration rate of \eqref{310n-DS21},  guarantees the corresponding 
exponential in $N$ decay rate in \eqref{eq:exp-err-no-sup}.
\begin{prop}\label{prop:33-DS21}
Fix $T<\infty$,  $\qs \in [0,1]$ and an allowed $\V$ \red{(as in Definition \ref{ft:cons}).} Consider our modified
collection 
$\Ua_N$  in terms of 
\eqref{313n-DS21},  under the law $\P_\star$ 
on $\Ea_N := \R^N \times \R^{d(N,m)} \times \C([0,T],\R^N) $,  equipped
with 
\begin{equation}\label{eq:norm}
\red{\|(\bx_0, \BJ, \BB)\|^2= \| \bx_0\|^2 
+ \sum_{p=2}^m N^{(p-1)} \!\!\!\!\!\!\!\! \sum_{1 \leq i_1 \leq \ldots \leq i_{p} \leq N} \!\!\!\!\!\! (J_{(i_1\cdots i_{p})})^2
+ \sup_{0 \leq t \leq T}\| \BB_t \|^2 \,}
\end{equation}
\red{and the corresponding collection} $\Ua^a_N :=\{ \E_\star [U_N(s,t)] : U_N \in \Ua_N\}$ \red{of mean functions}.\\
(a).  Fix $k<\infty$,  $U_N \in \Ua_N$.  Then,  $\sup_N \{ \E_\star [ \|U_N\|_T^k] \}< \infty$ 
and $\{ \red{U^a_N}(s,t)\}_N$ is pre-compact in $(\Ca_b,\|\cdot\|_T)$. \\
(b).  For any $\rho>0$,
\begin{equation}\label{310n-DS21}
\limsup_{N \to \infty} \frac{1}{N} \log \P_\star(\|U_N - \red{U^a_N}\|_T \ge \rho) < 0 \,.
\end{equation}
\end{prop}
\begin{proof}[Proof of Proposition \ref{prop:33-DS21}]$~$\\
(a).  The Euclidean 
norm of the drift terms on the second line of \eqref{313n-DS21} is  
at most
\begin{equation}\label{eq:drift-ext-bd}
\kappa \sqrt{N} \int_0^s |K_N(u)|^{\frac{(m-1)}{2}} du,
\end{equation}
for some $\kappa=\kappa(\qs,\V)<\infty$.  When following the proofs of \cite[(2.2)]{BDG2} and \cite[(3.37)]{DS21}, 
the extra term $\kappa \int_0^{s \wedge \tau_M} C_N(t)^{\frac{m}{2}} dt$,  which the second line of \eqref{313n-DS21}
thus induces on the \abbr{rhs} of \cite[(2.5)]{BDG2} does not affect the above mentioned conclusions. 

For $\qs=0$,  the validity here of the analog of \cite[(2.2)]{BDG2} 
(restricted to time interval $[0,T]$ and with $q=k$ from \cite[(1.6)]{DS21}),  
combined with the tail condition \eqref{342n-DS21} and the bound \eqref{eq:drift-ext-bd} on
the norm of the term we added to $D_N$ in \cite[(1.16)]{BDG2},  suffice for   
the uniform boundedness of $\E_\star [ \|U_N\|_T^k]$ (see \cite[after (2.17)]{BDG2}).
Given such uniform boundedness,  pre-compactness is a direct consequence of the equi-continuity of
$\{\E_\star U_N(s,t)\}_N$ on $[0,T]^2$,  which boils down after a few applications of Cauchy-Shwarz,
to the equi-continuity of $N^{-1} \E_\star \big[  \| \bx_t-\bx_{t'}\|^2 \big]$ near the diagonal $t'=t$.  The latter
follows as in \cite[(2.18)]{BDG2},  now utilizing also the differentiability in $s$
of the second line in \eqref{313n-DS21}.  The proof of \cite[Prop.  2.3]{BDG2} similarly adapts to show that 
also here,  for $U_N \in \Ua_N$ and all $\epsilon>0$,
\begin{align}\label{349n-DS21}
\lim_{\delta \to 0} 
\limsup_{N \to \infty} \frac{1}{N} \log \P_\star (\sup_{\|(s',t')-(s,t)\| \le \delta} \{|U_N(s,t)-U_N(s',t')|\} \ge \epsilon) < 0 \,.
\end{align}

Similar reasoning applies for $\qs>0$.  First,  as explained in \cite{DS21},  the tail control \cite[(3.43)]{DS21} 
on $\|\BJ\|^N_\infty$ of \cite[(3.36)]{DS21} holds under $\E_\star$,  our bound \eqref{342n-DS21} 
replaces \cite[(3.42)]{DS21},  and as explained above,  we also have the
bound \cite[(3.37)]{DS21} (restricted to $[0,T]$).  Now,  combining these facts leads to uniform in $N$ 
bounds on the moments $\E_\star [(\|K_N\|_T)^k]$ and $\E_\star [(\|\BJ\|_\infty^N)^k]$ at any $k<\infty$. 
Utilizing these bounds,  one then follows the derivation leading to \cite[(3.49)]{DS21},  to arrive at the
uniform in $N$ moment bounds for $\|U_N\|_T$ under $\E_\star$ and at the equi-continuity,  hence pre-compactness,
of $(s,t) \mapsto \E_\star [U_N(s,t)]$.  Indeed,  see \cite[Proof of Prop.  3.3]{DS21} on why \cite[(3.49)]{DS21} applies 
under the law $\P_\star$ of \cite{DS21},  namely,  when changing only the centered field,  from 
$H_{\BJ}^{\qo}$ to $H_{\BJ}^\star$.  As for the change here due to the new terms
involving $\bv(\cdot)$ and its derivatives,  both on the \abbr{rhs} of \eqref{34n-DS21}-\eqref{36n-DS21} 
and in the second line of \eqref{313n-DS21},  having
the bound $\kappa (1+\|K_N\|_T^m)$ on the norms of the former, 
and  \eqref{eq:drift-ext-bd} on those of the latter,  allows us to mitigate their  
effect on $\E_\star [ \|U_N\|_T^k]$ by merely increasing certain universal constants.  
Similarly,  thanks to its differentiability in $s$,  the second line of \eqref{313n-DS21} does not 
change the equi-continuity of $N^{-1} \E_\star \big[  \| \bx_t-\bx_{t'}\|^2 \big]$ near the diagonal 
$t'=t$,  nor the $e^{-\Omega(N)}$-probability decay of $\sup_{|t-t'| \le \delta} \{ N^{-1} \|\bx_t-\bx_{t'}\|^2\}$
(which are shown by adapting \cite[(2.18)]{BDG2} to our setting,  just as was implicitly done  
in \cite{DS21} en-route to \cite[(3.49]{DS21}).  In addition,  with $\bv \in \Ca^2$,  the modulus of continuity of 
the new terms on the \abbr{rhs} 
of \eqref{34n-DS21}-\eqref{36n-DS21} 
and of their expectation under $\E_\star$ are
controlled by the corresponding quantities for $(q_N(\cdot),C_N(\cdot,0))$.  Upon utilizing this,  
both \eqref{349n-DS21} and the equi-continuity of $\E_\star U_N(s,t)$ for all 
$U_N \in \Ua_N$ follow by the corresponding arguments in \cite{DS21}.

\noindent
(b).  As in \cite{BDG2,DS21},  by utilizing \eqref{349n-DS21} and the union bound,  it suffices to show that 
for any $(s,t) \in [0,T]^2$,  $\rho>0$,
\begin{align}
\label{221n-BDG2}
\limsup_{N \to \infty} \frac{1}{N} \log \P_\star(|U_N(s,t) - \E_\star [U_N(s,t)]| \ge \rho) &< 0 \,.
\end{align} 
To this end,  recall that $\P_\star$ has the Lipschitz sub-Gaussian tails of
\cite[Hyp.  1.1,  $\alpha=2$]{BDG2} and $\P_\star(\La_{N,M}^c) = e^{-\Omega(N)}$
for $\La_{N,M} \subset \Ea_N$ of \cite[(3.45)]{DS21} and some $M=M(T,f,\nu,\qs,\qo)<\infty$
(cf.  \cite[Prop.  3.10,  (3.46),  and Sec.  3.3]{DS21}).  Fixing $(s,t) \in [0,T]^2$,  by part (a)
the function $U_N(s,t)=V_N(\bxo,\BJ,\BB)$ is $L^2$-bounded under $\E_\star$.  In addition,  with  
the second line of \eqref{313n-DS21} and the new terms in \eqref{34n-DS21}-\eqref{36n-DS21}
all bounded by $\kappa (1 + \|K_N\|^m_T)$,  it is not  hard to verify that 
$\sup\{ |V_N(\bxo,\BJ,\BB)|  : N \ge 1,  (\bxo,\BJ,\BB) \in \La_{N,M} \} < \infty$. Thus,  
as in \cite{BDG2,DS21},  we get \eqref{221n-BDG2} upon applying \cite[Lemma 2.5]{BDG2} 
to $V_N$,  once we verify that it has on $\La_{N,M}$ the Lipschitz property of \cite[(3.50)]{DS21}.
The latter property is established similarly to its proof in \cite[Lemma 3.9]{DS21},  which builds
on \cite[Lemma 2.7]{BDG2} (to take care of the subset $\Ua_N^\dagger$ from \cite[(3.2)-(3.3)]{DS21}).
Specifically,  we first adapt the Gronwall argument from the proof of \cite[Lemma 2.6]{BDG2} to the current setting,  
where having $\bv \in \Ca^2$ allows us to bound the contribution of the second line of \eqref{313n-DS21} to 
$e_N(t):=N^{-1} \|\bx_t-\widetilde{\bx}_t\|^2$,  similarly to the treatment in \cite{BDG2} 
of the terms $I_1$,  $I_3$ and $I_4$ in \cite[(2.23)]{BDG2}.  Moreover,  one is to show  
\cite[(3.50)]{DS21} for the functions on the \abbr{lhs} of \eqref{34n-DS21}-\eqref{36n-DS21},  
only after this has already been established for 
$C_N$ and $q_N$.  Hence,  
as $\bv \in \Ca^2$,  even with the new terms on the \abbr{rhs} of  \eqref{34n-DS21}-\eqref{36n-DS21},
we still arrive at the desired conclusion (of \cite[(3.50)]{DS21}).
\end{proof}

\red{We have} the following immediate consequence of Prop.  \ref{prop:33-DS21} 
(cf.  the proofs of \cite[Cor. 2.8]{BDG2} and \cite[(3.57)]{DS21}).
\begin{cor}\label{cor:32-DS21}
If $\Psi:\R^\ell \to \R$ is locally Lipschitz with $|\Psi(\bz)| \le M' \|\bz\|^k$ for finite $M',\ell,k$, 
and each coordinate of the vector $\BZ_N \in \R^\ell$ is of the form $\E_\star [U_N(s_j,t_j) |\Fa_{\tau_j}]$ 
for some $U_N \in \Ua_N$ and $(s_j,t_j,\tau_j) \in [0,T]^3$,  then
\begin{equation}\label{312n-DS21}
\lim_{N \to \infty} \sup_{s_j,t_j,\tau_j} \{ |\E_\star [ \Psi(\BZ_N) -\Psi(\E_\star \BZ_N)] | \} = 0 \,.
\end{equation}
\end{cor}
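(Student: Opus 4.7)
The plan is to show first that each coordinate $Z_{N,j}:=\E_\star[U_N(s_j,t_j)\mid\Fa_{\tau_j}]$ concentrates around its mean $\E_\star[U_N(s_j,t_j)]$ in every $L^p$, uniformly in the triple $(s_j,t_j,\tau_j)\in[0,T]^3$, and then to transfer this concentration through $\Psi$ by exploiting its local Lipschitz property together with the moment bounds supplied by Proposition \ref{prop:33-DS21}(a).

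For the first step, by conditional Jensen applied to the absolute value,
\[
\sup_{s,t,\tau\in[0,T]}\big|\E_\star[U_N(s,t)\mid\Fa_\tau]-\E_\star[U_N(s,t)]\big|
\;\le\; \sup_{\tau\in[0,T]} \E_\star\big[\,\|U_N-\E_\star U_N\|_T\,\big|\,\Fa_\tau\big],
\]
since the $\|\cdot\|_T$ norm dominates pointwise values in $(s,t)$. By Doob's $L^p$ maximal inequality applied to the nonnegative submartingale on the right (in the parameter $\tau$, with the filtration $\{\Fa_\tau\}$ suitably right-continuous), for any $p>1$,
\[
\E_\star\Big[\sup_{\tau\in[0,T]} \E_\star\big[\,\|U_N-\E_\star U_N\|_T\,\big|\,\Fa_\tau\big]^p\Big]
\;\le\; C_p\,\E_\star\big[\,\|U_N-\E_\star U_N\|_T^{\,p}\,\big].
\]
Combining the exponential concentration \eqref{310n-DS21} with the uniform moment bound $\sup_N\E_\star[\|U_N\|_T^{\,q}]<\infty$ from Proposition \ref{prop:33-DS21}(a), the right-hand side tends to $0$ as $N\to\infty$ for every $p<\infty$. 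Hence $\sup_{s,t,\tau}|Z_{N,j}-\E_\star Z_{N,j}|\to 0$ in $L^p$, and consequently $\sup_{s_j,t_j,\tau_j}\|\BZ_N-\E_\star\BZ_N\|\to 0$ in $L^p$ for every $p$.

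For the second step, fix $R<\infty$ and split
\[
|\Psi(\BZ_N)-\Psi(\E_\star\BZ_N)|
\le L(R)\,\|\BZ_N-\E_\star\BZ_N\|\,\mathbf{1}_{\|\BZ_N\|\vee\|\E_\star\BZ_N\|\le R}
+ M'\big(\|\BZ_N\|^k+\|\E_\star\BZ_N\|^k\big)\,\mathbf{1}_{\|\BZ_N\|\vee\|\E_\star\BZ_N\|>R},
\]
where $L(R)$ is the Lipschitz constant of $\Psi$ on the ball of radius $R$. Taking $\E_\star$ and using Cauchy--Schwarz on the tail term together with the uniform in $N$ (and in the triples) moment bounds for $\|\BZ_N\|$ inherited from Proposition \ref{prop:33-DS21}(a) via conditional Jensen, the tail contribution is made arbitrarily small by choosing $R$ large, while the bulk contribution vanishes as $N\to\infty$ by the first step, uniformly in $(s_j,t_j,\tau_j)$. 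Sending $N\to\infty$ and then $R\to\infty$ yields \eqref{312n-DS21}. The only mildly delicate point is the uniformity over $\tau$ in the conditioning, which is precisely what Doob's maximal inequality handles; everything else is routine given the concentration and moment bounds already in place.
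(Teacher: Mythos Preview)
Your proof is correct and follows essentially the same route that the paper defers to (the proofs of \cite[Cor.~2.8]{BDG2} and \cite[(3.57)]{DS21}): concentration plus uniform moment bounds, pushed through $\Psi$ by a truncation at radius $R$ combined with the local Lipschitz property. One minor simplification: the appeal to Doob's maximal inequality is unnecessary, since a second application of conditional Jensen (to the convex map $x\mapsto x^p$) already yields
\[
\E_\star\!\big[\E_\star[\|U_N-\E_\star U_N\|_T\mid\Fa_\tau]^p\big]\le \E_\star\!\big[\|U_N-\E_\star U_N\|_T^{\,p}\big],
\]
a bound that does not depend on $\tau$ (nor on $s_j,t_j$), so the uniformity over the triples is automatic without invoking any maximal inequality.
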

\red{In case $\qs=0$,  step (b) of our program is then the following analog of \cite[Prop. 1.3]{BDG2}.}
\begin{prop}\label{prop:13-BDG2}
Consider Proposition \ref{prop:33-DS21} at $\qs=0$.  Namely,  the modified collection 
$\Ua_N=(C_N,\chi_N,D_N,E_N)$ in terms of \eqref{313n-DS21} with $q_N \equiv 0$.
\red{Upon adding the term $C(s,0)\bv'(C(t,0))$ on the \abbr{rhs} of \cite[(1.19)]{BDG2},}
any $\|\cdot\|_T$-limit point of $\Ua^a_N$ satisfies \cite[(1.17)-(1.20)]{BDG2} subject to the
symmetry and boundary conditions of \cite[Lemma 4.1]{BDG2}.
\end{prop}

\red{For $\qs>0$ we similarly complete step (b) of our program by the following analog of \cite[Prop.  3.5]{DS21}.}
\begin{prop}\label{prop:35-DS21n}
For $\qs>0$,  consider the collection $\Ua_N$ consisting of $(C_N,\chi_N)$ and the
functions of \cite[(3.2)-(3.8)]{DS21},  modifying \cite[(3.4)-(3.6)]{DS21} as in \eqref{34n-DS21}-\eqref{36n-DS21}. 
\red{Setting} 
\begin{align}\label{320n-DS21}
Q(s) :=&-f'(K(s))q(s)+V(s) + \qs^2 \bv_x(q(s), C(s,0)) + \qo \bv_y (q(s),C(s,0)) \, , \\
\bar D(s,t) :=& - f'(K(t)) \bar C(t,s) + \bar A(t,s)+\bar C(s,0) \bv_y(q(t),C(t,0)) \,,
\label{321n-DS21}
\end{align}
any $\|\cdot\|_T$-limit point of $\Ua^a_N$ 
satisfies \cite[(3.19)-(3.31)]{DS21},  subject to the symmetry and boundary conditions of \cite[Prop.  3.5]{DS21} 
and the modification \eqref{320n-DS21}-\eqref{321n-DS21} of \cite[(3.20)-(3.21)]{DS21}. 
\end{prop}
By definition $C_N(s,t)=C_N(t,s)$,  $K_N(s)=C_N(s,s)$,  the boundary conditions
$K_N(0)=1$,   $q_N(0)=\qo$ and $E_N(s,0)=0$ hold,
with  
$E_N^a(s,t)=E_N^a(s,s)$ for $t \ge s$,  by the independence of $\bx_s$ and $\BB_t-\BB_s$.
These 
relations are retained by the expectation and as
$N \to \infty$,  so it suffices to show that up to an $o_N(1)$ error,  uniformly over $[0,T]^2$,  
when $\qs=0$ the functions $(C_N^a,\chi_N^a,D_N^a,E_N^a)$ satisfy the modified \cite[(1.17)-(1.20)]{BDG2},  
and for $\qs>0$ the larger collection $\Ua_N^a$ satisfies the modified 
\cite[(3.19)-(3.31)]{DS21}.  \red{We proceed to do so} by adapting \cite[Sect.  3]{BDG2} to our setting of $\qs=0$ and \cite[Sect. 4.1]{DS21} to that of $\qs>0$,  where for brevity,  we detail only the changes one must make in the various formulas. 
\begin{proof}[Proof of Proposition \ref{prop:13-BDG2}]
Adding
$x_0^i \int_0^s \bv'(C_N(u,0)) du$ to the \abbr{rhs} of \cite[(3.1)]{BDG2}
due to \eqref{313n-DS21},  translates
to $\int_0^s \E_\star [C_N(0,t) \bv'(C_N(u,0))] du$ 
and  $\int_0^s \E_\star [\chi_N(0,t) \bv'(C_N(u,0))] du$ added
on the \abbr{rhs} of \cite[(3.2)]{BDG2} and \cite[(3.3)]{BDG2},  respectively.  Then,  
Corollary \ref{cor:32-DS21} allows us to replace the latter terms,  as $N \to \infty$,   
by $C^a_N(0,t) \int_0^s \bv'(C^a_N(u,0)) du$ 
and  $\chi^a_N(0,t)\int_0^s  \bv'(C^a_N(u,0)) du$,  respectively.   With $\chi_N^a(0,t)=0$ 
(as $(\BB_t,  t\ge 0)$ is independent of $\bxo$),  we deduce from our update of $D_N(s,t)$ 
of \cite[(1.16)]{BDG2} and the preceding modification of \cite[(3.2)-(3.3)]{BDG2},  that 
limit points of $(C^a_N,\chi_N^a,D_N^a,E_N^a)$ must satisfy \cite[(1.17)-(1.18)]{BDG2}.
The proof is thus concluded by verifying that \cite[Prop.  3.1]{BDG2} extends to our setting.

For the latter task,  \cite[Lemmas 3.2-3.4]{BDG2} only change by subtracting 
the drift term $[\int_0^s \bv'(C_N(u,0)) du]  x_0^i$ we have added in \eqref{313n-DS21} 
from the \abbr{rhs} of \cite[(3.11)]{BDG2}.  This change yields the 
subtraction of $C_N(\cdot,u) \bv'(C_N(u,0))du$ from $d_u C_N(u,\cdot)$ on the last 
line of \cite[(3.18)]{BDG2},  the effects of which on the display above \cite[(3.20)]{BDG2} 
are consistent,  when expressing there $\widehat{A}^a_N(\cdot)$ in terms of 
$D^a_N(\cdot)$,  with subtracting $C^a_N(s,0) \bv'(C_N^a(t,0))$ 
from the \abbr{rhs} of \cite[(3.20)]{BDG2}.  By Corollary \ref{cor:32-DS21},  such 
a change in \cite[(3.20)]{BDG2} is consistent with our modification of \cite[(1.16)]{BDG2},  
and thereby we conclude that \cite[(3.4)]{BDG2} still hold in our current setting.  

When proving \cite[(3.5)]{BDG2} we similarly must subtract from $dx_u^i$ and $d_u C_N(s,u)$ 
on the \abbr{rhs} of \cite[(3.22)]{BDG2} the terms $x_0^i \bv'(C_N(u,0)) du$ and 
$C_N(s,0) \bv'(C_N(u,0)) du$,  respectively.  These are translated,  after some algebra,  to having on 
\cite[(3.24)]{BDG2} the additional quantities
\[
- \chi_N(0,t) \int_0^s \nu'(C_N(s,u))\bv'(C_N(u,0)) du  - \int_0^s \nu''(C_N(s,u)) \chi_N(u,t) C_N(s,0) \bv'(C_N(u,0)) du \
:= {\rm I} + {\rm II} \,.
\]
When taking the expected value of \cite[(3.24)]{BDG2},  and in particular,  of the preceding quantities,  one utilizes 
Corollary \ref{cor:32-DS21}.  In particular,  as $\chi_N^a(0,t)=0$,  clearly \red{$\| \E_\star [{\rm I}] \|_T \to 0$ as $N \to \infty$.}
Moreover,  combining $\E_\star [{\rm II}]$ with the expectation of the other two  
terms of \cite[(3.24)]{BDG2} that contain $\nu''(C_N(s,u)) \chi_N(u,t)$,  yields 
by our modification of \cite[(1.16)]{BDG2},  \red{that uniformly over $0 \le t \le s \le T$,   as $N \to \infty$,}
\begin{align*}
& \int_0^s  \nu''(C^a_N(s,u)) \chi^a_N(u,t)
\Big[  f'(K_N^a(u)) C_N^a(s,u) - \widehat{A}_N^a(u,s)  
- C^a_N(s,0) \bv'(C^a_N(u,0)) 
\Big]   du \\
+ & \int_0^s  \nu''(C^a_N(s,u)) \chi^a_N(u,t) D_N^a(s,u) du \red{\;\;  \to 0 \,.}
\end{align*}
This matches the right-most term of \cite[(3.5)]{BDG2},  with the remainder of the proof of \cite[(3.5)]{BDG2} unchanged
\footnote{note the typo in the display above \cite[Lemma 3.4]{BDG2},  where $C^a_N(s,u)$ should be replaced by 
$\chi_N^a(u,t)$}.
\end{proof}
\begin{proof}[Proof of Proposition \ref{prop:35-DS21n}] \red{We adapt \cite[Sec. 4.1]{DS21} to the current setting,  where the right-most term in the stochastic differential system of \cite[(3.13)]{DS21} has been replaced by 
the second line of \eqref{313n-DS21}.  Specifically},  thanks to Corollary \ref{cor:32-DS21},  the limit
identities \cite[(3.19)-(3.26)]{DS21} with \eqref{320n-DS21}-\eqref{321n-DS21} \red{instead of \cite[(3.20)-(3.21)]{DS21}},  follow by considering the $N \to \infty$ limit of the expected values in \cite[(3.2),  (3.5)-(3.8)]{DS21},  
with our modification \eqref{35n-DS21}-\eqref{36n-DS21}
(for the \abbr{rhs} of \cite[(3.19)]{DS21} recall that $\sup_N \E_\star [\|q_N\|_T^2]<\infty$).  
\red{Next,  to show that \cite[(3.27)]{DS21} also holds,  taking
the inner product of \eqref{313n-DS21} at $s=t$ with $N^{-1} (0,x_s^2,\ldots,x_s^N)$,  leads 
to\footnote{\purple{fixing the display above \cite[(4.1)]{DS21},  which erroneously has the integrand $\bar D_N(u,s)$.}}
\begin{equation}\label{41a-DS21}
\bar C_N(s,t) = \bar C_N(s,0) + \bar \chi_N(s,t)  
+ \int_0^t \bar D_N(s,u) d u 
\end{equation}
(by definition $\bar C_N$ of \cite[(3.2)]{DS21}} is unaffected by a drift in direction $\be_1$ 
and \eqref{36n-DS21} incorporated within $\bar D_N(s,u)$ the extra 
term  $\bar C_N(s,0)  \bv_y(q_N(u),C_N(u,0))$ whose integral over 
$[0,t]$ matches the contribution to $\bar C_N(s,t)$ from the drift in direction $\bxo$ in \eqref{313n-DS21}).  
\red{Similarly,  the inner product of \eqref{313n-DS21} and $N^{-1} (0, B_t^2,\ldots,B_t^N)$ yields that   
\begin{equation}\label{41b-DS21}
\bar \chi_N(s,t) = \bar \chi_N(0,t) + \frac{1}{N} \sum_{i=2}^N B_s^i B_t^i  + 
\int_0^s \bar E_N(u,t) d u + \int_0^s \bar \chi_N(0,t) \bv_y(q_N(u),C_N(u,0)) du \,,
\end{equation}
with $\bar E_N$ as in \cite[(3.6)]{DS21}.  We now get the \abbr{lhs} of \cite[(3.27)]{DS21} upon taking 
the $\E_\star$ expectation of \eqref{41a-DS21},  followed by $N \to \infty$.  Further,  operating similarly on 
\eqref{41b-DS21},   yields the \abbr{rhs} of \cite[3.27]{DS21},  since 
$\bar \chi_N^a(0,t)=0$
(so by Corollary \ref{cor:32-DS21} the contribution of the new,  right-most term in \eqref{41b-DS21} is negligible).}
Having confirmed that \cite[(3.27)]{DS21} holds,  and with \cite[(3.28)-(3.31)]{DS21} merely rephrasing 
\cite[(4.3)-(4.6)]{DS21},  we complete the proof by verifying the latter four relations.   
\red{Moreover,  since} $H_{\BJ}^\star$ shares the covariance kernel of \cite[(3.34)]{DS21},  
we can do so with minimal changes to \red{the proof of  \cite[(3.28)-(3.31)]{DS21}}.
Specifically,  \red{replacing} the right-most term of 
\cite[(3.13)]{DS21} \red{by} those of \eqref{313n-DS21} translates to the corresponding 
differences in the formulas for $Q^i_{s;\tau}$ and $U_s^i$ at 
\cite[\abbr{rhs} of (4.9)]{DS21} and \cite[top line of (4.14)]{DS21},  respectively.  Nevertheless,  
these changes are consistent with our modifications \eqref{35n-DS21}-\eqref{36n-DS21} 
of $Q_N$ and $\bar D_N$,  so the relations of \cite[(4.17)]{DS21} are retained here.  \red{Thereafter, } 
beyond \cite[Lemma 4.2]{DS21} the proof of \cite[Prop.  4.1]{DS21} proceeds with no further changes.
\end{proof}

\red{Step (c) of our program}
embeds $\bUqf(\V)$ within the enlarged limit dynamics of Prop. \ref{prop:13-BDG2}-\ref{prop:35-DS21n}.\\
\red{Specifically,  for $\qs=0$ we have the following replacement of \cite[Lemma 4.1]{BDG2}}.
 \begin{prop}\label{lem:41-BDG}
If $(C,\chi,D,E) \in \Ca_b([0,T]^2,\R^4)$ satisfies \cite[(1.17)-(1.20)]{BDG2} subject to the
symmetry and boundary conditions of \cite[Lemma 4.1]{BDG2} and our modification of \cite[(1.19)]{BDG2}, 
then $(C,\chi,H)=\bUof(E)$ of Definition \ref{def:limit-dyn-high-temp} at $\b=1$.
\end{prop}
\begin{proof}
Up to \cite[(4.2)]{BDG2},  the proof of \cite[Lemma 4.1]{BDG2}
applies as is.  On the \abbr{rhs} of that formula we now have the extra term $C(s,0)\bv'(C(t,0))$,
resulting with having here \eqref{eqCs-new} and \eqref{def:ACs} at $\b=1$ and $\mu(s)=f'(K(s))$.  
Continuing with that proof,  but 
with $C(s,0)\bv'(C(s,0))$ now being added to $D(s,s)$ (due to \cite[(1.19)]{BDG2}),  results
with \eqref{eqfZs} holding (at $\b=1$).  The remainder of the proof of 
\cite[Lemma 4.1]{BDG2},  led to \cite[(1.12)]{BDG2},  which being the same as \eqref{eqRs-new},
requires no change.
\end{proof}
\red{Similarly,  for $\qs>0$ we have,  as our step (c),  the following analog of \cite[Prop.  3.6]{DS21}}.
\begin{prop}\label{prop:36-DS21n}
Suppose $\qs>0$ and $\bU=(C,\chi,q,\hat{H}) \in \Ca_b([0,T]^2,\R^4)$ satisfies \cite[(3.19)-(3.31)]{DS21},  
subject to the symmetry and boundary conditions of \cite[Prop.  3.5]{DS21} and the  
modification \eqref{320n-DS21}-\eqref{321n-DS21} of \cite[(3.20)-(3.21)]{DS21}.  Then,  
$\bU=\bUqf(\V)$ is the $f$-dynamic of Definition \ref{def:limit-dyn-low-temp} at $\b=1$. 
\end{prop}
\begin{proof}
Apart from \eqref{34n-DS21} converting \cite[(1.21)]{DS21} into \eqref{eqH-new}, 
the proof of \cite[Prop.  3.6]{DS21} applies as is,  up to the \abbr{rhs} of \cite[(4.37)]{DS21} 
where \eqref{321n-DS21} results with the extra term 
$\bar{C}(s,0) \bv_y(q(t),C(t,0))$.  Continuing as in \cite{DS21},  we get the 
same explicit expression for $V(s)$ of \cite[(3.28)]{DS21},  which together 
with \eqref{320n-DS21} leads to \eqref{eqqs} with $\mu(s)=f'(K(s))$ and $\widehat{{\sf A}}_q$ of \eqref{def:Aq-new},
at $\b=1$.  Likewise,  substituting  
\eqref{320n-DS21}-\eqref{321n-DS21} in \cite[(4.35)]{DS21} 
and combining it with the revised \abbr{rhs} of \cite[(4.37)]{DS21},  converts
the right-most term on \cite[(4.38)]{DS21} to $q(s)\bv_x(q(t),C(t,0))+C(s,0) \bv_y(q(t),C(t,0))$,
which after exchanging $s$ with $t$ results in \eqref{eqCs-new} for  $\widehat{{\sf A}}_C$
of \eqref{def:AC-new} (also at $\b=1$).  Similarly,  with 
$D(s,s) = q(s) Q(s)/\qs^2+\bar D(s,s)$,  
substituting \eqref{320n-DS21}-\eqref{321n-DS21} on the \abbr{rhs} of \cite[(4.39)]{DS21} verifies
that \eqref{eqfZs} holds with that same $\widehat{{\sf A}}_C$
(and $\b=1$).  \red{As with our proof of Prop.  \ref{lem:41-BDG}},  the last part of the proof 
of \cite[Prop.  3.6]{DS21},  leads,  as is,  to \eqref{eqRs-new} (also at $\b=1$).
\end{proof}

\red{Step (d) of our program,  namely} the uniqueness of the $f$-dynamics of Definitions 
\ref{def:limit-dyn-low-temp} and \ref{def:limit-dyn-high-temp},  is established 
in Proposition \ref{uniqueness}(a).  \red{Recall Prop.  \ref{prop:13-BDG2}-\ref{prop:36-DS21n} that}
any $\|\cdot\|_T$-limit 
point of $\E_\star(C_N,\chi_N,q_N,\HN)$ is of the form of our $f$-dynamics,
\red{which by Prop.  \ref{prop:33-DS21} completes our} 
proof of Proposition \ref{prop-macro-easy} (cf.  \cite[Proof of Thm.  1.1]{DS21}).

\section{Limiting dynamics: uniqueness and continuity}\label{sec:cont-lim}

We first establish the uniqueness of the dynamics $\bUqf(\V)$ and $\bUsp(\V)$
from Definitions \ref{def:limit-dyn-low-temp} and \ref{def:limit-dyn-high-temp}.
We then prove Prop.  \ref{prop:nu-cont} and  \ref{prop:ell-lim} about the 
continuity of $\bUqf(\V;\nu)$ \abbr{wrt} $\nu(\cdot)$ and $f(\cdot)$,  respectively,
and conclude by proving  \red{Propositions \ref{lem:41-BDG} and \ref{prop:36-DS21n}} 
(that embed $\bUqf(\V)$ in the limit dynamics \red{for the relevant collection $\Ua_N^a$}).

\begin{prop}\label{uniqueness}
Let $T<\infty$,  $\D_T=\{s,t\in (\R^+)^2 : 0\le t\le s\le T\}$ and $1 < r < \bar r$.\\
(a).  For $\qs \in (0,1]$ and $\sfv(\cdot,\cdot)$,  $f(\cdot)$ 
of uniformly Lipschitz derivatives on $([-r^2,r^2])^2$ and $[0,r^2]$,  respectively,  
there exists at most one solution $(R,C,q,K) \in \Ca_b^1( \D_T)^2 \ts \Ca_b^1([0,T])^2$ 
to \eqref{eqRs-new}--\eqref{eqqs},  \eqref{def:AC-new}-\eqref{eqL},  \eqref{eqfZs} 
with $C(s,t)=C(t,s)$,  stopped at $\tau_\star:=\inf\{s \ge 0: K(s) \ge r^2\}$,  and having the boundary conditions
\begin{align}\label{bc-uniq}
q(0)=\qo,  \qquad R(s,s)\equiv 1,  \qquad C(s,s)=K(s)\, ,  \quad\forall s\ge 0 \,.
\end{align}
The same applies for $(R,C,K)$ of \eqref{eqRs-new}-\eqref{eqCs-new},  \eqref{eqfZs}-\eqref{def:ACs} 
with the boundary conditions
of \eqref{bc-uniq}.\\
(b).  Similarly,  for $\qs \in (0,1]$ there is at most one solution $(R,C,q) \in \Ca_b^1(\Delta_T)^2 \times \Ca_b^1([0,T])$ 
to \eqref{eqRs-new}-\eqref{eqqs},  \eqref{eqZs-new}-\eqref{eqL},   
with $C(s,t)=C(t,s) \in [-1,1]$ and boundary conditions \eqref{bc-uniq} with $K(s) \equiv 1$.
This also applies for $(R,C)$ of \eqref{eqRs-new}-\eqref{eqCs-new},  \eqref{eqZs-new} and \eqref{def:ACs},
with $C(s,t)=C(t,s) \in [-1,1]$ and $R(s,s) \equiv C(s,s) \equiv 1$.
\end{prop}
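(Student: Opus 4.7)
The plan is to establish both parts by a coupled Gronwall-type argument on the differences of two putative solutions, essentially following the strategy of \cite[Prop.~6.2]{DS21} (and its antecedent in \cite[Sec.~4]{BDG2}), adapted to accommodate the additional state-variables $q(\cdot)$ and $K(\cdot)$ together with the new terms involving $\bv(x,y)$, $\bv_x$, $\bv_y$ arising from Remark \ref{rem:DS21}. Fix two solutions $(R_i,C_i,q_i,K_i)$, $i=1,2$, stopped at $\tau_\star$, with induced $\mu_i$, $L_i$, $\widehat{\sf A}_{C,i}$, $\widehat{\sf A}_{q,i}$. Since everything is in $\Ca_b^1$ and $K_i(s)\le r^2$ on $[0,\tau_\star]$, there is a uniform constant $M=M(r,T,\qs,\V,\b,f,\nu)$ bounding $\|R_i\|_T,\|C_i\|_T,\|q_i\|_T,\|K_i\|_T,\|\mu_i\|_T,\|L_i\|_T$; on the relevant compact sets, $f'(\cdot)$, $\nu'(\cdot)$, $\nu''(\cdot)$ and $\bv(\cdot,\cdot)$ together with $\bv_x,\bv_y$ are all uniformly Lipschitz with a common constant $\Lambda=\Lambda(M)$.

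Introduce, for $s\in[0,T\wedge \tau_\star]$, the scalar error
\[
\Phi(s):=\sup_{u\le s}\sup_{t\le u}\big\{|\Delta R(u,t)|+|\Delta C(u,t)|\big\}\;+\;\sup_{u\le s}\big\{|\Delta q(u)|+|\Delta K(u)|+|\Delta L(u)|+|\Delta\mu(u)|\big\},
\]
where $\Delta(\cdot)=(\cdot)_1-(\cdot)_2$. The first key step is to bound the \emph{auxiliary} differences by the primary ones: from \eqref{eqL}, the Lipschitz property of $\nu'$ and the boundedness of $R_i,q_i$,
\[
|\Delta L(s)|\le C\!\int_0^s\!\big[|\Delta R(s,u)|+|\Delta q(u)|\big]du,
\]
and likewise from \eqref{def:AC-new}, \eqref{def:Aq-new} (using Lipschitz of $\nu',\nu'',\bv_x,\bv_y$ on the compact range of $(q_i,C_i(\cdot,0))$),
\[
|\Delta\widehat{\sf A}_C(s,t)|+|\Delta\widehat{\sf A}_q(s)|\le C\!\int_0^s\!\Phi(u)\,du+C\,\Phi(s).
\]
The second step is to translate the ODEs \eqref{eqRs-new}--\eqref{eqqs} and \eqref{eqfZs} into integral equations by integrating in $s$ with the boundary conditions \eqref{bc-uniq}, and to bound $|\Delta\mu(s)|$ via $\mu_i(s)=f'(K_i(s))$ (part (a), using Lipschitz of $f'$ on $[0,r^2]$) or via \eqref{eqZs-new} and the bound on $|\Delta\widehat{\sf A}_C(s,s)|$ from the previous step (part (b)). Combining with the triangle inequality and the bounded coefficients, one obtains, for each of $\Delta R(s,t)$, $\Delta C(s,t)$, $\Delta q(s)$ and $\Delta K(s)$, an estimate of the form
\[
|\Delta X(s,t)|\le C'\!\int_0^s\Phi(u)\,du,
\]
hence $\Phi(s)\le C''\!\int_0^s\Phi(u)\,du$, and Gronwall's lemma forces $\Phi\equiv 0$ on $[0,T\wedge\tau_\star]$.

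The principal obstacle is keeping the $\mu$-coupling under control: $\mu(s)$ appears as a multiplicative coefficient in \eqref{eqRs-new}--\eqref{eqqs}, and in part (b) it is itself defined through $\widehat{\sf A}_C(s,s)$, which, via \eqref{def:AC-new}, involves the unknown $(R,C,q,L)$ up to time $s$. One must therefore avoid a circular bound of the form $|\Delta\mu(s)|\le c\,\Phi(s)$ swallowing the $\Phi(s)$ term on the right: this is handled by first writing the right-most contribution of $\widehat{\sf A}_C(s,s)$ (the one through $\bv$ and the boundary data at $u=s$) as a Lipschitz function of $(q(s),C(s,0),K(s))$ already controlled by $\Phi(s)$, and relegating the remaining (integral) contributions to $\int_0^s\Phi(u)du$; the resulting closure yields the clean Gronwall inequality above. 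The cases of $(R,C,K)$ without $q$ in part (a), and $(R,C)$ without $q$ in part (b), are obtained by dropping the $q$-coordinate (and, correspondingly, the terms depending on $q$ in $\widehat{\sf A}_C$ and $L$) from the same argument.
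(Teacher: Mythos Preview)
Your approach is the same Gronwall-on-differences strategy the paper uses (it simply cites \cite[Sec.~3.3]{DS21} and \cite[Page 509]{DS21} and notes the modifications due to $\bv(x,y)$), and your handling of the potential circularity through $\mu$ in part (b) is fine: once $|\Delta q(s)|$ and $|\Delta C(s,0)|$ are themselves bounded by $C\int_0^s\Phi$ via their integrated equations, the non-integral contributions to $|\Delta\widehat{{\sf A}}_C(s,s)|$ fall in line.

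The one substantive point you skip---and it is the only thing the paper's proof singles out---is why the arguments $C_i(u,v)$ and $q_i(u)$ of $\nu',\nu'',\bv_x,\bv_y$ stay in $[-r^2,r^2]$ up to $\tau_\star$. You take a bound $M$ from membership in $\Ca_b^1$ and then invoke Lipschitz constants on $[-M,M]$; but for an infinite mixture $\nu$ is only real analytic on $(-\bar r^2,\bar r^2)$, so if $M\ge\bar r^2$ your $\Lambda(M)$ need not exist (and the hypotheses on $\bv$ and $f$ are likewise only on $[-r^2,r^2]^2$ and $[0,r^2]$). Stopping at $\tau_\star$ controls only the diagonal $K(s)=C(s,s)<r^2$, not off-diagonal $C(s,t)$ or $q(s)$. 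The paper closes this via the non-negative definiteness of $C(s,t)-q(s)q(t)/\qs^2$ (Remark \ref{rem:non-neg}): from it and $K<r^2$ one gets $|C(s,t)|^2\le K(s)K(t)<r^4$ and $q(s)^2\le \qs^2 K(s)<r^2$, confining every argument to $(-r^2,r^2)$. In part (b) the hypothesis $C\in[-1,1]$ already does this. You should insert this confinement step before appealing to the Lipschitz bounds.
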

\begin{proof}$~$\\
(a).  The case $\qs>0$ has the system of equations and boundary conditions of
 \cite[Prop. 3.4]{DS21} apart from having now $\nu''(\cdot)$ and $f'(\cdot)$ Lipschitz only on 
$[-r^2,r^2]$ and $[0,r^2]$,  while replacing the functions $q(t) \bv_\star'(q(s))$ and $\qs^2 \bv_\star'(q(s))$
for $\bv(\cdot)$ of \cite[(1.22)]{DS21},  by
$q(t) \bv_x(\cdot) + C(t,0)\bv_y(\cdot)$ and $\qs^2 \bv_x(\cdot)+\qo \bv_y(\cdot)$,  respectively, 
at $(q(s),C(s,0))$.  Nevertheless,  the same Gronwall's lemma reasoning as in \cite[Sec.  3.3]{DS21}
applies here as well,  provided the arguments $\{q(u), C(u,v)$,  for $ u,v \le s\}$ of 
$\nu(\cdot)$,  $\nu'(\cdot)$, $\nu''(\cdot)$,  $\bv_x(\cdot)$ and $\bv_y(\cdot)$ are all 
in $[-r^2,r^2]$ with the arguments of $f'(\cdot)$ in $[0,r^2]$.
It is easy to see that this must hold up to $\tau_\star$ (thanks to the non-negative 
definiteness of $C(s,t)-q(s) q(t)/\qs^2$,  see Remark \ref{rem:non-neg}),  \red{which thereby yields the 
uniqueness of the solution  up to $\tau_\star$.  Since any solution $(R,C,K)$ of} \eqref{eqRs-new}-\eqref{eqCs-new} and \eqref{eqfZs}-\eqref{def:ACs},  \red{together with $q(s) \equiv 0$,  also satisfies  
\eqref{eqqs} and  \eqref{def:AC-new}-\eqref{eqL}
in case $\bv(x,y)=\bv(0,y)$ and $\qo=0$,  
we have thus established 
the uniqueness of such $(R,C,K)$}.
\\
(b).  Having $K(s) \equiv 1$ confines,  in view of Remark \ref{rem:non-neg},  all 
arguments of $\nu(\cdot)$,  $\nu'(\cdot)$, $\nu''(\cdot)$,  $\bv_x(\cdot)$ and $\bv_y(\cdot)$ to be in 
$[-1,1]$,  and those of $f'(\cdot)$ to $[0,1]$,  where these functions are globally Lipschitz.  In case $\qs>0$
our equations for $(R,C,q)$ are those of \cite[(1.30)-(1.33)]{DS21} apart from modifying
their functions $q(t) \bv_\star'(q(s))$ and $\qs^2 \bv_\star'(q(s))$,  precisely as in part (a).
The proof of uniqueness of their solution,  on \cite[Page 509]{DS21},  requires only
that the latter functions be Lipschitz on $[-1,1]$,  and it applies here verbatim,  even after our modification as in part (a).
Next,  note that for $\bv(x,y)=\bv(0,y)$ and $\qo=0$,  combining any solution $(R,C)$ 
of \eqref{eqRs-new}-\eqref{eqCs-new},  \eqref{eqZs-new} and \eqref{def:ACs},   with $q(s) \equiv 0$,  yields a solution of
\eqref{eqRs-new}-\eqref{eqqs} and \eqref{eqZs-new}-\eqref{eqL}.  Uniqueness of the former solution 
is thus a direct consequence of the preceding proof (for $\qs>0$).
\end{proof}

\begin{proof}[Proof of Proposition \ref{prop:nu-cont}] \purple{Equip $\Ca^k(([-r_1^2,r_1^2])^d)$ for $d=1,2$}, 
with a norm $\|\cdot\|^{(k)}_\star$ of uniform convergence of the function and its first $k$ partial derivatives.  
With $\nu(\cdot)$ real analytic on $(-\bar r^2,\bar r^2)$,  by Definition \ref{def:H-m-coupling}
\begin{equation}\label{eq:nu-m-cnv}
\|\nu^{[m]}\|^{(3)}_\star \uparrow \|\nu\|^{(3)}_\star < \infty \quad \hbox{and} \quad  \| \nu^{[m]} - \nu \|^{(3)}_\star \to 0 \,.
\end{equation}
We assume \abbr{wlog} that $\nu(\cdot)$ is an infinite mixture
and that $\nu^{[m]}(\cdot)$ which are not pure $p$-spins,  have the same even/odd symmetry as $\nu(\cdot)$.  
In case $\qs>0$ and $|\qo|<1$,  set $\bv^{[m]}(x,y)$ by \eqref{def:vt-new}-\eqref{def:w-s} 
for the mixture $\nu^{[m]}$ and the prescribed $\V$.  It then follows 
from Lemma \ref{lem:app}(a) that $\Sigma^{-1}_{\nu^{[m]}}(\qo) \to \Sigma^{-1}_{\nu}(\qo)$,  which implies 
in turn that $\| \bv^{[m]} - \bv\|_\star^{(2)} \to 0$.  The latter applies also for $\qs=0$,  where
$\bv^{[m]}(y):=E \nu^{[m]}(y)/\nu^{[m]}(1)$.  Finally,  if $|\qo|=\qs=1$ then  
$\bxs=\qo \bxo$,  $q(\cdot) = \qo C(\cdot,0)$ and we accommodated the possible rank one drop in \eqref{def:w-s} 
by requiring that $\Ep=\Eg$ if $\qo=1$ and $\Ep=\pm \Eg$ if
$\qo=-1$ with $\nu(\cdot)$ even or odd (see Footnote \ref{ft:degen}).  In such cases 
$w_4=0$ (see Remark \ref{rem:w-unique}),  and \abbr{wlog} $w_2 = \pm w_1$.  Thus,   
solving the reduced system in \eqref{def:w-s} for $(w_1,w_3)$ and replacing \eqref{def:vt-new} by
$\bv(y)=w_1 \nu(y) \pm w_3 y \nu'(y)$,  extends to all $|\qo| \le \qs$ the convergence 
\begin{equation}\label{eq:bv-m-cnv}
\|\bv^{[m]}-\bv\|^{(2)}_\star \to 0\,,   \quad \hbox{with} \quad \limsup_m \|\bv^{[m]}\|^{(2)}_\star < \infty  \,.
\end{equation} 
Next,  integrating \eqref{eqRs-new}-\eqref{eqqs} and \eqref{eqfZs} over $s \ge t$,  we get
from Definition \ref{def:limit-dyn-low-temp},  similarly to \cite[Pg.  508]{DS21},  
that $(R,C,q)$ of $\bUqf(\V;\nu)$ are the unique solution 
in $\Ca(\Delta_T,\R^3)$ of the equations 
\begin{align}\label{eqRs-m}
R(s,t) &= 1 + \int_t^s {\sf A}_R [R,C,q;\bv,\nu] (\theta,t) d\theta\,,  \qquad \qquad \qquad  \qquad \qquad \qquad \qquad 
\qquad \qquad \forall s \ge t \ge 0 \,,\\
\label{eqCs-m}
C(s,t) &= 1 + \int_0^t \big\{1 + 2 {\sf A}_C [R,C,q;\bv,\nu] (\theta,\theta) \big\} d\theta
 + \int_t^s {\sf A}_C [R,C,q;\bv,\nu] (\theta,t) d\theta\,,  \qquad \forall s \ge t \ge 0  \,, \\
\label{eqqs-m} 
q (s) &=  \qo + \int_0^s {\sf A}_q [R,C,q;\bv,\nu] (\theta) d\theta\,,  \qquad \qquad \qquad \qquad \qquad \qquad \qquad \qquad\qquad \qquad  \forall s \ge 0 \,,
\end{align}
where setting $\mu(\theta):=f'(C(\theta,\theta))$,  we have in view of \eqref{eqRs-new} and \eqref{def:AC-new}-\eqref{eqL},
that at any $\theta \ge t \ge 0$, 
\begin{align}
{\sf A}_R [R,C,q;\bv, \nu](\theta,t) 
&= - \mu(\theta) R(\theta,t) + \b^2 \int_t^\theta R(u,t) R(\theta,u) \nu''(C(\theta,u)) du ,
 \label{def:AR}
\\
{\sf A}_C [R,C,q;\bv,\nu](\theta,t) 
&= - \mu(\theta) C(\theta,t) + \b^2 \int_0^\theta C(u , t) R(\theta,u) \nu''(C(\theta,u)) \, du + \b^2 \int_0^t R(t,u) \nu'(C(\theta,u)) \, du  \nonumber \\
-  \b^2 q(t) & \nu''(q(\theta)) L(\theta) - \b^2 \nu'(q(\theta)) L(t) + \b q(t) \bv_x (q(\theta),C(\theta,0)) + \b C(t,0) \bv_y (q(\theta),C(\theta,0)) \,,
\label{def:AC}
\\
{\sf A}_q [R,C,q;\bv,\nu](\theta) 
&= - \mu(\theta) q(\theta) + \b^2 \int_0^\theta R(\theta,u) q(u) \nu''(C(\theta,u)) du 
\nonumber \\ &- \b^2 \qs^2 \nu''(q(\theta)) L(\theta)
                + \b [\qs^2 \bv_x (q(\theta),C(\theta,0)) + \qo \bv_y(q(\theta),C(\theta,0))] \,,
\label{def:Aq} 
\end{align}
and $L(\theta) =  \frac{1}{\nu'(\qs^2)} \int_0^\theta R(\theta,u) \nu'(q(u)) dv$ if $\qs>0$,  while 
$L \equiv q \equiv 0$ if $\qs=0$ (see Definition \ref{def:limit-dyn-high-temp}).  Further,  
\begin{equation}\label{eqH-F4}
H(s) = {\sf A}_{H}[R,C,q;\bv,\nu] (s) := \b \int_0^s R(s,u) \nu'(C(s,u)) \, du  -  \b \nu'(q(s)) L(s) + \bv (q(s),C(s,0)) \,.
\end{equation}
In particular,  $(R^{[m]},C^{[m]},q^{[m]})$ of $\bUqf(\V;\nu^{[m]})$ satisfy \eqref{eqRs-m}-\eqref{eqqs-m}
at $(\bv,\nu)=(\bv^{[m]}, \nu^{[m]})$.  Setting $\mu^{[m]}:=f'(K^{[m]})$ for 
$K^{[m]}(s):=C^{[m]}(s,s)$, 
by Remark \ref{rem:non-neg} 
and the \abbr{lhs} of \eqref{eq:m-cont-Uf},  for some $m_o$ finite and all $m \ge m_o$, 
\begin{equation}\label{eq:C-q-m-ubd}
\|q^{[m]}\|_T \le \qs \| K^{[m]} \|_T^{\frac{1}{2}} \le r_1\,,  \quad 
\|C^{[m]}\|_T = \|K^{[m]}\|_T \le r_1^2 \quad \hbox{and}  \quad \|\mu^{[m]}\|_T \le \|f'\|_\star \,.
\end{equation}
With $\|\nu''^{[m]}\|_\star \le \|\nu''\|_\star < \infty$,  we further get from 
\eqref{eq:C-q-m-ubd} and \eqref{eqRs-new} at $\mu(\cdot)=\mu^{[m]}(\cdot)$ that 
\begin{equation}\label{eq:R-m-ubd}
\sup_{m \ge m_o} \{ \|R^{[m]}\|_T \} \le \kappa
\end{equation}
for $\kappa=\kappa(\|f'\|_\star,\b^2 \|\nu''\|_\star,T)$ finite
(see \cite[(2.2)]{DGM}).  Namely,  $\{ (\mu^{[m]},R^{[m]},C^{[m]},q^{[m]}),  m \ge m_o\}$ is contained in 
\begin{equation}\label{def:La-kap}
\La_{\kappa,r_1} := \{(\mu,R,C,q) \in \Ca(\Delta_T,\R^4)  : \|\mu\|_T \le \kappa,   \|R\|_T \le \kappa,  \|C\|_T \le r_1^2,  \|q\|_T \le r_1\} \,.
\end{equation}
For any $(\mu,R,C,q) \in \La_{\kappa,r_1}$ 
all arguments of the functions $(\bv, \bv_x, \bv_y, \nu',\nu'')$ that appear in $\underline{{\sf A}}:=({\sf A}_R,{\sf A}_C,{\sf A}_q,
{\sf A}_H)$ 
are within the interval $[-r_1^2,r_1^2]$ on which these functions are globally Lipschitz.  Let $\underline{{\sf A}}^{[m]}$
denote the functional $\underline{{\sf A}} \in \Ca(\Delta_T,\R^4)$ evaluated at $(\mu^{[m]}, R^{[m]},C^{[m]},q^{[m]})$ 
and $(\bv^{[m]},\nu^{[m]})$.  With $\|\bv^{[m]}\|^{(1)}_\star$ and $\|\nu^{[m]}\|_\star^{(2)}$ bounded,  we thus 
deduce from \eqref{eq:C-q-m-ubd}-\eqref{eq:R-m-ubd} that 
$\{  \|\underline{\sf A}^{[m]}\|_T,  m \ge m_o\}$ is bounded.  As $f'(\cdot)$ is Lipschitz on $[0,r_1^2]$,  in view of
\eqref{eqRs-m}-\eqref{eqqs-m},   the collection 
$\{(\mu^{[m]},R^{[m]},C^{[m]},q^{[m]}),  m \ge m_o \} 
$
is equi-continuous and uniformly bounded.  By Arzel\`a-Ascoli,   $\{(\mu^{[m]},R^{[m]},C^{[m]},q^{[m]})\}$ is 
pre-compact in $\La_{\kappa,r_1}$.  Fixing a $\|\cdot\|_T$-limit point of this sequence,  
say $(\mu,R,C,q) \in \La_{\kappa,r_1}$,  clearly $\mu=f'(K)$ and  
for some $M=M(\kappa,r_1,T,\|\nu\|^{(3)}_\star, \|\bv\|_\star^{(2)})$ finite
and all $m \ge m_o$,
\[
\|\underline{\sf A}^{[m]}-  \underline{\sf A}[\mu,R,C,q;\bv,\nu] \|_T \le M \Big[ \|
(\mu^{[m]},R^{[m]},C^{[m]},q^{[m]})-(\mu,R,C,q)\|_T 
+ \| \bv^{[m]} - \bv\|^{(1)}_\star + \|\nu^{[m]}-\nu\|^{(2)}_\star \Big] \,.
\]
Fixing a sub-sequence $\{m_\ell\}$ along which the first term on the \abbr{rhs} goes to zero, 
we deduce from \eqref{eq:nu-m-cnv}-\eqref{eq:bv-m-cnv} that 
$\underline{\sf A}^{[m_\ell]} \to \underline{\sf A}[\mu,R,C,q;\bv,\nu]$ in $(\Ca(\Delta_T,\R^4),\|\cdot\|_T)$.  
Consequently,  $(R,C,q) \in \Ca(\Delta_T,\R^3)$ satisfies \eqref{eqRs-m}-\eqref{eqqs-m} with $\mu(s)=f'(C(s,s))$.
By Prop. \ref{uniqueness}(a) it coincides with $(R,C,q)$ of $\bUqf(\V;\nu)$.
From \eqref{eqH-F4} and the preceding,  also 
${\sf A}_H^{[m]}=H^{[m]}$ of $\bUqf(\V;\nu^{[m]})$ 
converges when $m \to \infty$ to ${\sf A}_H[R,C,q; \bv,\nu]=H$ of $\bUqf(\V;\nu)$,  as claimed.
\end{proof}

\begin{proof}[Proof of Proposition \ref{prop:ell-lim}] We adapt the proof of \cite[Prop. 1.6]{DS21} to \purple{our} setting,
with $(C^{(\ell)},R^{(\ell)},q^{(\ell)},H^{(\ell)})$ denoting the corresponding components of $\bU_{\qs}^{f_\ell}(\V)$.
Indeed,  recall Prop. \ref{prop:nu-cont} and Remark \ref{rmk:Uf-bd} that 
$\bU_{\qs}^{f_\ell}(\V)$ exists and is the $\|\cdot\|_T$-limit
of $\bU_{\qs}^{f_\ell}(\V;\nu^{[m]})$,  as $m \to \infty$.
  In view of Prop.  \ref{prop-macro-easy} and 
our reduction of \eqref{eq:m-apx2} to \eqref{eq:to-H-o},  
the dynamic $\bU_{\qs}^{f_\ell}(\V)$ is thus the $\|\cdot\|_T$-limit 
in probability,  as $N \to \infty$ and then $m \to \infty$,  of the functions 
$(C^{[m]}_N,\chi^{[m]}_N,q^{[m]}_N,H^{[m]}_N)$ induced by the solution $\bx_t^{[m]}$ of 
\eqref{diffusion-phi} for $f=f_\ell$ and $\varphi_N=H^{\qo,[m]}_{\BJ}+\bar H^{[m]}_{\V}$.  
Further,  as \eqref{eq:exp-conf} holds for $\tau_{c/9}^{[m]}$,  some $c<\infty$ and 
all large $m,\ell$,  it follows that  
$\P( \| K^{[m]}_N -1\|_T \ge \frac{c}{3\ell}) \to 0$ as $N \to \infty$ for any $T<\infty$,  
out of which we deduce that for some $\ell_o<\infty$,  analogously to \cite[Lemma 6.1]{DS21}, 
\begin{equation}\label{62n-DS21}
\sup_{s \ge 0} \{ \big|K^{(\ell)}(s) -1 \big| \} \le \frac{c}{2\ell} \le r_1 - 1 \,,  \qquad \forall \ell \ge \ell_o \,.
\end{equation} 
Recall as in our proof of Prop.  \ref{prop:nu-cont},  that 
$(C^{(\ell)},R^{(\ell)},q^{(\ell)},H^{(\ell)})$ satisfy \eqref{eqRs-m}-\eqref{eqH-F4} 
with $\mu^{(\ell)}(s)=f_\ell'(K^{(\ell)}(s))$,  whereas $\bUsp(\V)$ 
corresponds to the same equations,  for $\mu(s)$ which is determined by the identity 
\begin{equation}
{\sf A}_K[\mu,R,C,q](s):= 
1 + 2 {\sf A}_C [\mu,R,C,q](s,s) = 0 \,, \qquad \forall s \ge 0 
\label{def:DS6.2}
\end{equation}
(where to simplify our notations,  hereafter the dependence of various quantities on $\bv$ and $\nu$ is implicit).
Armed with the bound \eqref{62n-DS21},  we proceed,  analogously to \cite[Lemma 6.2]{DS21},  to show the equi-continuity
of $(\mu^{(\ell)},R^{(\ell)},C^{(\ell)},q^{(\ell)})$ (and thereby the existence of $\|\cdot\|_T$-limit points of this collection).
To this end,  note that in view of \eqref{eqRs-m}-\eqref{eqqs-m},  the
functions $\mu_o(s):=\frac{1}{2} + \b \, \widehat{{\sf A}}_C(s,s)$ with 
$\widehat{{\sf A}}_C(s,s)$ from \eqref{def:AC-new} or \eqref{def:ACs},  are
of the form 
\begin{equation}\label{def:Ao}
\mu_o(s)=\mu_o(0)+\int_0^s {\sf A}_o[\mu,R,C,q](\theta) d\theta \,,
\end{equation}
for some continuous function ${\sf A}_o(s)$ with $\|{\sf A}_o[\mu,R,C,q]\|_T$ bounded,  uniformly over
$
\La_{\kappa,r_1}$ of \eqref{def:La-kap}.
Next,  as in our derivation of \eqref{eq:C-q-m-ubd}-\eqref{eq:R-m-ubd},  we get from \eqref{eq:fdef} 
and \eqref{62n-DS21} that for $\kappa=\kappa(c+\|f'_0\|_\star,\b^2\|\nu''\|_\star,T)$ finite 
\begin{equation}\label{eq:RCq-ell-ubd}
\|q^{(\ell)}\|_T  \le r_1\,,  \quad 
\|C^{(\ell)}\|_T \le r_1^2\,, \quad \|\mu^{(\ell)}\|_T \le c + \|f'_0\|_\star 
\quad \hbox{and} \quad \|R^{(\ell)}\|_T  \le \kappa \,,  \qquad \qquad \forall \ell \ge \ell_o \,.
\end{equation}
Namely,  $\{ (\mu^{(\ell)},R^{(\ell)},C^{(\ell)},q^{(\ell)}),  \ell \ge \ell_o\}$ is contained in 
the $\|\cdot\|_T$-closed set $\La_{\kappa,r_1}$.   
Evaluating the functional 
$\underline{\widehat{\sf A}} =({\sf A}_R, {\sf A}_C, {\sf A}_q,{\sf A}_H,{\sf A}_K, {\sf A}_o) \in \Ca(\Delta_T,\R^6)$ 
at $(\mu^{(\ell)}, R^{(\ell)},C^{(\ell)},q^{(\ell)})$ 
thus results with $\{ \underline{\widehat{\sf A}}^{(\ell)},  \ell \ge \ell_o\}$ of uniformly bounded $\|\cdot\|_T$ 
and thereby with equi-continuous and uniformly bounded $\{(R^{(\ell)},C^{(\ell)},q^{(\ell)}),  \ell \ge \ell_o \}$.
Turning to the equi-continuity of $\{\mu^{(\ell)}\}$,  we set $g_\ell(x):=2 \ell(x-1)^2+ x f'_0(x)$,  plug  
\eqref{eq:fdef} into \eqref{eqfZs},  then 
utilize \eqref{def:Ao},  the identity 
$K'(s) = {\sf A}_K(s)$ (see \eqref{eqCs-m}),   
and 
having $g_\ell(1)=f'_0(1)=\mu_o(0)$ (see \eqref{eq:f0-cnd}),  to deduce that $K^{(\ell)}$ satisfies the \abbr{ode}
\begin{align}
 K'(s) + 4 \ell (K(s)-1) &= 2 [\mu_o(s) -  g_\ell(K(s))] 
= 
2 \int_0^s \big[ {\sf A}_o(\theta) - g'_\ell(K(\theta)) {\sf A}_K(\theta) \big] d\theta \,,
 \label{eqfZs-der} 
\end{align}
\red{with $({\sf A}_o,{\sf A}_K)$ at $(\mu^{(\ell)},R^{(\ell)},C^{(\ell)},q^{(\ell)})$.  The
\abbr{rhs} of \eqref{eqfZs-der} is thus $\int_0^s \Gamma^{(\ell)}(\theta) d\theta$ for
$\Gamma^{(\ell)}:= 2 [ {\sf A}^{(\ell)}_o - g'_\ell(K^{(\ell)}) {\sf A}^{(\ell)}_K]$ and with}
$K^{(\ell)}(0)=1$ we arrive at 
\begin{equation}\label{eq:bd-Kder}
{\sf A}_K^{(\ell)} (s) = \partial_s K^{(\ell)}(s) = \int_0^s e^{-4 \ell(s-\theta)} \Gamma^{(\ell)}(\theta) d\theta \quad 
\Longrightarrow
\quad 
\|{\sf A}_K^{(\ell)}\|_T \le \frac{1}{4\ell} \|\Gamma^{(\ell)}\|_T \,.
\end{equation}
Now,  from \eqref{62n-DS21} we have that 
\begin{equation}\label{eq:h-def}
\sup_{\ell \ge \ell_o} \{ \|g'_\ell(K^{(\ell)})\|_T \} < \infty\, .
\end{equation}
Combining \eqref{eqfZs-der}-\eqref{eq:h-def} with our uniform bounds on 
$\|{\sf A}^{(\ell)}_o\|_T$,  $\|{\sf A}^{(\ell)}_K\|_T$,  we conclude that 
\begin{equation}\label{eq:K-der-ell}
\| {\sf A}_K^{(\ell)} \|_T \le \frac{\alpha}{4\ell} \,,  \qquad \qquad \qquad \forall \ell \ge \ell_o \,,
\end{equation}
for some 
$\alpha=\alpha(c,\kappa,\b,T,r_1,\|f_0\|^{(2)}_\star,\|\nu\|^{(3)}_\star,\|\bv\|^{(2)}_\star)$ finite.  Since
\[
\partial_s \mu^{(\ell)} = \partial_s f'_\ell(K^{(\ell)})=[2\ell+ f''_0(K^{(\ell)})] {\sf A}_K^{(\ell)} \,,  
\]
we deduce from \eqref{eq:K-der-ell} that $\{\| \partial_s \mu^{(\ell)}\|_T,  \ell \ge \ell_o\}$ are uniformly 
bounded,  hence $\{\mu^{(\ell)}, \ell \ge \ell_o\}$ is equi-continuous and uniformly bounded.  

To complete the proof,  fix a $\|\cdot\|_T$-limit point
\red{$(\tilde{\mu},\tilde{R},\tilde{C},\tilde{q}) \in \Ca(\Delta_T,\R^4)$}
of the pre-compact sequence
$(\mu^{(\ell)}, R^{(\ell)},C^{(\ell)},q^{(\ell)})$.  While proving
Prop. \ref{prop:nu-cont} we saw that 
the functional \red{$\underline{{\sf A}}=({\sf A}_R,{\sf A}_C,{\sf A}_q,
{\sf A}_H)$} is Lipschitz continuous \abbr{wrt} 
$(\mu,R,C,q) \in \La_{\kappa,r_1}$ equipped with the $\|\cdot\|_T$ norm \red{(which in turn then holds 
for ${\sf A}_K(s)=1+2 {\sf A}C(s,s)$)}.
The corresponding sub-sequence of $\underline{\sf A}^{(\ell)}$ converges 
to \red{$\underline{\sf A}[\tilde{\mu},\tilde{R},\tilde{C},\tilde{q}]$} in $(\Ca(\Delta_T,\R^4),\|\cdot\|_T)$ and
such $\red{(\tilde{\mu},\tilde{R},\tilde{C},\tilde{q})}$ \red{must} also satisfy \eqref{eqRs-m}-\eqref{eqqs-m}.
\red{Since $\| {\sf A}_K^{(\ell)} \|_T \to 0$ (see \eqref{eq:K-der-ell}),  necessarily ${\sf A}_K[\tilde{\mu},\tilde{R},\tilde{C},\tilde{q}] \equiv 0$ which translates to $\tilde{K} \equiv 1$ and} by the uniqueness result 
of Prop.  \ref{uniqueness}(b) all those limit points coincide.
\end{proof}

\appendix
\section{\label{sec:Appendix} Conditional mean and Hessian norm:
Proof of Lemma \ref{lem:G-conc}}

We start by collecting a few exact computations which we then utilize to prove Lemma \ref{lem:G-conc}.

\begin{lem}\label{lem:app}
Assume \eqref{eq:nudef}-\eqref{eq:r-star},  excluding $\nu$ which is pure $p$-spins,  using the
coupling of Definition \ref{def:H-m-coupling} with $\nu^{\Delta}(r):= \nu(r)-\nu^{[m]}(r)$.  For 
$\qs \in (0,1]$,  $\bxs  \in \qs \SN$ and $\bxo \in \SN$ such that
$\qo = \frac{1}{N} \langle \bxo,\bxs \rangle :=\alpha \qs$,  recall the events $\cpteq(\Ve)$ 
of Definition \ref{def:ext-cond}.  Setting $\gamma:=\nu'(\qs^2)$,  we then have the following:

\noindent
(a).  The matrix $\Sigma_{\nu}(\qo)$ of \eqref{def:w-s} is positive definite for  any
$\qo \in (-1,1)$,  with $\Sigma_{\nu}^{-1}(\qo)$ bounded and continuous in $\qo$
(\abbr{wrt} the spectral norm $\|\cdot\|_2$),  uniformly away from $|\qo|=1$.

\noindent 
(b).   Set for any $\bx \in \B^N_\star$ 
\begin{equation}\label{def:z-alt}
x := \frac{1}{N} \langle \bx, \bxs \rangle,  \quad y := \frac{1}{N} \langle \bx, \bxo \rangle \,,  \quad 
z :=  \frac{1}{\|\bxs\|} \langle \bx,\bz \rangle \,,
\end{equation}
with $\alpha x + \sqrt{1-\alpha^2}\, \qs^2 z = \qs y$.  For $\underline{\hat{\bv}}_\nu(x,y,z)$ of \eqref{def:vt-new}
and $\Ve=(\hat{V},\bu)$,  $m$,  setting $\underline{w}= \Sigma^{-1}_\nu(\qo) \hat{V}$,   we get
\begin{align}
\bar{H}_{\Ve} (\bx) :=& \E[H_{\bJ}(\bx)\,|\,  \cpteq(\Ve)] = 
\label{eq:EH-cond}
 - N \langle \underline{w}, \underline{\hat{\bv}}_\nu(x,y,z) \rangle 
 - \frac{1}{\gamma} \nu'(x)  \langle\bu M,  \bx \rangle\,, \\
(\bar{H}_{\Ve} - \widehat{H}^{[m]}_{\Ve}) (\bx) =&  \E[H^\Delta_{\bJ}(\bx)\,|\,  \cpteq(\Ve)] =
- N \langle \underline{w},\underline{\hat{\bv}}_{\nu^\Delta}(x,y,z) \rangle
 - \frac{1}{\gamma} (\nu^\Delta)'(x) \langle\bu M,  \bx \rangle\,.
 \label{eq:E-del-cond}
\end{align}
In particular,  when $\Ve = (\Ep,\Eg,\Gg,0,{\bf 0})$,  we get in \eqref{eq:EH-cond} 
the formula $\bar{H}_{\V}(\bx)=-N \bv(x,y)$ of \eqref{eq:bH-V}.  

\noindent
(c).  Represent $\bx=(x^1,\ldots,x^N)$ in the orthonormal basis $\hbxs,\bz,M$ of Definition \ref{def:ext-cond} and
form $\bar \bu \in \R^N$ by amending $\bu =(\bar u^i,  i \ge 3)$ with $\bar u^1:=\sqrt{N}\frac{\gamma}{\qs} w_3$,  
$\bar u^2:=\sqrt{N} \frac{\gamma}{\qs} w_4$.  
Setting ${\bf c}(\alpha) = \alpha [\alpha,\sqrt{1-\alpha^2}]$,  
$\rho(\bx):=\frac{\qs}{\sqrt{N}} x^1$,  $\rho_\alpha(\bx):=\frac{1}{\sqrt{N}}(\alpha x^1 + \sqrt{1-\alpha^2} x^2)$, 
in this basis, 
$\{H_{ij}(\bx)\}_{ij} := - {\rm Hess}(\bar{H}_{\Ve}) (\bx)$ is zero except for
\begin{equation}\label{def:Hess-Hbar}
\begin{aligned}
H_{1j}(\bx) = H_{j1} (\bx) =& w_1 c_{j}(\alpha) \nu''(\rho_\alpha)  \indic_{j \le 2} +
\frac{\qs}{\gamma} \nu''(\rho) \frac{\bar u^j}{\sqrt{N}} + 
\Big[ w_2 \qs^2 \nu''(\rho) + \frac{\qs^2 \nu'''(\rho)}{\gamma N} \langle \bar \bu,\bx \rangle \Big] \indic_{j=1},\\
H_{22} (\bx) =& w_1 (1-\alpha^2) \nu''(\rho_\alpha)\,.
\end{aligned}
\end{equation}
The matrix $\{H^\Delta_{ij}(\bx)\}_{ij} := {\rm Hess}(\hat{H}^{[m]}_{\Ve} - \bar{H}_{\Ve})(\bx)$ follows 
 \eqref{def:Hess-Hbar} with $\nu''$ and $\nu'''$ replaced by $(\nu^\Delta)''$ and $(\nu^\Delta)'''$. 

In case $\qs=0$,  the formulas \eqref{eq:EH-cond} and \eqref{eq:E-del-cond} simplify to
\begin{equation}\label{eq:EH-cond-0}
\bar{H}_{\Ep} (\bx) = -N \Ep \frac{\nu(y)}{\nu(1)},   \qquad \qquad (\bar{H}_{\Ep}-\widehat{H}^{[m]}_{\Ep})(\bx)= -N \Ep \frac{\nu^\Delta(y)}{\nu(1)}\,,
\end{equation}
and having $\frac{1}{\sqrt{N}} \bxo$ as second element of our basis,  yields that 
\begin{equation}\label{def:Hess-Hbar-0}
H_{ij}(\bx)=\frac{E}{\nu(1)} \nu''(\rho_0) \indic_{i=j=2}\,,  \qquad \qquad H^\Delta_{ij}(\bx)=\frac{E}{\nu(1)} (\nu^\Delta)''(\rho_0) \indic_{i=j=2}\,,
\end{equation}
or equivalently,  to set  $\alpha=0$,  $w_1=\frac{E}{\nu(1)}$ and $w_2=0$,  $\bar \bu={\bf 0}$,
in \eqref{def:Hess-Hbar}.
\end{lem}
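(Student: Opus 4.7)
The proof is a sequence of Gaussian computations based on the covariance kernel $\Cov(H_{\BJ}(\bx),H_{\BJ}(\by)) = N\nu(N^{-1}\langle\bx,\by\rangle)$ of \eqref{eq:nudef}. For part (a), direct differentiation of this kernel with respect to $\by$ at $\by=\bxs$ shows that $N\cdot\Cov(\hat{\Ha}_i,\hat{\Ha}_j)$ equals the $(i,j)$-entry of $\Sigma_\nu(\qo)$, so $\Sigma_\nu(\qo) = N\,\Cov(\hat{\Ha},\hat{\Ha})$ and positive definiteness reduces to linear independence of the four Gaussian variables $\hat{\Ha}_1,\dots,\hat{\Ha}_4$. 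Decomposing $\hat{\Ha} = \sum_p b_p\hat{\Ha}^{(p)}$ into the independent pure-degree contributions, a relation $\sum_j c_j\hat{\Ha}_j=0$ a.s. forces $\sum_j c_j\hat{\Ha}_j^{(p)}=0$ for every active $p$, and the only linear relation among the pure-$p$ components comes from Euler's identity $\langle\bxs,\nabla H^{(p)}(\bxs)\rangle = pH^{(p)}(\bxs)$, namely $\qs\hat{\Ha}_3^{(p)} = p\qs^{-1}\hat{\Ha}_2^{(p)}$; intersecting the per-$p$ null spaces over two distinct active degrees $p$ then yields only $c=0$, giving positive definiteness. Smoothness of $\Sigma_\nu(\qo)$ in $\qo$ and strict positivity of $\det\Sigma_\nu(\qo)$ on $\{|\qo|\le 1-\delta\}$ for each $\delta>0$ then deliver the stated continuity and uniform bounds on $\Sigma_\nu^{-1}$.

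For part (b), the key observation is that $\hat{\Ha}$ and $\Ha_\perp$ are independent Gaussian blocks: with the basis of Definition \ref{def:ext-cond}, the rows of $M$ are orthogonal to $\sp\{\bxo,\bxs\}$, and the second-derivative formula
\[
\Cov\big(\partial_{\bu_1}H_{\BJ}(\bxs),\partial_{\bu_2}H_{\BJ}(\by)\big) = \nu'(\cdot)\langle\bu_1,\bu_2\rangle + N^{-1}\nu''(\cdot)\langle\by,\bu_1\rangle\langle\bxs,\bu_2\rangle
\]
makes all cross-covariances between $\hat{\Ha}$-entries and $\Ha_\perp$-entries vanish, while $\Cov(\Ha_\perp,\Ha_\perp) = \gamma I_{N-2}$. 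The conditional mean of $H_{\BJ}(\bx)$ given $\Ha=\Ve$ thus splits into the sum of two independent Gaussian conditional means. Computing the covariance vectors $\Cov(H_{\BJ}(\bx),\hat{\Ha}) = -\underline{\hat{\bv}}_\nu(x,y,z)$ and $\Cov(H_{\BJ}(\bx),\Ha_\perp) = -\nu'(x)M\bx$ directly from the kernel, and substituting $\Sigma_{\hat\Ha}^{-1} = N\Sigma_\nu^{-1}(\qo)$ and $\Sigma_{\Ha_\perp}^{-1} = \gamma^{-1}I$, produces \eqref{eq:EH-cond} with $\underline{w}=\Sigma_\nu^{-1}(\qo)\hat V$. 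The reduction to \eqref{eq:bH-V} when $\bu={\bf 0}$ follows because the fourth row of \eqref{def:w-s} fixes $w_4$ precisely so that $\langle\underline{w},\underline{\hat{\bv}}_\nu\rangle = \bv(x,y)$ of \eqref{def:vt-new}. For \eqref{eq:E-del-cond}, since Definition \ref{def:H-m-coupling} makes $H^\Delta_\BJ$ independent of $H^{[m]}_\BJ$, one has $\Cov(H^\Delta_\BJ(\bx),\Ha) = \Cov(H^\Delta_\BJ(\bx),\Ha^\Delta)$, and the above computation repeats with $\nu$ replaced by $\nu^\Delta$ in the covariance vector while $\Sigma_\Ha$ is unchanged. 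The $\qs=0$ case is the one-dimensional specialization.

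For part (c), I express $\bar H_\Ve(\bx)$ in the orthonormal basis $(\hbxs,\bz,M_1,\dots,M_{N-2})$ via the coordinate identities $x=\rho$, $y=\rho_\alpha$, $z=x^2/(\qs\sqrt N)$, and $\langle\bu M,\bx\rangle = \sum_{j\ge 3}\bar u^j x^j$. Substituting the amended values $\bar u^1=\sqrt N\gamma w_3/\qs$ and $\bar u^2=\sqrt N\gamma w_4/\qs$ yields the consolidated form
\[
 -\bar H_\Ve(\bx) = Nw_1\nu(\rho_\alpha)+Nw_2\nu(\rho)+\gamma^{-1}\nu'(\rho)\langle\bar\bu,\bx\rangle,
\]
after checking that $Nw_3\qs^{-2}\rho\nu'(\rho) = \gamma^{-1}\bar u^1 x^1\nu'(\rho)$ and $Nw_4 z\nu'(\rho) = \gamma^{-1}\bar u^2 x^2\nu'(\rho)$. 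Taking second derivatives with respect to $x^i,x^j$ via the product and chain rules, using $\partial_k\rho=(\qs/\sqrt N)\indic_{k=1}$ and $\partial_k\rho_\alpha = c_k^0/\sqrt N$ (with $c_1^0=\alpha$, $c_2^0=\sqrt{1-\alpha^2}$), then produces the entries of \eqref{def:Hess-Hbar}: the first term yields the $w_1 c_j(\alpha)\nu''(\rho_\alpha)$ part active only at $i,j\le 2$, the second yields the $w_2\qs^2\nu''(\rho)\indic_{i=j=1}$ part, and the mixed term $\gamma^{-1}\nu'(\rho)\langle\bar\bu,\bx\rangle$ produces the $\bar u^j/\sqrt N$ and $\nu'''(\rho)\langle\bar\bu,\bx\rangle/N$ contributions. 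The companion formula for $H^\Delta_{ij}$ is the same calculation with $\nu$ replaced by $\nu^\Delta$, since $\underline{w}$ and $\bar\bu$ are unchanged. The $\qs=0$ formulas \eqref{def:Hess-Hbar-0} are immediate from directly differentiating \eqref{eq:EH-cond-0}.

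The main substantive point is the linear-independence argument for part (a), where one must carefully leverage the independence of the $H^{(p)}$ across $p$ together with Euler's identity to exclude degeneracies; the bookkeeping in part (c) to consolidate the $w_3,w_4$ terms into the form involving $\bar\bu$ is routine but demands care to correctly track the cross terms arising from twice differentiating $\nu'(\rho)\langle\bar\bu,\bx\rangle$ in the $x^1$ direction.
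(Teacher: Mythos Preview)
Your proof tracks the paper's exactly: in (a) you interpret $\Sigma_\nu(\qo)$ as $N\,\Cov(\hat{\Ha},\hat{\Ha})$ and reduce positive definiteness to linear independence of the four Gaussian variables; in (b) you use the independence of the blocks $\hat{\Ha}$ and $\Ha_\perp$ together with the standard Gaussian regression formula and the explicit covariance vectors; in (c) you consolidate $-\bar H_{\Ve}$ into $Nw_1\nu(\rho_\alpha)+Nw_2\nu(\rho)+\gamma^{-1}\nu'(\rho)\langle\bar{\bu},\bx\rangle$ and differentiate. Your per-$p$ decomposition in (a) is in fact more explicit than the paper's one-line assertion of linear independence. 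One small caution: the claim that the only per-$p$ linear relation is Euler's identity tacitly uses $|\qo|<\qs$, since at $|\qo|=\qs<1$ (where $\bxo$ is parallel to $\bxs$) homogeneity of $H^{(p)}$ adds the further relation $\hat{\Ha}_1^{(p)}=(\pm\qs)^{-p}\hat{\Ha}_2^{(p)}$, making each per-$p$ null space two-dimensional, and intersecting over merely two active degrees no longer forces $c=0$ --- but the paper's bare assertion does not address this edge case either.
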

\begin{proof} Since $\bz \perp \bxs$ with $\sp\{\bxo,\bxs\} \subseteq \sp\{\bz,\bxs\}$ and $\langle \bxo,\bz \rangle \ge 0$,  
by elementary geometry  
\begin{equation}\label{def:v-formula}
\alpha \hbxs + \sqrt{1-\alpha^2} \bz =   \frac{1}{\sqrt{N}} \bxo  \,,
\end{equation}
which yields the stated identity $\alpha x + \sqrt{1-\alpha^2}\,  \qs^2 z = \qs y$ between the terms of \eqref{def:z-alt}.
When $|\alpha|<1$,  inverting this identity yields $z=z(x,y)$ as in \eqref{def:vt-new},  which when substituted 
in \eqref{eq:EH-cond} together with $\bu={\bf 0}$ and $G=0$ results with formula \eqref{eq:bH-V} 
for $\bar{H}_{\V}(\bx)$ (for $|\alpha|=1$ and $G=0$ we get $w_4=0$ in \eqref{def:w-s},  hence $z(x,y)$ irrelevant).
Next recall that $\cpteq(\Ve)
= \{\hat{\Ha}=\hat{V},   \Ha_\perp = \bu \}$ for
$\Ha_{\perp} = - M \nabla H_{\BJ}(\bxs) \in \R^{N-2}$
and $\hat{\Ha} \in \R^4$ of \eqref{def:calH-c}.  With $M$ forming an orthonormal basis of $\sp\{\bz,\bxs\}^\perp$,  
it follows from \cite[(3.35)]{DS21} that $\hat{\Ha}$ and $\Ha_\perp$ are independent,  
and that for any $\bx \in \B^N_\star$
\begin{align}\label{eq:Hat-H-cov}
N  \E\big[ \hat{\Ha} \hat{\Ha}^{\top}\big] = \Sigma_\nu(\qo) \, ,\qquad \qquad \quad
\E\big[H_{\bJ}(\bx) \hat{\Ha}\big] &= - \underline{\hat{\bv}}_\nu (x,y,z)^\top,\\
\E \big[\Ha_\perp \Ha_\perp^\top \big] = \nu'(\qs^2) {\bf I}_{N-2} \,,  \qquad\;\;  \E\big[H_{\BJ}(\bx) \Ha_\perp \big] &= - \nu'(x) M \bx \,.
\label{eq:Grad-0-cov}
\end{align}
(a).  From \eqref{eq:Hat-H-cov} we deduce that the
matrix $\Sigma_\nu(\qo)$ is positive definite,  hence of bounded inverse,  whenever the four random 
variables $\{H_{\BJ}(\bxo), H_{\BJ}(\bxs), \partial_{\hbxs} H_{\BJ}(\bxs),\,\partial_{\bz} H_{\BJ}(\bxs)\}$ 
are linearly independent.  Excluding pure $p$-spins,  such linear independence applies whenever 
$\{\bxo,\pm \bxs\}$ are distinct points,  namely,  for any $\qo \in (-1,1)$.  With $\nu(\cdot)$ 
real analytic up to $\bar r^2>1$ all entries of $\Sigma_\nu(\qo)$ are uniformly bounded and continuous in $\qo$.
Further,  for $|\qo|<1$ the eigenvalues of $\Sigma^{-1}_\nu(\qo)$ are positive,  with $\|\Sigma^{-1}_\nu(\qo)\|_2<\infty$ 
and all entries of $\Sigma_\nu(\qo)^{-1}$ continuous in $\qo$.  Clearly,  then
$\| \Sigma^{-1}_\nu(\qop)- \Sigma^{-1}_\nu(\qo)\|_2 \to 0$ as $\qop \to \qo$,  
uniformly on compacts (i.e.  away from $|\qo|=1$).\\
(b).  By the well-known formula for conditional Gaussian distributions (see \cite[Pages 10-11]{AT}),
\begin{align*}
\E[H_{\bJ}(\bx)\,|\,  \cpteq(\Ve)] &= \E[H_{\BJ}(\bx) \,|\,  \hat{\Ha}=\hat{V}] + \E[H_{\BJ}(\bx)\,|\,  
\Ha_\perp={\bf u}\,]
\nonumber \\
&= \hat{V} \,  \E\big[ \hat{\Ha} \hat{\Ha}^{\top}\big]^{-1}  \E\big[H_{\bJ}(\bx) \hat{\Ha}\big] 
+  {\bf u} \,  \E \big[\Ha_\perp \Ha_\perp^\top \big]^{-1} \E\big[ H_{\BJ}(\bx) \Ha_\perp \big] \,.
\end{align*}
Plugging the expressions of \eqref{eq:Hat-H-cov}-\eqref{eq:Grad-0-cov} in the preceding formula 
results with \eqref{eq:EH-cond}.  Turning to \eqref{eq:E-del-cond} recall that the random process 
$H_{\BJ}^{\Delta}=H^{[\infty]}_{\BJ}-H_{\BJ}^{[m]}$ which corresponds to the mixture $\nu^\Delta$,  
is independent of $H_{\BJ}^{[m]}$.  With $\hat{\Ha}=\hat{\Ha}^{[m]}+\hat{\Ha}^\Delta$
and $\Ha_\perp=\Ha^{[m]}_\perp+\Ha^\Delta_\perp$,  we deduce that similarly to the \abbr{rhs} of 
\eqref{eq:Hat-H-cov}-\eqref{eq:Grad-0-cov},  
\begin{align}\label{eq:Hat-Delta-cov}
\E\big[H^\Delta_{\bJ}(\bx) \hat{\Ha}\big] &= \E\big[H^\Delta_{\bJ}(\bx) \hat{\Ha}^\Delta \big] =
- \underline{\hat{\bv}}_{\nu^\Delta} (x,y,z)^\top,\\
\E\big[H^\Delta_{\BJ}(\bx) \Ha_\perp \big] &= \E\big[H^\Delta_{\bJ}(\bx) \Ha_\perp^\Delta \big] =
- (\nu^\Delta)'(x) M \bx \,.
\label{eq:Grad-Delta-cov}
\end{align}
We thus arrive at  \eqref{eq:E-del-cond} upon plugging \eqref{eq:Hat-Delta-cov}-\eqref{eq:Grad-Delta-cov} 
in the formula
\[
E[H^\Delta_{\bJ}(\bx)\,|\,   \hat{\Ha}=\hat{V},  \Ha_\perp={\bf u} \,] 
= \hat{V} \,  \E\big[ \hat{\Ha} \hat{\Ha}^{\top}\big]^{-1}  \E\big[H^\Delta_{\bJ}(\bx) \hat{\Ha}\big] 
+  {\bf u} \,  \E \big[\Ha_\perp \Ha_\perp^\top \big]^{-1} \,  \E\big[ H^\Delta_{\BJ}(\bx) \Ha_\perp \big]\,.
\]
(c).  We express $\bx \in \B^N_\star$ in the basis $(\hbxs,\bz,M)$ of Definition \ref{def:ext-cond} to get by   
\eqref{def:v-formula},  that $(x,y,z)$ of \eqref{def:z-alt} are then
\begin{equation}\label{eq:xyz-conv-base}
x= \rho(\bx) = \frac{\qs}{\sqrt{N}} x^1 \,,  \qquad y = \rho_\alpha(\bx) = \frac{1}{\sqrt{N}}  \big(\alpha x^1 + \sqrt{1-\alpha^2}\, x^2\big) \,,\qquad z = \frac{1}{\qs \sqrt{N}} x^2 \,.
\end{equation} 
Likewise,  for $\bu=(u^3, \ldots,u^N) \in \R^{N-2}$ we have in this basis that $\langle \bu M,\bx \rangle = \sum_{i \ge 3} u^i x^i$.  Substituting \eqref{eq:xyz-conv-base} as the arguments for $\underline{\hat{\bv}}_\nu(\cdot)$ of \eqref{def:vt-new} 
converts \eqref{eq:EH-cond} into 
\begin{equation}\label{eq:EH-alt}
\bar H_{\Ve}(\bx) = - \Big[ w_1 N \nu (\rho_\alpha)+  w_2  N \nu (\rho) +
\frac{1}{\gamma} \nu'(\rho) \langle \bar \bu, \bx \rangle \Big]\,,
\end{equation}
for $\bar \bu$ as stated.  The function in \eqref{eq:EH-alt} is linear in $(x^3,\ldots,x^N)$ 
with no interaction between those coordinates and $x^2$,  hence the only non-zero terms 
of its Hessian be at $i=1$ or $j=1$ or $i=j=2$ and it is not hard to verify the expressions in 
\eqref{def:Hess-Hbar}.  The formula in this basis for the \abbr{rhs} of \eqref{eq:E-del-cond} be
the same as \eqref{eq:EH-alt} except with $\nu^\Delta$ instead of $\nu$,  with the same 
modification then between $(H_{ij}(\bx))$ and $(H_{ij}^\Delta(\bx))$.

When $\qs=0$ we condition only on $\hat{\Ha}_1$ of variance $N^{-1} \nu(1)$
and with $\cpteo(\Ve)=\cpto(E)$,  the derivation of the \abbr{lhs} of \eqref{eq:EH-cond-0} is a special case
of that of \eqref{eq:EH-cond}.  Its \abbr{rhs} 
follows similarly,  using (only) the first coordinate of \eqref{eq:Hat-Delta-cov}.
Having $\frac{1}{\sqrt{N}} \bxo$ as the second vector of the basis,  converts \eqref{eq:EH-cond-0} 
to a special case of \eqref{eq:EH-alt} corresponding to $\alpha=0$,  
$w_1=\frac{E}{\nu(1)}$,  $w_2=0$ and $\bar \bu = {\bf 0}$,
and we read its Hessian formula \eqref{def:Hess-Hbar-0},  as a special case of  \eqref{def:Hess-Hbar}.
\end{proof}

\begin{proof}[Proof of Lemma \ref{lem:G-conc}] 
(a).  Applying  \cite[(C.8) at $k=2$]{BSZ} for the field $r_\star^{-2} H_{\BJ}(r_\star \bx)$,  we see that 
if $\nu(\cdot)$ satisfies \eqref{eq:r-star} with $\bar r>r_\star$,  then for some $\kappa=\kappa(\nu)<\infty$ and $c>0$,   
\begin{equation}\label{eq:BSZ-C8}
 \P\big(\|{\rm Hess}(H_{\BJ}) \|_\infty \ge \kappa \, \big) \le e^{-c N} 
\end{equation}
(see also  \cite[(7.17)]{BSZ} for a relevant definition).  In case $\qs>0$,  using the notations of Definition \ref{def:ext-cond},  
the field $H_{\BJ}^{\qo}$ corresponds to conditioning on $\Ha := (\hat{\Ha},\Ha_\perp)={\bf 0}$,
where $\hat{\Ha} \in \R^4$ is independent of $\Ha_\perp \in \R^{N-2}$.  Hence, 
\[
\bar H_{\Ha} := \E[H_{\BJ} | \Ha ] = \E[H_{\BJ} | \hat{\Ha}] + \E[H_{\BJ}|\Ha_\perp] 
 := \bar H_{\hat{\Ha}} + \bar H_{\Ha_\perp}
\]
(see Lemma \ref{lem:app}(b)),  and we have the identity in law 
\begin{equation}\label{eq:HJ-ident}
H_{\BJ} = H_\BJ^{\qo} + \bar{H}_{\hat{\Ha}} + \bar{H}_{\Ha_\perp}
\end{equation}
where the fields on the \abbr{rhs} are mutually independent.  
Such decomposition as sum of independent fields applies for the 
Gaussian vectors $\{{\rm Hess} (H_{\BJ})(\bx_i),  i \le \ell\}$.  As the 
set of all $\ell$-tuples of matrices 
with $\max_{i} \{ \|{\bf H}(\bx_i)\|_2 \} \le \kappa$ is convex and symmetric,  we get from \eqref{eq:BSZ-C8}
by Anderson's inequality (see \cite[Corollary 3]{Anderson}),  that
\begin{equation}\label{eq:finite-And}
\P\big(\max_{i \le \ell} \{ \|{\rm Hess}(H^{\qo}_{\BJ})(\bx_i)\|_2 \} \ge \kappa \, \big) \le 
\P\big(\|{\rm Hess}(H_{\BJ}) \|_\infty \ge \kappa \, \big) \le e^{-c N} 
\end{equation}
and the \abbr{lhs} of \eqref{Grad-Lip-conc} follows by monotone convergence (with $H_{\BJ}^{\qo} \in C^2_0(\B^N)$, 
taking centers $\{\bx_i,  i \le \ell_k\} \subset \B^N_\star$ of nested dyadic partitions 
of $[-r_\star,r_\star]^N$,  the variable on the \abbr{lhs} of \eqref{eq:finite-And} converges as $k \uparrow \infty$
to $\|{\rm Hess}(H^{\qo}_{\BJ})\|_\infty$).  In case $\qs=0$ the field $H_{\BJ}^{\qo}$ stands for $H_{\BJ}$
conditioned on $\hat{\Ha}_1=0$ and we thus replace $\Ha$ in \eqref{eq:HJ-ident} by $\hat{\Ha}_1$.  Once this 
has been done,  the same argument via Anderson's inequality yields again the \abbr{lhs} of \eqref{Grad-Lip-conc}.
In both cases,  precisely the same reasoning applies for ${\rm Hess}(H^{\qo}_{\BJ}-\hat{H}^{{\qo},[m]}_{\BJ})$,  and it is easy 
to verify that the value of $\kappa_m$ in \cite[(C.8)]{BSZ} for the field on $\B^N(\sqrt{N})$ 
with mixture $r_\star^{-4} \nu^{\Delta}(r_\star^2 \cdot)$,
indeed goes to zero as $m \to \infty$.

Turning to the \abbr{rhs} of \eqref{Grad-Lip-conc},  we are free to choose 
the basis of $\R^N$ as in Lemma \ref{lem:app}(c) (the value of 
$\|{\rm Hess}(\cdot)\|_\infty$ is invariant to the choice of basis
since Hessian matrices for different 
bases are conjugate,  hence having the same spectral norm,  while 
$\B^N_\star$ is invariant to the choice of basis).  Having done so,  as $r_\star<\bar r$, 
\begin{equation}\label{def:S}
S_\nu :=\sup_{r \le r_\star} \{ |\nu''(r)| \vee |\nu'''(r)| \} < \infty 
\end{equation}
and with $|\rho| \vee |\rho_\alpha| \le \frac{\|\bx\|}{\sqrt{N}}$,  it follows from \eqref{def:Hess-Hbar}, 
by Cauchy-Schwarz and the bounds $\|\bx\| \le r_\star \sqrt{N}$,  $\qs \le 1$,  $\|{\bf c}(\alpha)\| = |\alpha| \le 1$,
that for $\qs \in (0,1]$,  uniformly over $\bx \in \B^N_\star$
\begin{align}
\|{\bf H}(\bx)\|^2_{2} \le \|{\bf H}(\bx)\|^2_{\rm F} = \sum_{i,j=1}^N  H_{i,j}(\bx)^2 
& \le 4 (1+r_\star^2) \Big[  \frac{1}{\gamma^2 N} \|\bu\|^2  +  \|\underline{w}\|^2 \Big] S_\nu^2 \,.
\label{eq:S-bd}
\end{align}
As we saw in deriving \eqref{def:Hess-Hbar-0},  this also applies for $\qs=0$ upon setting 
$w_2=\frac{E}{\nu(1)}$ and all other terms being zero.
We also similarly arrive at the bound
\begin{equation}\label{eq:S-del-bd}
\|{\bf H}^\Delta\|_\infty^2 \le 4 (1+r_\star^2) \Big[  \frac{1}{\gamma^2 N} \|\bu\|^2  +  \|\underline{w}\|^2 \Big] (S_{\nu^\Delta})^2 \,.
\end{equation}
Next,  recall from Lemma \ref{lem:app}(a) that for $\qs>0$ and any $\delta>0$,  
\begin{equation}\label{def:C-delta}
C_{\nu,\delta} := \sup\{\|\Sigma^{-1}_{\nu}(\qo) \|_2 : |\qo| \le 1-\delta\} < \infty \,.
\end{equation}
From \eqref{eq:bar-B} we know that $\bu={\bf 0}$ throughout $\vB_\delta(\V)$,
while in case $\qs>0$,  if $1-|\qo| \ge 2 \delta$,  then by \eqref{def:C-delta} also, 
\begin{equation}\label{eq:bd-w-ball}
\| \underline{w} \| = \| \Sigma^{-1}_\nu(\qop) \hat{V} \|
\le \|\Sigma^{-1}_{\nu}(\qop) \|_2 \|\hat{V}\| \le C_{\nu,\delta} (\|(\Ep,\Eg,\Gg)\| + \sqrt{3}\,  \delta) 
\end{equation}
(for $\qs=0$,  merely set $C=\frac{1}{\nu(1)}$).
Combining \eqref{eq:S-bd} and \eqref{eq:bd-w-ball} establishes the \abbr{rhs} of \eqref{Grad-Lip-conc} 
(say,  for $\kappa = 2 (1+r_\star) C_{\nu,\delta} (\|(\Ep,\Eg,\Gg)\| + \sqrt{3} \,  \delta) S_\nu$).
Moreover,  considering \eqref{eq:S-del-bd} instead of \eqref{eq:S-bd} and noting that 
$S_{\nu^\Delta} \to 0$ as $m \to \infty$,  we conclude that for any $\delta \le (1 - |\qo|)/2$,  and some 
$\kappa_m=\kappa_m(\delta) \to 0$,  as claimed
\[
\sup_N \sup_{\vB_\delta(\V)} \{ \|{\rm Hess} (\bar H_{\V'}-\hat{H}^{[m]}_{\V'}) \|_\infty \} \le \kappa_m \,.
\]
(b).  Turning to \eqref{eq:Grad-CE-cont},  assume \abbr{wlog} that $\epsilon \le \delta := \frac{1}{2}(1-|\qo|)$
and consider first ${\bf H}_{\qop}:={\rm Hess}(\bar H_{\Ve'} - \bar H_{\V'})$ with 
$\V'=(\Ep,\Eg,\Gg,\qop)$.  It follows by the linearity of the Hessian and \eqref{eq:S-bd} that 
\begin{align*}
\|{\bf H}_{\qop}\|_\infty^2 &\le 4 (1+r_\star^2) \Big[  \frac{1}{\gamma^2 N} \|\bu'\|^2  +  
\|\underline{\Delta w}\|^2 \Big] S_\nu^2 \,,
\end{align*}
while by \eqref{def:vBp} we have that $\|\bu'\| \le \epsilon \sqrt{N}$ and if $\qs>0$,  then similarly 
to \eqref{eq:bd-w-ball},  
\begin{align*}
\| \underline{\Delta w} \| = \|  \Sigma^{-1}_{\nu}(\qop) [\Ep'-\Ep,\Eg'-\Eg,\Gg'-\Gg,G']^{\top} \| 
\le 2 \epsilon C_{\nu,\delta} 
\end{align*}
(with the same conclusion for $\qs=0$,  see after \eqref{eq:bd-w-ball}).
Combining the last three inequalities,  we conclude that 
$\|{\bf H}_{\qop}\|_\infty \le 2 (1+r_\star) (\gamma^{-1} + 2 C_{\nu,\delta}) \epsilon S_\nu$.  
If $\qs=0$ we are done,  while otherwise,  set $\epsilon'=\frac{\epsilon}{\qs}$ and 
consider $\widetilde{{\bf H}}:={\rm Hess}(\bar H_{\V'} - \bar H_{\V})$.  The only 
difference between the fields is that $\qop \in [\qo-\epsilon,\qo+\epsilon] \cap [-\qs,\qs]$
in $\V'$,  hence $\alpha'=\frac{\qop}{\qs} \in [\alpha-\epsilon',\alpha+\epsilon'] \cap [-1,1]$ 
in the corresponding Hessian entries of \eqref{def:Hess-Hbar}.  Writing $\rho=\qs \rho^1$ and 
$\rho_\alpha:=\alpha \rho^1 + \sqrt{1-\alpha^2} \, \rho^2$ with 
$\rho^i (\bx) = \frac{x^i}{\sqrt{N}} \in [-r_\star,r_\star]$,  we utilize again the linearity of the 
Hessian,  to deduce from \eqref{def:Hess-Hbar} that $\widetilde{{\bf H}}$ is a
\emph{two dimensional} matrix (as ${\bf u}={\bf 0}$ in both fields),  with entries of the form
\[
H_{ij}(\rho^1,\rho^2,\alpha',\underline{w}(\qop))-H_{ij}(\rho^1,\rho^2,\alpha,\underline{w}(\qo))\,,   \qquad 
1 \le i \le j \le 2 \,,
\]
for some continuous $H_{ij} : [-r_\star,r_\star]^2 \times [-1,1] \times \R^4 \to \R$,
independent of $N$.  Moreover,  by Lemma \ref{lem:app}(a),  
\[
\| \underline{w}(\qop)-\underline{w}(\qo) \| \le 
\| \Sigma^{-1}_\nu(\qop) - \Sigma^{-1}_\nu(\qo) \|_2\,  \| (\Ep,\Eg,\Gg) \| \to 0,  \quad \hbox{as} \quad \epsilon \to 0\,, 
\]
uniformly over $|\qop-\qo| \le \epsilon$,  $|\qop| \le \qs$.  It thus follows that
$\sup\{ \|\widetilde{\bf H}\|_\infty : |\qop-\qo| \le \epsilon,  |\qop| \le \qs  \} \to 0$ as $\epsilon \to 0$
(uniformly in $N$).  To get \eqref{eq:Grad-CE-cont},  simply note that by the triangle inequality,  
\[
 \| {\rm Hess}(\bar H_{\Ve'} - \bar H_{\V})\|_\infty \le \|{\bf H}_{\qop}\|_\infty + \|\widetilde{\bf H}\|_\infty \,.
\]
(c).  For $\qs>0$,  condition both sides of \eqref{eq:HJ-ident} on $\{\Ha_\perp={\bf 0}\}$ to cancel the right most term and
get the coupling
\begin{equation}\label{eq:HJ-perp}
H_{\BJ}^\perp = H_\BJ^{\qo} + \bar{H}_{\hat{\Ha}} \,,
\end{equation}
where the fields on the \abbr{rhs} are independent of each other,  and $H_{\BJ}^\perp$ stands
for $H_{\BJ}$ conditioned only on $\Ha_\perp={\bf 0}$.  With $\HJs$ denoting
the field $H_{\BJ}$ conditioned on $(\hat{H}_4,\Ha_\perp)={\bf 0}$,  
taking $\hat{\Ha}_4$ and $H_{\BJ}^\star$ instead of $\hat{\Ha}$ and $H_{\BJ}^{\qo}$,  respectively,  
leads by the same reasoning to the coupling
\begin{equation}\label{eq:HJ-perp2}
H_{\BJ}^\perp = \HJs + \bar{H}_{\hat{\Ha}_4} 
\end{equation}
(with $\HJs$ independent of $\hat{\Ha}_4$).  From \eqref{eq:HJ-perp}-\eqref{eq:HJ-perp2}
we deduce by the triangle inequality for $\|{\rm Hess} (\cdot)\|_\infty$ and the union bound,  the 
existence of a coupling of $(H_{\BJ}^{\qo},H_{\BJ}^\star)$ such that 
\begin{equation}\label{eq:HO-to-Ho}
\P\big( \|{\rm Hess}(H^{\qo}_{\BJ}-\HJs)\|_\infty \ge 2\delta\big) \le 
\P\big( \|{\rm Hess}(\bar H_{\hat{\Ha}})\|_\infty \ge \delta\big) +
\P\big( \|{\rm Hess}(\bar H_{\hat{\Ha}_4})\|_\infty \ge \delta\big) \,.
\end{equation}
It thus suffices for \eqref{Grad-Lip-o} to establish the $e^{-\Omega(N)}$-decay of each of the terms on the \abbr{rhs}
of \eqref{eq:HO-to-Ho},  uniformly away from $|\qo|=1$.  To this end,  following the derivation of \eqref{eq:EH-cond},  
we find that 
\begin{align*}
\bar{H}_{\hat{\Ha}} = - N \langle \underline{w},  \underline{\hat{\bv}}_\nu(x,y,z) \rangle \quad \hbox{and} \quad
\bar{H}_{\hat{\Ha}_4} = - N \tilde{w}_4 z \nu'(x) \,,
\end{align*}
for the random variables $\underline{w} := \Sigma_\nu^{-1}(\qo) \hat{\Ha}$
and $\tilde{w}_4 := \frac{\qs^2}{\nu'(\qs^2)} \hat{\Ha}_4$.  Combining \eqref{eq:S-bd} (at $\bu={\bf 0}$),
with the left inequality of \eqref{eq:bd-w-ball},  we deduce that 
$ \|{\rm Hess}(\bar H_{\hat{\Ha}})\|_\infty \le c\,  \|\hat{\Ha}\|$ for some $c=c(\delta,\nu)<\infty$ and  
by the same reasoning also $ \|{\rm Hess}(\bar H_{\hat{\Ha}_4})\|_\infty \le c \,  |\hat{\Ha}_4|$.
The \abbr{rhs} of \eqref{eq:HO-to-Ho} is thus further bounded by
\begin{equation}\label{eq:temp}
\P\Big(  \|\hat{\Ha}\| \ge \frac{\delta}{c} \Big) +
\P\Big( |\hat{\Ha}_4| \ge \frac{\delta}{c} \Big) \le 2 \sum_{i=1}^4 \P\Big( |\hat{\Ha}_i| \ge \frac{\delta}{2c} \Big) \,.
\end{equation}
Recall from the left-side of \eqref{eq:Hat-H-cov},  that the centered Gaussian variables $\{ \sqrt{N} \,  \hat{\Ha}_i\}$ have 
uniformly bounded variances.
Consequently,  the terms on the \abbr{rhs} of \eqref{eq:temp} decay 
exponentially in $N$,  establishing \eqref{Grad-Lip-o}.   

Finally,  if $\qs=0$ then $\bxs={\bf 0}$,  so $H_{\BJ}^\star=H_{\BJ}$.  As mentioned before,  in this case  
$H_{\BJ}^{\qo}$ is merely $H_{\BJ}$ given $\{\hat{\Ha}_1=0\}$,  yielding the 
explicit coupling $H_{\BJ}^\star=H_{\BJ}^{\qo} + \bar H_{\hat{\Ha}_1}$.  Analogously to \eqref{eq:HO-to-Ho} 
we now control \eqref{Grad-Lip-o} by 
\[
\P\big(\|{\rm Hess}(\bar H_{\hat{\Ha}_1})\|_\infty \ge 2 \delta\big) \le \P\Big(|\hat{\Ha}_1| \ge \frac{2\delta}{c}\Big)\,,
\]
where by \eqref{def:Hess-Hbar-0} we set $c=\frac{S_\nu}{\nu(1)}<\infty$,  and conclude as before.
\end{proof}

\end{document}